\newtheorem{Theorem}{Theorem}[section]
\newtheorem{Definition}{Definition}[section]
\newtheorem{Proposition}{Proposition}[section]
\newtheorem{Assumption}{Assumption}[section]
\newtheorem{Lemma}{Lemma}[section]
\newtheorem{Remark}{Remark}[section]
\newtheorem{Example}{Example}[section]
\numberwithin{equation}{section}
\def \F{I\!\!F}
\def \H{I\!\!H}
\def \N{I\!\!N}
\def \R{I\!\!R}
\def\Fc{{\cal F}}
\def\L{{\cal L}}
\def\T{{\cal T}}
\def\Dzw1#1{\frac{\partial^2 #1}{\partial z \partial w_1}}
\def\Dzb1#1{\frac{\partial^2 #1}{\partial z \partial b_1}}
\newcommand{\dproof}{\noindent {Proof.} \quad}
\newcommand{\fproof}{\hfill $\square$ \bigskip}
\def\RB{\mathbb{R}}
\def\BC{\mathcal{B}}
\def\PC{\mathcal{P}}
\def\R{{\bf R}}
\def\1B{\text{1\!\!I}}
\def\tN{\tilde{N}}
\def\RB{\mathbb{R}}
\def\PC{\mathcal{P}}
\def\R{{\bf R}}
\def\1B{\text{1\!\!I}}
\def\tN{\tilde{N}}
\def\RB{\mathbb{R}}
\newcommand{\Q}{\mathbb{Q}}
\newcommand{\cf}{\mathcal{F}}
\newcommand{\ce}{\mathcal{E}}
\newcommand{\ct}{\mathcal{T}}
\newcommand{\stopt}{\mathcal{T}_{t,T}}
\newcommand{\stops}{\mathcal{T}_{S,T}}
\newcommand{\stopo}{\mathcal{T}_{0,T}}
\DeclareMathOperator*{\esssup}{ess\,sup}
\DeclareMathOperator{\e}{e}
\newcommand{\vertiii}[1]{{\left\vert\kern-0.25ex\left\vert\kern-0.25ex\left\vert #1 
    \right\vert\kern-0.25ex\right\vert\kern-0.25ex\right\vert}}
\newcommand{\vvertiii}[1]{{\vert\kern-0.25ex\vert\kern-0.25ex\vert #1 
    \vert\kern-0.25ex\vert\kern-0.25ex\vert}}
\newcommand{\triple}{\vert\kern-0.25ex\vert\kern-0.25ex\vert}
\newcommand{\BOX}{\ensuremath\Box}
\newenvironment{proof}{{\vskip\baselineskip\noindent\textbf{Proof:}}}%
{\hspace*{.1pt}\hspace*{\fill}\BOX\vskip\baselineskip}
\newcommand{\limsupn}{\limsup_{n\rightarrow\infty}}
\newcommand{\Ref}{{\mathcal{R}}ef}
\DeclareMathAlphabet{\mathpzc}{OT1}{pzc}{m}{it}
\begin{document}
\begin{frontmatter}


\title{Optimal stopping with $\lowercase \mathpzc{f}$-expectations: the irregular case} 
\runtitle{Optimal stopping with $\lowercase{f}$-expectations:  the irregular case}

\begin{aug}

\author{\fnms{Miryana} \snm{Grigorova}\thanksref{m1}
\ead[label=e1]{	
miryana\_grigorova@yahoo.fr}},
\author{\fnms{Peter} \snm{Imkeller}\thanksref{m2}\ead[label=e2]{imkeller@math.hu-berlin.de}},
\author{\fnms{Youssef} \snm{Ouknine}\thanksref{m3},
\ead[label=e3]{ouknine@ucam.ac.ma}
\ead[label=u1,url]{}}
\and
\author{\fnms{Marie-Claire} \snm{Quenez}\thanksref{m4}
\ead[label=e4]{quenez@lpsm.paris}}
\runauthor{M. Grigorova et al.}
\affiliation{ Bielefeld University \thanksmark{m1},  Humboldt University-Berlin \thanksmark{m2},\\ Université Cadi Ayyad \thanksmark{m3}, and Université Paris-Diderot \thanksmark{m4} }

\address{Centre for Mathematical Economics\\ Bielefeld University\\ Universitätsstr. 25\\ 33615 Bielefeld, Germany\\
\printead{e1}\\
}
\address{Institut für Mathematik\\
Humboldt Universität zu Berlin\\
Unter den Linden 6\\
10099 Berlin, Germany\\
\printead{e2}\\
}
\address{Département de Mathématiques\\
 Faculté des Sciences Semlalia\\
 Université Cadi Ayyad\\
 B.P. 2390\\
 Marrakech, Morocco\\
\printead{e3}\\}
\address{Laboratoire de Probabilités\\
 et Modèles Aléatoires\\
 Université Paris-Diderot\\
 Boîte courrier 7012\\
 75251 Paris Cedex 05, France\\
\printead{e4}\\
}

\end{aug}


\begin{abstract}
$\quad$ We consider the optimal stopping problem with non-linear $f$-expectation (induced by a BSDE) without making any regularity assumptions on the  payoff
 process $\xi$  and in the case of a general filtration. We show that the value family can be aggregated by an optional process $Y$. We characterize the process $Y$ as  the $\mathcal{E}^f$-Snell envelope of $\xi$. We also establish an infinitesimal characterization of the value process $Y$ in terms of a Reflected BSDE with $\xi$ as the obstacle.  
 To do this, we first establish some useful properties of irregular RBSDEs, in particular an existence and uniqueness result and a comparison theorem. 
\end{abstract}


\begin{keyword}
backward stochastic differential equation,  optimal stopping, $f$-expectation, non-linear expectation,   aggregation, dynamic risk measure, American option, strong $\mathcal{E}^f$-supermartingale, Snell envelope, reflected backward stochastic differential equation, comparison theorem, Tanaka-type formula, general filtration
\end{keyword}
\end{frontmatter}
\section{Introduction}\label{sec1}

The \textit{classical} optimal stopping probem with \textit{linear expectations} has been largely studied. General results on the topic can be found in El Karoui (1981) (\cite{EK}) where no regularity assumptions on the reward process $\xi$ are made. \\
In this paper, we are interested in a generalization of the classical optimal stopping problem where the linear expectation is replaced by a possibly non-linear functional, the so-called $f$-expectation ($f$-evaluation), induced by a BSDE with Lipschitz driver $f$. For a stopping time $S$ such that $0\leq S\leq T$ a.s. (where $T>0$ is a fixed terminal horizon), we define 
\begin{equation} \label{eq_intro}
V(S): =   {\rm ess} \sup_{\tau \in \T_{S,T}}{\cal E}^f_{S, \tau}(\xi_{\tau}),
\end{equation}
where  $\T_{S,T}$
 denotes the set of stopping times valued a.s. in $[S,T]$ and ${\cal E}^f_{S, \tau}(\cdot)$ denotes  the conditional $f$-expectation/evaluation at time $S$ when the   terminal time is $\tau$. 
 
 The above non-linear problem has been introduced in \cite{EQ96} in the case of a Brownian filtration and a continuous financial position/pay-off process $\xi$ and applied to the (non-linear) pricing of American options. It has then attracted considerable interest, in particular, due to its links with dynamic risk measurement (cf.,  e.g., \cite{Bayraktar-2}).  
 In the case of  a financial position/payoff process $\xi$, only supposed to be right-continuous,  this non-linear optimal stopping problem  has been studied
 in \cite{QuenSul2} (the case of Brownian-Poisson filtration), and in \cite{Bayraktar} 
where the non-linear expectation is supposed to be convex.
  To the best of our knowledge, \cite{MG} is the first paper addressing the stopping problem  \eqref{eq_intro} in the case of a non-right-continuous    process $\xi$  (with a Brownian-Poisson filtration); in \cite{MG} the  assumption of right-continuity of $\xi$ from the previous literature  is replaced by the weaker  assumption of right- uppersemicontinuity  (r.u.s.c.).\\
   In the present  paper, we study problem \eqref{eq_intro} in the case of a general filtration and without making any regularity assumptions on $\xi$, which allows for more flexibility in the modelling (compared to the cases of more regular payoffs and/or of particular filtrations).
   
  
The usual approach to address the classical optimal stopping problem (i.e., the case $f\equiv 0$ in  \eqref{eq_intro}) is a \emph{a direct approach}, based on  a direct study of the value family $(V(S))_{S\in\stopo}$. An important step in this approach is the aggregation of the value family by an optional process. The approach used in the literature to address the non-linear case (where $f$ is not necessarily equal to $0$) is \emph{an RBSDE-approach},  based on the study of a related Reflected BSDE  and on  linking directly the  solution of the Reflected BSDE with the value family $(V(S), S \in \T_{0,T})$ (and thus avoiding, in particular,  more technical aggregation questions). This approach (cf., e.g.,   \cite {MG}, \cite{QuenSul2}) requires at least the uppersemicontinuity of the reward process $\xi$  which we do not have here (cf. also Remark \ref{difficulty}).

Neither of the two approaches is applicable in the general framework of the present paper and we adopt \emph{a new approach which combines} some aspects of both the approaches.   
Our combined approach is the  following:  First, with the help of some results from the general theory of processes, we show that the value family $(V(S), S \in \T_{0,T})$ can be aggregated by a unique  right-uppersemicontinuous optional process $(V_t)_{t\in[0,T]}$. We characterize  the value process $(V_t)_{t\in[0,T]}$ as the  ${\cal E}^f$-{\em Snell envelope} of $\xi$, that is, the smallest strong ${\cal E}^f$-supermartingale greater than or equal to $\xi$.  Then,  we  turn to  establishing an infinitesimal characterization of the value process $(V_t)_{t\in[0,T]}$ in terms of a Reflected BSDE where the pay-off process $\xi$  from \eqref{eq_intro} plays the role of a lower obstacle. We emphasize that this \emph{RBSDE-part} of our approach is far from  mimicking  the one from the r.u.s.c. case;  we have to rely on very different  arguments here  due to the complete irregularity of the process $\xi$. 
 
Let us recall that Reflected BSDEs have been introduced  by El Karoui et al. in  the seminal paper \cite{ElKaroui97} in the case of a Brownian filtration and a continuous obstacle, and then generalized to the case of a  right-continuous obstacle and/or a larger stochastic basis than the Brownian one  in \cite{Ham}, \cite{CM}, \cite{HO1}, \cite{Essaky}, \cite{HO2}, \cite{QuenSul2}. In \cite{MG}, we have formulated a notion of Reflected BSDE in the case where the obstacle is only right-uppersemicontinuous (but possibly not right-continuous) and the filtration is the Brownian-Poisson filtration have shown existence and uniqueness of the solution.  In the present paper, we show that the existence and uniqueness result from \cite{MG} still holds in the  case  of a completely irregular obstacle and  a general filtration. In the recent preprint \cite{nouveau}, existence and uniqueness of the solution (in the Brownian framework) is shown by using a different approach, namely a penalization method.\\  
      We also establish a comparison result for RBSDEs with irregular obstacles {and general filtration}.
Due to the  complete irregularity of the obstacles and the presence of jumps, we are led to using  an approach which differs from those existing in the literature on comparison of RBSDEs (cf. also Remark \ref{Rmk_diff_0}); in particular, we first prove a generalization of Gal'chouk-Lenglart's formula (cf. \cite{Galchouk} and \cite{Lenglart})   to the case of convex functions, which we then  astutely apply in our framework  in order  to establish the comparison theorem. 
We also show an ${\cal E}^f$-Mertens decomposition for strong  ${\cal E}^f$-supermartingales, which generalizes to our framework the ones provided in the literature (cf. \cite{MG} or \cite{Bouchard}).
This result, together with our comparison theorem, 
  helps in the study  of the  non-linear  operator $\Ref^f$ which maps a given (completely irregular) obstacle to the solution of the RBSDE with driver $f$.  By using the properties of the operator $\Ref^f$,   we show that  $\Ref^f[\xi]$, that is, the (first component of the) solution to the Reflected BSDE with irregular obstacle $\xi$ and  driver $f$, is equal to the  ${\cal E}^f$-{\em Snell envelope} of $\xi$, from which we derive that it coincides with the value process $(V_t)_{t\in[0,T]}$ of problem \eqref{eq_intro}.  \\
 Finally, we give a financial application to the problem of pricing of American options with irregular pay-off in an imperfect market model. In particular,  we show that the superhedging price of the American  option with irregular pay-off $\xi$  is characterized as the solution of an associated RBSDE (where $\xi$ is the lower obstacle).  
Some examples of digital American options are
  given as 
particular cases.

The rest of the paper is organized as follows: In Section \ref{sec2} we give some preliminary definitions and some  notation. In Section  \ref{sec3} we revisit the classical optimal stopping problem  with irregular pay-off process $\xi$ and a general filtration.  We first give some general results such as aggregation, Mertens decomposition of the value process, Skorokhod conditions satisfied by the associated non decreasing processes; then,  we characterize the value process of the classical problem in terms of the solution of a Reflected BSDE  associated with a general filtration, with completely irregular obstacle and with a driver $f$ \emph{which does not depend on the solution}.  In Section \ref{subsect_RBSDEa}, we prove existence and uniqueness of the solution for \emph{general Lipschitz driver} $f$, an irregular obstacle $\xi$ and a general filtration.
In Section \ref{sec5}, we present the formulation or our \textit{non-linear} optimal stopping problem \eqref{eq_intro}. In Section \ref{rusc}, we provide some results on the particular case where the payoff $\xi$ is \emph{ right-uppersemicontinuous (r.u.s.c)}.\,, from which we derive an ${\cal E}^f$-Mertens decomposition 
of ${\cal E}^f$-strong supermartingales in the (general) framework of a general filtration (cf. Section \ref{Mertens2}). 
We then turn to the study of the case where $\xi$ is \textit{completely irregular}. Section \ref{irregularcase} is 
 devoted to the direct part of our approach to this problem
; in particular, we present the aggregation result and  the Snell characterization.  
 Section \ref{sec4} is devoted to establishing some properties of Reflected BSDEs with completely irregular obstacles, which will be used to establish an infinitesimal characterization of the value process of our problem \eqref{eq_intro} in the completely irregular case; more precisely,
we first provide a comparison theorem  (Subsection \ref{subsect_RBSDEb}); then, using this result together with the ${\cal E}^f$-Mertens decomposition,
we  establish  useful properties of the non-linear operator $\Ref^f$ (Subsection \ref{subsec_Ref}).
  In Section \ref{sec_infinitesimal}, using the results shown in the previous sections, we derive 
the infinitesimal characterization of the value of the non-linear optimal stopping problem \eqref{eq_intro} with a completely irregular payoff $\xi$ in terms of the solution of  our general RBSDE from Section \ref{subsect_RBSDEa}. 
In  Section \ref{sec_app} we give a financial application to the pricing of American options with irregular pay-off in an imperfect market model with jumps;  we also give a useful corollary of the infinitesimal characterization, namely, \emph{a priori estimates with universal constants} for RBSDEs with irregular obstacles and a general filtration. 



\section{Preliminaries} \label{sec2}

Let $T>0$ be a fixed positive real number.  Let  $E= \R^n\setminus\{0\}, \mathscr{E}= {\cal B}({\bf R}^n\setminus\{0\})$, which we    
equip with a $\sigma$-finite positive measure $\nu$.
Let $(\Omega,  \Fc, P)$ be a probability space equipped with a right-continuous complete filtration $\F = \{\Fc_t \colon t\in[0,T]\}$. Let $W$ be
a one-dimensional $\F$-Brownian motion $W$, and let $N(dt,de)$ an  $\F$-Poisson random measure  with compensator $dt\otimes\nu(de)$, supposed to be independent from $W$.
We denote by $\tilde N(dt,de)$ the compensated process, i.e. $\tilde N(dt,de):= N(dt,de)-dt\otimes\nu(de).$
We denote by ${\cal P}$ (resp. $\mathcal{O}$)   the predictable (resp. optional) $\sigma$-algebra
on $ \Omega\times [0,T]$.  The notation $L^2({\cal F}_T)$  stands for the space of random variables which are  $\Fc 
_T$-measurable and square-integrable. For $t\in [0,T],$ we denote by $\stopt$ the set of stopping times $\tau$ such that $P(t \leq\tau\leq T)=1.$ More generally, for a given stopping time $S\in \stopo$, we denote by $\ct_{S,T}$ the set of stopping times $ \tau$ such that $P(S \leq\tau\leq T)=1.$   \\





 We use also the following notation:\\ 
$\bullet$
$L^2_\nu$ is the set of $(\mathscr{E}, \mathcal{B}(\R))$-measurable functions $\ell:  E \rightarrow \R$ such that  $\|\ell\|_\nu^2:= \int_{ E}  |\ell(e) |^2 \nu(de) <  \infty.$
For $\ell\in\L^2_\nu$, $\mathpzc{k}\in\L^2_\nu$, we define $\langle \ell, \mathpzc{k}\rangle_\nu:=\int_E \ell(e)\mathpzc{k} (e) \nu(de)$.\\
$\bullet$    $\H^{2}$ is the set of 
$\R$-valued predictable processes $\phi$ with
 $\| \phi\|^2_{\H^{2}} := E \left[\int_0 ^T |\phi_t| ^2 dt\right] < \infty.$\\ 
$\bullet$ $\H_{\nu}^{2}$ is  the set of $\R$-valued processes $l: (\omega,t,e)\in(\Omega\times[0,T] \times  E)\mapsto l_t(\omega, e)$ which are {\em predictable}, that is $(\PC \otimes {\mathscr{E}},\BC(\R))$-measurable,
 and such that $\| l \|^2_{\H_{\nu}^{2}} :=E\left[ \int_0 ^T \|l_t\|_{\nu}^2 \,dt   \right]< \infty.$\\
$\bullet$ As in \cite{MG}, we  denote by  $ {\cal S}^{2}$ the vector space of $\R$-valued optional 
(not necessarily cadlag)
 processes $\phi$ such that
$\vertiii{\phi}^2_{{\cal S}^{2}} := E[\esssup_{\tau\in\T_0} |\phi_\tau |^2] <  \infty.$ By Proposition 2.1 in \cite{MG}, the mapping $\vertiii{\cdot}_{{\cal S}^{2}}$ is a norm on $ {\cal S}^{2}$,  and ${\cal S}^{2}$ endowed with this norm is a Banach space.  \\
$\bullet$ Let ${\cal M}^{2}$  be the set of square integrable  martingales $M= (M_t)_{t\in[0,T]}$ with $M_0=0$. This is a Hilbert space equipped with the scalar product 
$( M, M' ) _{{\cal M}^2}:= E[M_T M'_T]\,(= E[\, \langle M, M' \rangle_T]= E( \, [M, M' ]_T))$, for 
$M,M'$ $\in$ ${\cal M}^{2}$ (cf., e.g., \cite{Protter} IV.3). For each $M \in {\cal M}^{2}$, we set  $\| M\|^2_{{\cal M}^{2}}:= E( M_T^2)$. \\
$\bullet$ Let ${\cal M}^{2,\bot}$ be the subspace of martingales $h \in {\cal M}^{2}$
satisfying  $\langle h, W \rangle_\cdot =0$, and such that,
for all predictable processes $l$ $\in$ $\H_{\nu}^{2}$, 
\begin{equation} \label{ortho}
\langle h, \int_0^\cdot \int_E l_s(e) \tilde N(ds de) \rangle_t =0,\,\,\, 0 \leq t \leq T \quad {\rm a.s.}
\end{equation}
 
\begin{Remark} \label{2i} Note that condition \eqref{ortho} is equivalent to the fact that the square bracket process $[ \,h\,,  \int_0^\cdot \int_E l_s(e) \tilde N(ds de)\,]_t$ is a martingale (cf. the Appendix for additional comments on condition \eqref{ortho}).

Recall  also that the condition $\langle h, W \rangle_\cdot =0$ is equivalent to the orthogonality of $h$ (in the sense of the scalar product $( \cdot, \cdot ) _{{\cal M}^2}$) with respect to 
all stochastic integrals of the form 
$\int_0^\cdot z_s dW_s$, where 
$z \in \H^{2}$ (cf.\, e.g.\,, \cite{Protter} IV. 3 Lemma 2). 
Similarly, the condition \eqref{ortho} is equivalent to the orthogonality of $h$ with respect to 
all stochastic integrals of the form 
$\int_0^\cdot \int_E l_s(e) \tilde N(ds de)$, where 
$l$ $\in$ $\H_{\nu}^{2}$ (cf., e.g. , Lemma \ref{orthogonality} in the Appendix).
\end{Remark}
 We recall the following orthogonal decomposition property of martingales in 
${\cal M}^2$
(cf. Lemma III.4.24 in \cite{JS}). 
\begin{Lemma}\label{theoreme representation}
For each $M \in {\cal M}^2$, there exists a unique triplet 
$(Z,l,h) \in \H^{2} \times \H_{\nu}^{2} \times {\cal M}^{2, \bot}$ such that
\begin{equation}
\label{equation representation}
M_t  = \int_0^t Z_sdW_s+\int_0^t \int_{E} l_t(e) \tilde N(dt,de) +h_t\,, \quad\forall\,t\in[0,T] \quad a.s. 
\end{equation}
\end{Lemma}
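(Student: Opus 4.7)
The plan is to obtain the decomposition by a Hilbert-space orthogonal projection argument, since $\mathcal{M}^2$ endowed with $(\cdot,\cdot)_{\mathcal{M}^2}$ is a Hilbert space. I would proceed as follows.

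First, introduce the two stable subspaces
\begin{equation*}
\mathcal{L}_W := \Bigl\{ \textstyle\int_0^{\cdot} Z_s\,dW_s : Z \in \mathcal{H}^{2}\Bigr\},
\qquad
\mathcal{L}_{\tilde N} := \Bigl\{ \textstyle\int_0^{\cdot}\!\!\int_E l_s(e)\,\tilde N(ds,de) : l \in \mathcal{H}_\nu^{2}\Bigr\}
\end{equation*}
of $\mathcal{M}^2$. The It\^o isometries show that the maps $Z\mapsto \int_0^\cdot Z_s dW_s$ and $l\mapsto \int_0^\cdot\!\!\int_E l_s(e)\,\tilde N(ds,de)$ are isometric embeddings of the Hilbert spaces $\mathcal{H}^2$ and $\mathcal{H}^2_\nu$ into $\mathcal{M}^2$; hence $\mathcal{L}_W$ and $\mathcal{L}_{\tilde N}$ are closed. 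Moreover these subspaces are mutually orthogonal in $\mathcal{M}^2$: since $W$ and $N$ are independent, one checks that $[\int_0^\cdot Z_s dW_s,\int_0^\cdot\!\!\int_E l_s(e)\,\tilde N(ds,de)]$ vanishes (the continuous martingale part $\int_0^\cdot Z_s dW_s$ has no common jumps with the purely discontinuous integral against $\tilde N$, and the predictable brackets vanish by independence), so the scalar product $(\cdot,\cdot)_{\mathcal{M}^2}$ of any two such integrals equals $0$.

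Second, form the orthogonal sum $\mathcal{L} := \mathcal{L}_W \oplus \mathcal{L}_{\tilde N}$, which is again a closed subspace of $\mathcal{M}^2$. By the projection theorem, any $M\in\mathcal{M}^2$ decomposes uniquely as $M = M_1 + h$ with $M_1\in\mathcal{L}$ and $h\in\mathcal{L}^\perp$. The orthogonal decomposition of $\mathcal{L}$ itself then yields unique $Z\in\mathcal{H}^2$ and $l\in\mathcal{H}^2_\nu$ with
\begin{equation*}
M_1 \;=\; \int_0^{\cdot} Z_s\,dW_s \;+\; \int_0^{\cdot}\!\!\int_E l_s(e)\,\tilde N(ds,de),
\end{equation*}
which gives the desired representation \eqref{equation representation} (with $M_0=0$ the initial values match automatically).

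Third, I would identify $\mathcal{L}^\perp$ with $\mathcal{M}^{2,\bot}$. For $h\in\mathcal{L}^\perp$, orthogonality to every element of $\mathcal{L}_W$ is equivalent, by the characterization recalled in Remark~\ref{2i}, to $\langle h,W\rangle \equiv 0$; orthogonality to every element of $\mathcal{L}_{\tilde N}$ is equivalent, again by Remark~\ref{2i}, to \eqref{ortho}. Hence $h\in\mathcal{M}^{2,\bot}$, and conversely any such $h$ lies in $\mathcal{L}^\perp$.

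Uniqueness of the triplet $(Z,l,h)$ is immediate: if two such decompositions coincided, subtracting them would give an element of $\mathcal{L}\cap \mathcal{L}^\perp=\{0\}$, and then the It\^o isometries force $Z=Z'$ and $l=l'$ (hence $h=h'$). The only genuinely technical point is the orthogonality $\mathcal{L}_W\perp\mathcal{L}_{\tilde N}$ and the characterization of $\mathcal{L}^\perp$ as $\mathcal{M}^{2,\bot}$; both rest on the identities between the Hilbert scalar product and predictable brackets that are recalled in Remark~\ref{2i} (with the auxiliary Lemma~\ref{orthogonality} in the Appendix). This is exactly the content of Lemma III.4.24 in \cite{JS}, from which the statement follows directly.
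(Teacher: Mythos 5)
Your proposal is correct. Note, however, that the paper does not prove this lemma at all: it is stated as a recalled fact with a bare reference to Lemma III.4.24 of \cite{JS}, so there is no ``paper proof'' to match; what you have written is a genuine, self-contained argument where the authors content themselves with a citation. Your Hilbert-space projection scheme is the standard way to establish the result, and the one point that actually carries weight is the third step: the definition of ${\cal M}^{2,\bot}$ requires \emph{strong} orthogonality (vanishing of the predictable brackets $\langle h, W\rangle$ and of \eqref{ortho}), whereas the projection theorem only hands you \emph{weak} orthogonality ($(h,M')_{{\cal M}^2}=0$ for all $M'$ in the stable subspaces). You correctly bridge this gap by invoking the equivalences of Remark \ref{2i} and Lemma \ref{orthogonality}, which rest precisely on the stability of $\mathcal{L}_W$ and $\mathcal{L}_{\tilde N}$ under multiplication by bounded predictable processes; without that step the argument would be incomplete. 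Two minor remarks: the mutual orthogonality of $\mathcal{L}_W$ and $\mathcal{L}_{\tilde N}$ is really a consequence of the first being continuous and the second purely discontinuous (so the quadratic covariation vanishes identically), independence of $W$ and $N$ is not the operative fact; and it is worth saying explicitly that the sum of two closed, mutually orthogonal subspaces of a Hilbert space is closed, which is what makes the projection onto $\mathcal{L}$ legitimate. With those glosses your argument is complete and consistent with the cited result.
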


%
\begin{Definition}[Driver, Lipschitz driver]\label{defd}
A function $f$ is said to be a {\em driver} if 
\begin{itemize}
\item  
$f: \Omega  \times [0,T]  \times \R^2 \times L^2_\nu \rightarrow \R $\\
$(\omega, t,y, z, \mathpzc{k}) \mapsto  f(\omega, t,y, z, \mathpzc{k})  $
  is $ \mathcal{P} \otimes {\cal B}(\R^2)  \otimes {\cal B}(L^2_\nu) 
- $ measurable,  
\item $E[\int_0^T f(t,0,0,0)^2dt] < + \infty$.
\end{itemize} 
A driver $f$ is called a {\em Lipschitz driver} if moreover there exists a constant $ K \geq 0$ such that $dP \otimes dt$-a.e.\,, 
for each $(y_1, z_1, \mathpzc{k}_1)\in \R^2 \times L^2_\nu$, $(y_2, z_2, \mathpzc{k}_2)\in \R^2 \times L^2_\nu$, 
$$|f(\omega, t, y_1, z_1, \mathpzc{k}_1) - f(\omega, t, y_2, z_2, \mathpzc{k}_2)| \leq 
K (|y_1 - y_2| + |z_1 - z_2| +   \|\mathpzc{k}_1 - \mathpzc{k}_2 \|_\nu).$$
\end{Definition}

\begin{Definition}[BSDE, conditional $f$-expectation]\label{BSDE}

We have (cf., e.g., Remark \ref{EDSRclassique} in the Appendix) that if   $f$ is a Lipschitz driver and if $\xi$ is  in $L^2({\cal F}_T)$, then there exists a unique solution $(X,\pi, l,h)\in{\cal S}^2 \times \H^2 \times \H^2_\nu\times {\cal M}^{2, \bot}$ to the following BSDE:\\
$-dX_t= f(t, X_t,  \pi_t, l_t)dt-  \pi_t dW_t-\int_E  l_t(e) \tilde N(dt,de)-dh_t; \quad X_T= \xi.$\\
For $t\in[0,T]$, the (non-linear) operator $\mathcal{E}^f_{t,T}(\cdot): L^2(\cf_T)\rightarrow L^2(\cf_t)$ which maps a given terminal condition
$\xi\in L^2(\cf_T)$ to the position $X_t$ (at time $t$) of the first component  of the solution of the above BSDE 
is called  \emph{conditional $f$-expectation at time $t$}.  As usual, this notion  can be extended to
the case where the (deterministic) terminal time 
$T$
is replaced by a (more general) stopping time $\tau\in\stopo$,  $t$ is replaced by a stopping time $S$ such that $S\leq \tau$ a.s. and  the domain $L^2(\cf_T)$ of the operator    is replaced  by $L^2(\cf_\tau)$.

\end{Definition}
We now pass to the notion of Reflected BSDE. Let $T>0$ be a fixed terminal time. Let $f$ be  a  driver. 
Let $\xi= (\xi_t)_{t\in[0,T]}$ be a   process in ${\cal S}^2$. 

We define the  process $(\overline {\xi}_{t})_{t\in ]0,T]}$ by $\overline{\xi}_t:= \limsup_{s \uparrow t, s< t} \xi_s,$ for all $t\in]0,T].$ We recall that $\overline \xi$ is a predictable process (cf. \cite[Thm. 90, page 225]{DM1}). The process $\underline \xi$ is left upper-semicontinuous and is called the left upper-semicontinuous envelope of $\xi$.

\begin{Definition}[Reflected BSDE] \label{def_solution_RBSDE}
A process $(Y,Z,k,h,A,C)$ is said to be a solution to the reflected BSDE with parameters $(f,\xi)$, where $f$ is a driver and $\xi$ is a  process in ${\cal S}^2$, if, 
\footnote{As usual, equation \eqref{RBSDE} means that a.s.\,, for all $t\in[0,T]$, we have:\\
$$Y_t=Y_T+\int_{t}^T f(s,Y_s,  Z_s,k_s)ds-\int_{t}^T  Z_s dW_s-\int_{t}^T \int_E k_s(e) \tilde N(ds,de)  -  h_T+ h_t+A_T-A_t+C_{T-} -C_{t-}.$$}
\begin{align}\label{RBSDE}
&(Y,Z,k,h,A,C)\in {\cal S}^2 \times \H^2 \times \H^2_\nu \times {\cal M}^{2, \bot} \times {\cal S}^2\times {\cal S}^2,   
\nonumber \\
&-d  Y_t  =  f(t,Y_{t}, Z_{t}, k_t)dt + dA_t + d C_{t-}-  Z_t dW_t-\int_E k_t(e)\tilde{N}(dt,de) - dh_t, \,0\leq t\leq T,
\\
& Y_T=\xi_T \,\text{ a.s.,} \,\text{ and}\,\, \,Y_t \geq \xi_t   \text{ for all } t\in[0,T],  \text{ a.s.,} \nonumber\\
& A \text{ is a nondecreasing right-continuous predictable process 
with } A_0= 0 \text{ and such that } \nonumber\\
& \int_0^T {\bf 1}_{\{Y_{t-} > \overline{\xi}_{t}\}} dA^c_t = 0 \text{ a.s. and } \; (Y_{\tau-}-\overline{\xi}_{\tau})(A^d_{\tau}-A^d_{\tau-})=0 \text{ a.s. for all predictable }\tau\in\stopo, \label{rRBSDE_A}\\   
& C \text{ is a nondecreasing right-continuous adapted purely discontinuous process with } C_{0-}= 0  \nonumber\\
& \text{ and such that }
(Y_{\tau}-\xi_{\tau})(C_{\tau}-C_{\tau-})=0 \text{ a.s. for all }\tau\in\stopo. \label{rRBSDE_C}
\end{align}
\end{Definition}
Here  $A^c$ denotes the continuous part of the process $A$ and $A^d$  its discontinuous part.\\
Equations \eqref{rRBSDE_A} and \eqref{rRBSDE_C} are referred to as \emph{minimality conditions} or \emph{Skorokhod conditions}.

For real-valued random variables $X$ and $X_n$, $n \in \N$, the notation   "$X_n\uparrow X$" stands  for "the sequence $(X_n)$ is nondecreasing and converges to $X$ a.s.".  \\
For a ladlag process $\phi$, we denote by $\phi_{t+}$ and $\phi_{t-}$ the right-hand and left-hand limit of $\phi$ at  $t$. We denote by $\Delta_+ \phi_t:=\phi_{t_+}-\phi_t$ the size of the right jump of $\phi$ at $t$, and by   $\Delta \phi_t:=\phi_t-\phi_{t-}$ the size of the left jump of $\phi$ at $t$.

\begin{Remark}\label{rRmk_left_uppersemicontinuous envelope}
In the particular case where $\xi$ has left limits, we can replace the process $(\overline{\xi}_t)$ by the process of left limits $(\xi_{t-})$ in the Skorokhod condition   \eqref{rRBSDE_A}. 
\end{Remark}

\begin{Remark}\label{Rmk_the_jumps_of_C}
If $(Y,Z,k,h,A,C)$ is a solution to the RBSDE defined above, by \eqref{RBSDE}, we have $\Delta C_t=Y_t-Y_{t+}$, which implies that $Y_t\geq Y_{t+}$, for all $t\in[0,T)$.  Hence, $Y$ is r.u.s.c. Moreover, from $C_\tau-C_{\tau-}=-(Y_{\tau+}-Y_{\tau})$, combined with the Skorokhod condition \eqref{rRBSDE_C}, we derive $(Y_\tau-\xi_\tau)(Y_{\tau+}-Y_{\tau})=0$, a.s. for all $\tau\in\stopo$.    This, together with $Y_\tau\geq \xi_\tau$ and $Y_\tau\geq Y_{\tau+}$ a.s., leads to $Y_\tau= Y_{\tau+}\vee \xi_\tau$ a.s. for all $\tau\in\stopo$.  
%
\begin{Definition} \label{defr} 
%
%
 
 Let $\tau \in \T_0$. An optional process $(\phi_t)$ is said to be {\em right upper-semicontinuous (resp. left upper-semicontinuous})  {\em along stopping times} if for all stopping time $\tau \in \T_0$  and for all non increasing (resp. non decreasing) sequence of stopping times $ (\tau_n)$ such that $\tau^n \downarrow \tau$ (resp. $\tau^n \uparrow \tau$) a.s.\,,
$\phi_{\tau} \geq \limsup_{n\to \infty} \phi_{\tau_n} \;\; \rm{a.s.}$.  


\end{Definition}

\end{Remark}

\section{The classical optimal stopping problem}\label{sec3}

%
In this section, we revisit the classical (linear) optimal stopping problem with irregular pay-off process and a general filtration. 
\subsection{The classical linear optimal stopping problem revisited}\label{subsection_linear}
 Let $(\xi_t)_{t\in[0,T]}$ be a 
 process belonging to ${\cal S}^2$, called the {\em reward process} or the {\em pay-off process}.
For each $S \in \stopo$,  we define the value  $ v  (S)$ at time $S$ by
\begin{eqnarray}\label{ddeux-2_prel}
 v  (S):= \esssup_{\tau \in \stops} E[ \xi_{\tau} \mid \Fc_S].
\end{eqnarray} 
\begin{Lemma} \label{pro_prel}


 \begin{description}
 \item[(i)]  There exists a ladlag optional process $( v_t)_{t\in[0,T]}$ which aggregates the family 
 $(v(S))_{S\in\stopo}$ (i.e. $ v_S= v (S)$ a.s. for all $S\in\stopo$).\\
Moreover, the process $( v_t )_{t\in[0,T]}$ is the smallest strong supermartingale
 greater than or equal to $( \xi_t)_{t\in[0,T]}$.
 \item[(ii)] We have 
$ v_S=\xi_S\vee  v_{S+}$ a.s. 
   for all $S\in  {\cal T}_{0,T}$.
 \item[(iii)]\footnote{Note that in the case of a not necessarily non-negative pay-off process $\xi$ this result holds up to a translation by the martingale $X_S:=E[\esssup_{\tau\in\stopo} \xi_\tau^-|\cf_S]$ (cf. e.g. Appendix A in \cite{Marc}). More precisely, the property holds for $\tilde{v}:=v+X$ and $\tilde\xi=\xi+X$.}  For each $S \in  {\cal T}_{0,T}$ and for each $\lambda \in ]0,1[$, 
  the process $( v_t )_{t\in[0,T]}$ is a  martingale on 
$[S, \tau_S^{\lambda}]$, where  $\tau^{\lambda}_S:= \inf\{t\geq S \,, \lambda { v}_t(\omega)\leq \xi_t \}$.
 \end{description} 
 \end{Lemma}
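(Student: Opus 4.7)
The plan is to adapt the classical direct approach of El~Karoui (1981) to the present setting of a completely irregular payoff and a general right-continuous filtration.

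For (i), I would first verify that the family $\{v(S): S \in \stopo\}$ is an admissible supermartingale system in the sense required by the general aggregation theory: (a) \emph{stability under pasting}, i.e., for $S_1,S_2 \in \stopo$ and $A \in \mathcal{F}_{S_1 \wedge S_2}$, setting $S = S_1 \mathbf{1}_A + S_2 \mathbf{1}_{A^c}$ one has $v(S) = v(S_1)\mathbf{1}_A + v(S_2)\mathbf{1}_{A^c}$ a.s., a consequence of the corresponding stability of the classes $\mathcal{T}_{S,T}$; (b) the \emph{supermartingale inequality} $E[v(T) \mid \mathcal{F}_S] \leq v(S)$ for $S \leq T$ in $\stopo$, from $\mathcal{T}_{T,T} \subset \mathcal{T}_{S,T}$ and the tower property; and (c) $\mathcal{S}^2$-integrability $\sup_S v(S)^2 \in L^1$, which follows from $\xi \in \mathcal{S}^2$ and Doob's inequality applied to the closed martingale $E[\esssup_\tau |\xi_\tau| \mid \mathcal{F}_\cdot]$. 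The aggregation theorem (El~Karoui (1981), Proposition~2.13) then produces a unique (up to indistinguishability) ladlag optional process $(v_t)$ with $v_S = v(S)$ a.s.~for every $S \in \stopo$. Domination $v \geq \xi$ follows by choosing $\tau = S$ in \eqref{ddeux-2_prel}, and minimality is the standard argument: for any strong supermartingale $U$ dominating $\xi$ and any $\tau \in \stops$, optional sampling gives $U_S \geq E[U_\tau \mid \mathcal{F}_S] \geq E[\xi_\tau \mid \mathcal{F}_S]$, whence $U_S \geq v_S$ on taking $\esssup$.

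For (ii), the inequality $v_S \geq \xi_S \vee v_{S+}$ is immediate: the first summand from domination, the second from the strong supermartingale inequality $v_S \geq E[v_{S + 1/n} \mid \mathcal{F}_S]$ combined with the right-continuity of $\mathbb{F}$ and the definition of the right limit. For the reverse, I would take any $\tau \in \stops$ and split
\[
E[\xi_\tau \mid \mathcal{F}_S] \;=\; \xi_S \mathbf{1}_{\{\tau = S\}} \;+\; E[\xi_\tau \mathbf{1}_{\{\tau > S\}} \mid \mathcal{F}_S].
\]
On $\{\tau > S\}$, I approximate $\tau$ by $\tau_n := \tau \vee (S + 1/n) \wedge T \in \mathcal{T}_{S+1/n, T}$ (so $\tau_n = \tau$ eventually on this event), apply $E[\xi_{\tau_n}\mid \mathcal{F}_{S+1/n}] \leq v_{S+1/n}$, and pass to the limit using right-continuity of $\mathbb{F}$ and uniform integrability. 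This yields $E[\xi_\tau \mathbf{1}_{\{\tau > S\}} \mid \mathcal{F}_S] \leq v_{S+}\mathbf{1}_{\{\tau > S\}}$, hence $E[\xi_\tau \mid \mathcal{F}_S] \leq \xi_S \vee v_{S+}$, from which $v_S \leq \xi_S \vee v_{S+}$ follows upon taking $\esssup$.

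For (iii), the strong supermartingale property already gives $E[v_{\tau_S^\lambda} \mid \mathcal{F}_S] \leq v_S$; the task is the reverse inequality. The plan is to exploit the minimality from (i) by constructing, for fixed $\lambda \in (0,1)$, an auxiliary process $\tilde{v}$ which coincides with $v$ outside $[S, \tau_S^\lambda]$ and equals the closed martingale $E[v_{\tau_S^\lambda} \mid \mathcal{F}_\cdot]$ inside, and to verify that $\tilde{v}$ is both a strong supermartingale (from a martingale-to-supermartingale concatenation at $\tau_S^\lambda$, which is well-posed since $v$ is a strong supermartingale) and dominates $\xi$ globally. Global dominance is immediate off $[S, \tau_S^\lambda)$; on this interval the slack $\lambda v_t > \xi_t$ combined with the right-upper-semicontinuity of $v$ along stopping times, which is a consequence of (ii), gives the boundary inequality $\lambda v_{\tau_S^\lambda} \leq \xi_{\tau_S^\lambda}$, from which $E[v_{\tau_S^\lambda} \mid \mathcal{F}_t] \geq \xi_t$ on $[S, \tau_S^\lambda)$ will follow by a tower-property and $\lambda$-slackness argument. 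Minimality then forces $\tilde{v} \geq v$, so in particular $E[v_{\tau_S^\lambda} \mid \mathcal{F}_S] = \tilde{v}_S \geq v_S$, establishing the martingale identity. The reduction to the case $\xi \geq 0$ is handled via the translation mentioned in the footnote. The principal obstacle lies in this last step: the complete irregularity of $\xi$ allows two-sided jumps at $\tau_S^\lambda$ and prevents the infimum defining $\tau_S^\lambda$ from being attained pointwise, so the dominance check $\tilde{v} \geq \xi$ must rely on semicontinuity properties of $v$ (not of $\xi$) and a careful use of the $\lambda < 1$ slack to absorb the lack of left-regularity across the interior of $[S,\tau_S^\lambda]$.
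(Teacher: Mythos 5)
Parts (i) and (ii) of your proposal follow the standard aggregation/strict-value route that the paper itself invokes (it simply refers to Proposition A.5 of \cite{MG} and to \cite{EK}), and they are fine. The problem is part (iii), where your argument has a genuine gap, and it is precisely the gap that makes the completely irregular case delicate. Your plan is to replace $v$ on $[S,\tau_S^\lambda]$ by the martingale $E[v_{\tau_S^\lambda}\mid\mathcal{F}_\cdot]$ and to invoke minimality, which requires the domination $E[v_{\tau_S^\lambda}\mid\mathcal{F}_t]\geq \xi_t$ on $[S,\tau_S^\lambda)$. You propose to get this from the boundary inequality $\lambda v_{\tau_S^\lambda}\leq \xi_{\tau_S^\lambda}$ plus a ``$\lambda$-slackness'' step. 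Both halves fail. First, the boundary inequality is exactly the kind of statement that is \emph{false} without right-uppersemicontinuity of $\xi$: the infimum defining $\tau_S^\lambda$ is approached through times $t_n\downarrow\tau_S^\lambda$ with $\lambda v_{t_n}\leq\xi_{t_n}$, and passing to the limit needs $\limsup_n\xi_{t_n}\leq\xi_{\tau_S^\lambda}$, i.e.\ r.u.s.c.\ of $\xi$, not of $v$ (the paper makes this point explicitly at the start of Section 7 for the analogous $\varepsilon$-version, inequality \eqref{lambdabis}). Second, even granting the boundary inequality, the interior step is circular: from $\xi_t<\lambda v_t$ on $[S,\tau_S^\lambda)$ you can conclude $\xi_t\leq E[v_{\tau_S^\lambda}\mid\mathcal{F}_t]$ only if $\lambda v_t\leq E[v_{\tau_S^\lambda}\mid\mathcal{F}_t]$, and since the supermartingale property already gives $v_t\geq E[v_{\tau_S^\lambda}\mid\mathcal{F}_t]$, this is (up to the factor $\lambda$) the martingale identity you are trying to prove.

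The paper's proof of (iii) is by citation to Maingueneau \cite{Maingueneau} (see also \cite{EK} and Lemma 2.7 in \cite{Kob}), and Remark \ref{Rk_Maingueneau} stresses that this argument ``relies heavily on the convexity of the problem.'' The convexity enters exactly where your argument breaks: instead of applying minimality to a concatenated process, one applies it (after reducing to $\xi\geq 0$ via the translation in the footnote) to the family $\hat v(S):=\lambda v(S)+(1-\lambda)E[v(\tau_S^\lambda)\mid\mathcal{F}_S]$. Its domination of $\xi$ needs no boundary inequality: on $\{\lambda v_S\leq\xi_S\}$ one has $\tau_S^\lambda=S$ and $\hat v(S)=v(S)\geq\xi_S$, while on $\{\lambda v_S>\xi_S\}$ nonnegativity of $v$ gives $\hat v(S)\geq\lambda v(S)>\xi_S$. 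Minimality then yields $\lambda v(S)+(1-\lambda)E[v(\tau_S^\lambda)\mid\mathcal{F}_S]\geq v(S)$, hence $E[v(\tau_S^\lambda)\mid\mathcal{F}_S]\geq v(S)$, and the supermartingale inequality closes the argument. You would need to replace your construction of $\tilde v$ by this convex-combination device (and check the supermartingale-system property of $\hat v$, which is where the consistency $\tau_t^\lambda=\tau_S^\lambda$ for $t\in[S,\tau_S^\lambda)$ is used) for part (iii) to be correct.
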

 
 \dproof
 These results are due to  classical results of optimal stopping theory. For a sketch of the proof of 
 the first two  assertions, the reader is referred to the proof of Proposition A.5 in the Appendix of \cite{MG} (which still holds for a general process $\xi \in {\cal S}^2$).
 The last assertion corresponds to a result of optimal stopping theory (cf. \cite{Maingueneau}, \cite{EK} or Lemma 2.7 in \cite{Kob}). Its proof 
is based on a penalization method (used in convex analysis), introduced by Maingueneau (1978) (cf. the proof of Theorem 2 in \cite{Maingueneau}), which does not require any regularity assumption on the reward process $\xi$. 
\fproof

\begin{Remark}\label{sautY_prel}
It follows from $(ii)$ in the above lemma that 
$\Delta_+  v_{S}= {\bf 1}_{\{ v_S= \xi_S\}}\Delta_+  v_{S}$ a.s.
\end{Remark}

\begin{Remark}\label{Rk_Maingueneau}
Let us note for further reference that Maingueneau's penalization approach for showing  the martingale property on 
$[S,\tau_S^{\lambda}]$ (property (iii) in the above lemma)   relies heavily on the convexity of the problem. 
\end{Remark}

\begin{Lemma}\label{propertiesvalue_prel}  \begin{description}
 \item[(i)]   
The value  process ${ V}$ of Lemma \ref{pro_prel} 
 belongs to ${\cal S}^{2}$ and admits the following (Mertens) decomposition:
\begin{equation}\label{eq_Mertens_decomposition_Y_prel}
{ v}_t=v_0+M_t-A_t-C_{t-}, \text{ for all $t\in[0,T]$ a.s.}, 
\end{equation}
where $M \in {\cal M}^2$, $A$ is a nondecreasing right-continuous predictable process 
such that $ A_0= 0$, $E(A_T^2)<\infty$, and $C$ is a nondecreasing right-continuous adapted purely discontinuous process such that  $C_{0-}= 0$, $E(C_T^2)<\infty$. 
 \item[(ii)]  
For each $\tau \in\stopo$, we have
 $\Delta C_{\tau}= {\bf 1}_{\{{ v}_\tau= \xi_\tau\}}\Delta C_{\tau}$ a.s.\,
 \item[(iii)]  For each predictable  $\tau \in\stopo$, we have 
$\Delta A_{\tau}= {\bf 1}_{\{{ v}_{\tau-}= \, \overline{\xi}_{\tau}\}}\Delta A_{\tau}$ a.s.\,
\end{description}
\end{Lemma}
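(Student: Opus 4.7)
The plan is to address the three assertions sequentially, leveraging the corresponding parts of Lemma \ref{pro_prel}.

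For (i), I would first bound $|v(S)| \leq E[\esssup_{\tau\in\stopo} |\xi_\tau| \mid \Fc_S]$, so that by Doob's $L^2$ inequality and $\xi \in \mathcal{S}^2$ the value process $v$ lies in $\mathcal{S}^{2}$ and, in particular, is a strong optional supermartingale of class (D). The decomposition \eqref{eq_Mertens_decomposition_Y_prel} is then the classical Mertens decomposition theorem for strong optional supermartingales of class (D). The square-integrability of $A$ and $C$ (and consequently $M \in \mathcal{M}^2$) follows by standard estimates: from $M = v - v_0 + A + C_{-} \in \mathcal{S}^{2}$ together with the monotonicity of $A$ and $C$, one controls $E[A_T^2]$ and $E[C_T^2]$ by $\vertiii{v}^2_{\mathcal{S}^{2}}$ up to a multiplicative constant.

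For (ii), I would take the right-limit in \eqref{eq_Mertens_decomposition_Y_prel}: since $M$ is cadlag and $A$ is right-continuous, $v_{t+} = v_0 + M_t - A_t - C_t$, hence $\Delta_+ v_t = -\Delta C_t$. Assertion (ii) then follows at once from Remark \ref{sautY_prel}.

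Assertion (iii) is the delicate part. Fix a predictable $\tau \in \stopo$ and set $B := \{v_{\tau-} > \overline{\xi}_\tau\} \in \Fc_{\tau-}$; the goal is $\mathbf{1}_B \Delta A_\tau = 0$ a.s. I choose an announcing sequence $(\tau_n)$ with $\tau_n \uparrow \tau$, $\tau_n < \tau$ on $B$ and $\tau_n = T$ on $B^c$. On $B$, pick $\lambda < 1$ close enough to $1$ so that $\lambda v_{\tau-} > \overline{\xi}_\tau$; since $v$ has left limits and $\limsup_{s \uparrow \tau, s<\tau} \xi_s = \overline{\xi}_\tau$, one gets $\lambda v_t > \xi_t$ for all $t$ in a random left-neighbourhood of $\tau$, hence $\tau^{\lambda}_{\tau_n} \geq \tau$ on $B$ for $n$ large. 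Lemma \ref{pro_prel}(iii) then says that $v$ is a martingale on $[\tau_n, \tau^{\lambda}_{\tau_n}]$, so the predictable increasing process $A$ is constant on $[\tau_n, \tau]$ on $B$ for such $n$; letting $n \to \infty$ and using $A_{\tau-} = \lim_n A_{\tau_n}$ yields $\Delta A_\tau = 0$ on $B$. The footnote to Lemma \ref{pro_prel}(iii) allows reduction to the non-negative case by adding the martingale $E[\esssup_{\tau} \xi_\tau^- \mid \Fc_\cdot]$, which leaves the increments of $A$ and $C$ unchanged.

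The main obstacle is the localization argument in (iii): the event $B$ is only $\Fc_{\tau-}$-measurable, so the announcing sequence must be tailored to $B$, and the pathwise comparison $\lambda v_t > \xi_t$ on a left-neighbourhood of $\tau$ must be obtained uniformly in $t$ (not just at $t = \tau_n$) so as to force $\tau^{\lambda}_{\tau_n} \geq \tau$ and thereby invoke the martingale property of Lemma \ref{pro_prel}(iii).
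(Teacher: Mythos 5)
Your proposal is correct and follows essentially the same route as the paper: (i) via martingale inequalities and the Mertens decomposition for strong supermartingales of class (D), (ii) by taking right limits in the decomposition and invoking Remark \ref{sautY_prel}, and (iii) via the identity $A_S=A_{\tau_S^{\lambda}}$ deduced from Lemma \ref{pro_prel}(iii) together with the reduction to the nonnegative case — the paper simply cites El Karoui (Proposition 2.34) for this last step, whereas you spell the argument out. One minor caveat: your modified announcing sequence (set equal to $T$ on $B^c$) need not consist of stopping times since $B$ is only $\Fc_{\tau-}$-measurable, but this is harmless because the equality $A_{\tau_n}=A_{\tau_{\tau_n}^{\lambda}}$ holds a.s.\ globally for each fixed rational $\lambda$ and can then simply be restricted to $B$.
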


\dproof    By Lemma \ref{pro_prel} (i), the process $({ v}_t )_{t\in[0,T]}$ is a strong supermartingale.
Moreover,  by using martingale inequalities, it can be shown 
that
$E[\esssup_{S\in\stopo}|{ V}_S|^2]\leq  c\vertiii{\xi}^2_{{\cal S}^{2}}.$
Hence,  the process $({ v}_t )_{t\in[0,T]}$ is in $\mathcal{S}^2$ (a fortiori, of class   (D)). Applying
Mertens decomposition for strong supermartingales of class (D) (cf., e.g.,  
 \cite[Appendix 1, Thm.20, equalities 
(20.2)]{DM2}) 
  gives the decomposition \eqref{eq_Mertens_decomposition_Y_prel}, where $M$ is a cadlag uniformly integrable martingale, $A$ is a nondecreasing right-continuous predictable process 
such that $ A_0= 0$, $E(A_T)<\infty$, and $C$ is a nondecreasing right-continuous adapted purely discontinuous process such that  $C_{0-}= 0$, $E(C_T)<\infty$.  Based on some results of Dellacherie-Meyer \cite{DM2} (cf., e.g., Theorem A.2 and Corollary A.1 in \cite{MG}), we derive that $A \in\mathcal{S}^2$ and $C \in\mathcal{S}^2$, 
which gives the assertion  (i).
  
 Let $\tau \in\stopo$. 
By Remark \ref{sautY_prel} together with
 Mertens decomposition \eqref{eq_Mertens_decomposition_Y_prel}, we get $\Delta C_{\tau}= - \Delta_+ { v}_{\tau}$ a.s. It follows that 
$\Delta C_{\tau}= {\bf 1}_{\{{ v}_\tau= \xi_\tau\}}\Delta C_{\tau}$ a.s.\,, 
which corresponds to  (ii).

%

Assertion  (iii) (concerning the jumps of $A$) is due to 
 El Karoui \footnote{Note that the proof in El Karoui \cite{EK} is given for nonnegative pay-off $\xi$.  To pass from this to the more general case where $\xi$ might take also negative values, we apply  the result by El Karoui \cite{EK} with $\tilde \xi:= \xi+X$ (which is non-negative) and $\tilde v:=v+X$, where the process $X=(X_t)$ is defined by  $X_t:=E[\esssup_{\tau\in\stopo} \xi_\tau^-|\cf_t]$. We then notice that the
 Mertens process $(A,C)$ from the Mertens decomposition of $v$ is the same as the Mertens process ($\tilde A$, $\tilde C$) from the Mertens decomposition of $\tilde v$ (indeed, only the martingale parts of the two decompositions differ by $X$).   Moreover, we see that the set ${\{{ v}_{\tau-}= \, \overline{\xi}_{\tau}\}}$ is the same as the set where $v$ is replaced by $\tilde v$ and $\xi$ is replaced by $\tilde \xi$ (this is due to the fact that $X$ is a martingale and thus has left limits; so $\overline X_t=X_{t-}$).} (\cite[Proposition 2.34]{EK})   
Its proof is based on the equality $A_S= A_{\tau_S^{\lambda}}$ a.s.\,, for each 
$S \in  {\cal T}_{0,T}$ and for each $\lambda \in ]0,1[$ (which follows from Lemma \ref{pro_prel} (iii) together with 
 Mertens decomposition \eqref{eq_Mertens_decomposition_Y_prel}).

\fproof

The following minimality property for the continuous part $A^c$ is well-known from the literature in the "more regular" cases (cf., e.g.,   \cite{ERRATUM}    for the right-uppersemicontinuous case). In the case of completely irregular $\xi$, this minimality property  was  not explicitly available.  Only recently, it was proved by  \cite{nouveau} (cf. Proposition 3.7) in the Brownian framework. Here, we generalize the result of \cite{nouveau} to the case of a general filtration by using different analytic arguments. 

\begin{Lemma}\label{propertiesvalue_Ac_prel}
The continuous part $A^c$ of $A$ satisfies the equality $\int_0^T {\bf 1}_{\{{ v}_{t-} > \overline{\xi}_{t}\}} dA^c_t = 0$ a.s.\, 
\end{Lemma}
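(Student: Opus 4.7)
My plan is to exploit the Maingueneau-type martingale property from Lemma \ref{pro_prel}(iii) to show that $A^c$ must be locally constant to the left of any time at which $v_-$ strictly exceeds $\overline{\xi}$, and then to invoke an elementary measure-theoretic argument path by path. As a preliminary reduction, I assume $\xi\geq 0$ (hence $v\geq 0$) via the translation trick recalled in the footnote of Lemma \ref{pro_prel}(iii); the auxiliary martingale $X$ being cadlag gives $\overline X_t=X_{t-}$, so the sets $\{\tilde v_{t-}>\overline{\tilde\xi}_t\}$ and $\{v_{t-}>\overline{\xi}_t\}$ coincide and the continuous parts $A^c$ of the Mertens decompositions of $v$ and $\tilde v:=v+X$ agree.

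The first key step is the identity: for each $\lambda\in(0,1)$ and each stopping time $S\in\stopo$, $A^c_S = A^c_{\tau_S^\lambda}$ a.s., where $\tau_S^\lambda:=\inf\{t\geq S: \lambda v_t\leq\xi_t\}$. This follows from Lemma \ref{pro_prel}(iii) together with the uniqueness of the Mertens decomposition \eqref{eq_Mertens_decomposition_Y_prel}: applying optional sampling in \eqref{eq_Mertens_decomposition_Y_prel} between $S$ and $\tau_S^\lambda$ and using that $v$ is a martingale on this stochastic interval forces the non-decreasing finite-variation part of $v$ to remain constant there, hence $A_{\tau_S^\lambda}=A_S$ and in particular $A^c_{\tau_S^\lambda}=A^c_S$.

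The heart of the argument is a sample-path claim. Fix $\lambda\in(0,1)$. Applying the previous identity with $S$ equal to each rational $q\in[0,T]$ (a countable family) and intersecting the resulting null sets, I obtain, for a.e.\ $\omega$, $A^c_q(\omega)=A^c_{\tau_q^\lambda(\omega)}(\omega)$ for every rational $q$. Now consider $t\in L^\lambda(\omega):=\{r: \lambda v_{r-}(\omega)>\overline{\xi}_r(\omega)\}$. Since $r\mapsto v_{r-}$ is left-continuous and $r\mapsto \overline{\xi}_r$ is left-upper-semicontinuous, $r\mapsto \lambda v_{r-}-\overline{\xi}_r$ is left-lower-semicontinuous, and its strict positivity at $t$ yields a $\delta>0$ such that $\lambda v_s(\omega)>\xi_s(\omega)$ for all $s\in(t-\delta,t)$. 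In particular, $\tau_q^\lambda(\omega)\geq t$ for every rational $q\in(t-\delta,t)$, so by the monotonicity of $A^c$ and the identity just recalled, $A^c_t(\omega)=A^c_q(\omega)$; letting $q\downarrow t-\delta$ along rationals and using continuity of $A^c$ gives $dA^c(\omega)\bigl((t-\delta,t]\bigr)=0$.

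A short measure-theoretic lemma then concludes: any atomless Borel measure $\mu$ on $[0,T]$ gives zero mass to any set $L$ each of whose points admits a left-neighborhood of $\mu$-measure zero, because such $L$ is contained in the union of the complement of the support $S_\mu$ of $\mu$ and the set of left-isolated points of $S_\mu$, the latter being at most countable and hence $\mu$-null. Applied with $\mu=dA^c(\omega)$ and $L=L^\lambda(\omega)$, this gives $\int_0^T{\bf 1}_{L^\lambda}(t)\,dA^c_t=0$ a.s. Letting $\lambda\uparrow 1$ along rationals and using $v\geq 0$ to identify $\bigcup_{\lambda\in(0,1)\cap\Q}L^\lambda=\{v_{t-}>\overline{\xi}_t\}$, monotone convergence completes the proof. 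I expect the main obstacle to be this measure-theoretic step: the intervals $(t-\delta,t]$ supplied by the sample-path argument are only half-open, so a direct countable subcover is unavailable, and the support-based argument sketched above is the cleanest way around this obstruction.
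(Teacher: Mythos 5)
Your proof is correct and follows essentially the same route as the paper's: the identity $A^c_S=A^c_{\tau_S^\lambda}$ deduced from the martingale property in Lemma \ref{pro_prel}(iii), the pointwise observation that $v_{t-}>\overline{\xi}_t$ forces $A^c$ to be constant on a left-neighbourhood $(t-\delta,t]$ (via $\lambda v_s>\xi_s$ on $(t-\delta,t)$ and hence $\tau_q^\lambda\geq t$ for rational $q$ there), and an elementary measure-theoretic conclusion. The only cosmetic differences are that you fix $\lambda$ first and take a countable union over rational $\lambda$ at the end, whereas the paper chooses $\lambda$ pointwise, and that you package the final step via the support of $dA^c(\omega)$ rather than the paper's explicit decomposition of the flat set into disjoint intervals $(\alpha_i,\beta_i]$ --- these are the same elementary fact.
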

\dproof
 As for the discontinuous part of $A$, the proof is based on Lemma \ref{pro_prel} (iii)
, and  also on some analytic arguments similar 
to those used in the proof of Theorem D13 in Karatzas and Shreve (1998) (\cite{KS2}). 

We have to show that 
$\int_0^T ({ v}_{t-} - \overline{\xi}_{t}) dA^c_t = 0$  a.s. \\
Lemma \ref{pro_prel} (iii) yields that
 for each 
$S \in  {\cal T}_{0,T}$ and for each $\lambda \in ]0,1[$, we have
$
A_S= A_{\tau_S^{\lambda}} \quad {\rm a.s.}
$
Without loss of generality, we can assume that for each $\omega$, 
the map $t\mapsto A^c_t(\omega)$ is continuous, that the map $t\mapsto { v}_t(\omega)$ is  left-limited, and that, for all $\lambda \in ]0,1[ \cap  \Q$ and 
$t \in  [0, T[ \cap \Q$, we have $A_t(\omega) = A_{\tau_t^{\lambda}} (\omega)$.

Let us denote by $\mathcal{J}(\omega)$ the set on which the nondecreasing function $t \mapsto A^c_t(\omega)$ is ``flat'':
\begin{equation*}\label{Jo}
\mathcal{J}(\omega) : = \{t \in ]0,T[ \, , \,\, \exists \delta>0\,\,\, \mbox{with} \,\,\, A^c_{t -\delta} (\omega)= A^c_{t +\delta}(\omega)  \}
\end{equation*}
The set $\mathcal{J}(\omega)$ is clearly open and hence can be written as a countable union of disjoint intervals: 
${\displaystyle \mathcal{J}(\omega)  = \cup_i ]\alpha_i (\omega), \beta_i(\omega)[}$.
We consider 
\begin{equation}\label{definitionhatJ}
\mathcal{\hat J}(\omega) :  = \cup_i ]\alpha_i(\omega), \beta_i(\omega)] = \{t \in ]0,T] \, , \,\, \exists \delta>0\,\,\, \mbox
{with} \,\,\, A^c_{t -\delta} (\omega)= A^c_{t}(\omega)  \}.
\end{equation}
We have $\int_0^T {\bf 1} _{\mathcal{\hat J}(\omega) } dA^c_t (\omega)= 
\sum_{i} (A^c_ {\beta_i(\omega) }(\omega)- A^c_ {\alpha_i(\omega) }(\omega)) =0.$ Hence, 
the nondecreasing function $t \mapsto A^c_t(\omega)$ is  ``flat'' 
on $\mathcal{\hat J}(\omega)$.
We now introduce 
\[\mathcal{K}(\omega):=\{t \in  ]0,T] \,\,
{\rm s.t.} \,\, { v}_{t-} (\omega) > \overline\xi_{t}(\omega)\} \]
We next show that for almost every $\omega$,
$\mathcal{K}(\omega) \subset \mathcal{\hat J}(\omega),$
which clearly provides the desired result.
Let $t \in \mathcal{K}(\omega)$. Let us prove that $t \in  \mathcal{\hat J}(\omega)$. By
\eqref{definitionhatJ}, we thus have to show that there exists $\delta >0$ such that 
$A^c_{t -\delta} (\omega)= A^c_t (\omega)$.
Since $t \in \mathcal{K}(\omega)$, we have ${ v}_{t-} (\omega) > \overline\xi_{t}(\omega)$. Hence, 
there exists $\delta >0$ and $\lambda \in ]0,1[ \cap  \Q$ such that $t-\delta \in  [0, T[ \cap \Q$ and
for each $ r \in [t-\delta, t[$, $\lambda { v}_r(\omega) > \xi_r(\omega) $.
By definition of $\tau_{t- \delta}^{\lambda} (\omega)$, it follows that
 $\tau_{t- \delta}^{\lambda} (\omega) \geq t$. 
Now, we have $A^c_{\tau_{t- \delta}^{\lambda}} (\omega)=A^c_{t -\delta} (\omega)$. Since 
the map $s \mapsto A^c_s (\omega)$ is nondecreasing, we get  $A^c_t (\omega)=A^c_{t -\delta} (\omega)$, which implies that $t \in  \mathcal{\hat J}(\omega)$.
We thus have $\mathcal{K}(\omega) \subset \mathcal{\hat J}(\omega)$, which completes the proof.
\fproof
\begin{Remark}\label{Rk_minimal}
We note  that the martingale property from assertion $(iii)$ of Lemma \ref{pro_prel} is crucial for the proof of the minimality  conditions for the process $A$  (namely, for the proofs of Lemma \ref{propertiesvalue_prel} assertion $(iii)$, and for Lemma \ref{propertiesvalue_Ac_prel}). 
\end{Remark}


\subsection{The classical linear optimal stopping problem with an additional instantaneous reward} 

In this subsection, we extend the previous results to the case where, besides the reward process  $\xi$,  there is an additional  running (or instantaneous) reward process $f\in\H^2$. More precisely, let $(\xi_t)_{t\in[0,T]}$ be a 
 process belonging to ${\cal S}^2$, called the {\em reward process} or the {\em pay-off process}.
Let $f=(f_t)_{t\in[0,T]}$ be a predictable process with $E[\int_0^T f_t^2dt] < + \infty$, called the {\em instantaneous reward process}. For each $S \in \stopo$,  we define the value  $ V  (S)$ at time $S$ by
\begin{eqnarray}\label{ddeux-2}
 V  (S):= \esssup_{\tau \in \stops} E[ \xi_{\tau} + \int_S^\tau f_u du \mid \Fc_S].
\end{eqnarray} 
This is equivalent to 

\begin{eqnarray}
 V  (S)+\int_0^S f_u du:= \esssup_{\tau \in \stops} E[ \xi_{\tau} + \int_0^\tau f_u du \mid \Fc_S].
\end{eqnarray}
Hence, the results of the previous subsection can be applied  with $\xi_\cdot$ replaced by $\xi_\cdot+\int_0^\cdot f_udu$  and $v(S)$ replaced by $V(S)+  \int_0^S f_udu$. Here is a brief summary. 
\begin{Lemma} \label{pro}


 \begin{description}
 \item[(i)]  There exists a ladlag optional process $( V_t)_{t\in[0,T]}$ which aggregates the family 
 $(V(S))_{S\in\stopo}$ (i.e. $ V_S= V (S)$ a.s. for all $S\in\stopo$).\\
Moreover, the process $( V_t + \int_0^t f_u du)_{t\in[0,T]}$ is the smallest strong supermartingale
 greater than or equal to $( \xi_t + \int_0^t f_u du)_{t\in[0,T]}$.
 \item[(ii)] We have 
$ V_S=\xi_S\vee  V_{S+}$ a.s. 
   for all $S\in  {\cal T}_{0,T}$.
 \end{description} 
 \end{Lemma}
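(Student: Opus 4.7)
The plan is to reduce Lemma~\ref{pro} to Lemma~\ref{pro_prel} by absorbing the instantaneous reward into the payoff process. Set $F_t := \int_0^t f_u du$; since $f \in \H^{2}$, the Cauchy--Schwarz inequality gives $E[\sup_{t\leq T} F_t^2] \leq T\, E[\int_0^T f_u^2 du] < \infty$, so $F$ is a continuous adapted process with $L^2$-integrable supremum. Define the shifted payoff $\tilde{\xi}_t := \xi_t + F_t$, which belongs to ${\cal S}^{2}$ by the triangle inequality for $\vertiii{\cdot}_{{\cal S}^{2}}$.

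Next I would relate $V$ to the value family associated to $\tilde{\xi}$. Because $F_S$ is $\Fc_S$-measurable, for every $\tau \in \stops$ one has $E[\tilde{\xi}_\tau \mid \Fc_S] = E[\xi_\tau + \int_S^\tau f_u du \mid \Fc_S] + F_S$, whence
\begin{equation*}
\tilde v(S) \, := \, \esssup_{\tau\in\stops} E[\tilde{\xi}_\tau \mid \Fc_S] \, = \, V(S) + F_S \quad \text{a.s.}
\end{equation*}
Applying Lemma~\ref{pro_prel} to the payoff $\tilde{\xi}$ produces a ladlag optional process $(\tilde v_t)_{t\in[0,T]}$ aggregating the family $(\tilde v(S))_{S\in\stopo}$, which moreover is the smallest strong supermartingale dominating $\tilde{\xi}$. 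Setting $V_t := \tilde v_t - F_t$ defines a ladlag optional process (since $F$ is continuous adapted) that aggregates $(V(S))_{S\in\stopo}$, because $V_S = \tilde v_S - F_S = \tilde v(S) - F_S = V(S)$ a.s. The second part of (i) is then tautological: by construction $V_t + F_t = \tilde v_t$ is the smallest strong supermartingale $\geq \xi_t + F_t$.

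For (ii), Lemma~\ref{pro_prel}(ii) applied to $\tilde v$ yields $\tilde v_S = \tilde{\xi}_S \vee \tilde v_{S+}$ a.s. for all $S \in \stopo$. Continuity of $F$ gives $F_{S+} = F_S$, so $\tilde v_{S+} = V_{S+} + F_S$ and $\tilde{\xi}_S = \xi_S + F_S$; subtracting the common value $F_S$ from both sides of the identity produces the desired $V_S = \xi_S \vee V_{S+}$ a.s.

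I do not expect a genuine obstacle here, as the reduction is formally routine. The only points demanding care are verifying $\tilde{\xi} \in {\cal S}^{2}$ (so that Lemma~\ref{pro_prel} is applicable) and using the continuity of $F$ to commute right-limits with the subtraction $V = \tilde v - F$; continuity of the running reward term is precisely what makes this translation trick work cleanly.
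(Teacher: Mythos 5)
Your proposal is correct and follows exactly the paper's route: the paper itself reduces Lemma \ref{pro} to Lemma \ref{pro_prel} by noting that $V(S)+\int_0^S f_u\,du$ is the classical value family for the translated payoff $\xi_\cdot+\int_0^\cdot f_u\,du$, and your write-up merely fills in the (routine) verifications — $\tilde{\xi}\in{\cal S}^2$ and the continuity of $F$ allowing the right-limit to pass through the subtraction — that the paper leaves implicit.
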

 

\begin{Remark}\label{sautY}
It follows from $(ii)$ in the above lemma that 
$\Delta_+  V_{S}= {\bf 1}_{\{ V_S= \xi_S\}}\Delta_+  V_{S}$ a.s.
\end{Remark}

\begin{Lemma}\label{propertiesvalue}  \begin{description}
 \item[(i)]  
The value  process ${ V}$ of Lemma \ref{pro} 
 belongs to ${\cal S}^{2}$ and admits the following (Mertens) decomposition:
\begin{equation}\label{eq_Mertens_decomposition_Y}
{ V}_t=V_0-\int_0^t f_u du+M_t-A_t-C_{t-}, \text{ for all $t\in[0,T]$ a.s.}, 
\end{equation}
where $M \in {\cal M}^2$, $A$ is a nondecreasing right-continuous predictable process 
such that $ A_0= 0$, $E(A_T^2)<\infty$, and $C$ is a nondecreasing right-continuous adapted purely discontinuous process such that  $C_{0-}= 0$, $E(C_T^2)<\infty$. 
 \item[(ii)]  
For each $\tau \in\stopo$, we have
 $\Delta C_{\tau}= {\bf 1}_{\{{ V}_\tau= \xi_\tau\}}\Delta C_{\tau}$ a.s.\,
 \item[(iii)]  For each predictable  $\tau \in\stopo$, we have 
$\Delta A_{\tau}= {\bf 1}_{\{{ V}_{\tau-}= \, \overline{\xi}_{\tau}\}}\Delta A_{\tau}$ a.s.\,
\end{description}
\end{Lemma}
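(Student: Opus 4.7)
The plan is to deduce everything from Lemma \ref{propertiesvalue_prel} by the same change of variables already used in Lemma \ref{pro} to pass from $v$ to $V$. Set $\tilde\xi_t := \xi_t + \int_0^t f_u du$ and $\tilde v(S) := V(S) + \int_0^S f_u du$. Since $f \in \H^{2}$, Cauchy--Schwarz gives $\sup_{t \in [0,T]} |\int_0^t f_u du|^2 \leq T \int_0^T f_u^2 du$, so $\int_0^\cdot f_u du$ lies in ${\cal S}^2$ and hence $\tilde\xi \in {\cal S}^2$. By Lemma \ref{pro}(i), $(\tilde v_t)$ aggregates $(\tilde v(S))_{S \in \stopo}$ and is the smallest strong supermartingale dominating $\tilde\xi$, so Lemma \ref{propertiesvalue_prel} applies to $\tilde v$ and $\tilde\xi$.

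Lemma \ref{propertiesvalue_prel}(i) then yields $\tilde v \in {\cal S}^2$ together with a Mertens decomposition $\tilde v_t = \tilde v_0 + M_t - A_t - C_{t-}$, where $M \in {\cal M}^2$, $A$ is predictable nondecreasing right-continuous with $A_0 = 0$ and $E[A_T^2] < \infty$, and $C$ is adapted nondecreasing right-continuous purely discontinuous with $C_{0-} = 0$ and $E[C_T^2] < \infty$. Subtracting the continuous finite-variation process $\int_0^t f_u du$ from both sides gives $V_t = V_0 - \int_0^t f_u du + M_t - A_t - C_{t-}$, and $V \in {\cal S}^2$ follows since both $\tilde v$ and $\int_0^\cdot f_u du$ are in ${\cal S}^2$. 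This proves (i).

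For (ii), the equality $\tilde v_\tau - \tilde\xi_\tau = V_\tau - \xi_\tau$ for every $\tau \in \stopo$ shows that the events $\{V_\tau = \xi_\tau\}$ and $\{\tilde v_\tau = \tilde\xi_\tau\}$ coincide, and Lemma \ref{propertiesvalue_prel}(ii) applied to $\tilde v$ and $\tilde\xi$ delivers the identity for $\Delta C_\tau$. For (iii), the continuity of $t \mapsto \int_0^t f_u du$ is the key input: it yields $\overline{\tilde\xi}_\tau = \overline\xi_\tau + \int_0^\tau f_u du$ and $\tilde v_{\tau-} = V_{\tau-} + \int_0^\tau f_u du$, so that $\{\tilde v_{\tau-} = \overline{\tilde\xi}_\tau\} = \{V_{\tau-} = \overline\xi_\tau\}$ for every predictable $\tau \in \stopo$; Lemma \ref{propertiesvalue_prel}(iii) applied to $\tilde v$ then concludes. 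No genuine obstacle is expected; the only point that must be tracked is that the added drift $\int_0^\cdot f_u du$ is continuous and of finite variation, so that it contributes neither to the jump processes $A$ and $C$ nor to the martingale part $M$, and so that the operation $\xi \mapsto \overline{\xi}$ commutes with the additive shift by this continuous process.
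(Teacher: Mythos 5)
Your proposal is correct and follows exactly the route the paper intends: Lemma \ref{propertiesvalue} is obtained from Lemma \ref{propertiesvalue_prel} by the substitution $\xi_\cdot \mapsto \xi_\cdot + \int_0^\cdot f_u\,du$, $v(S)\mapsto V(S)+\int_0^S f_u\,du$ announced at the start of the subsection, and your verifications (that the shift is in ${\cal S}^2$, that it is continuous and of finite variation so it affects neither $A$, $C$, nor the contact sets, and that it commutes with $\xi\mapsto\overline{\xi}$) are precisely the details the paper leaves implicit.
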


\begin{Lemma}\label{propertiesvalue_Ac}
The continuous part $A^c$ of $A$ satisfies the equality $\int_0^T {\bf 1}_{\{{ V}_{t-} > \overline{\xi}_{t}\}} dA^c_t = 0$ a.s.\, 
\end{Lemma}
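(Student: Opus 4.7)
The plan is to reduce the statement to the already-established Lemma \ref{propertiesvalue_Ac_prel} (the version without running reward) by a simple translation argument. Concretely, I would introduce the shifted processes
\[
\tilde{\xi}_t := \xi_t + \int_0^t f_u\,du, \qquad \tilde{V}_t := V_t + \int_0^t f_u\,du,
\]
and observe, from \eqref{ddeux-2}, that $\tilde{V}_S = \esssup_{\tau\in\stops} E[\tilde{\xi}_\tau\mid\cf_S]$ a.s.\ for every $S\in\stopo$. In other words, $\tilde V$ is exactly the value process, in the sense of Subsection \ref{subsection_linear}, of the classical optimal stopping problem with reward $\tilde{\xi}$.

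The key observation is that $t\mapsto\int_0^t f_u\,du$ is continuous and of finite variation, so it has no left or right jumps and its running supremum limit from the left coincides with itself. Consequently,
\[
\tilde{V}_{t-} = V_{t-} + \int_0^t f_u\,du, \qquad \overline{\tilde{\xi}}_{t} = \overline{\xi}_{t} + \int_0^t f_u\,du,
\]
which gives the pointwise identity of events
\[
\{\tilde{V}_{t-} > \overline{\tilde{\xi}}_{t}\} = \{V_{t-} > \overline{\xi}_{t}\}.
\]
Moreover, comparing the Mertens decomposition \eqref{eq_Mertens_decomposition_Y} of $V$ with \eqref{eq_Mertens_decomposition_Y_prel} applied to $\tilde{V}$, we see that
\[
\tilde{V}_t = V_0 + M_t - A_t - C_{t-},
\]
so the predictable nondecreasing process in the Mertens decomposition of $\tilde{V}$ is \emph{the same} process $A$ as the one appearing in the decomposition of $V$; in particular, its continuous part is the same $A^c$.

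It then suffices to apply Lemma \ref{propertiesvalue_Ac_prel} to $\tilde{V}$ and $\tilde{\xi}$, which yields
\[
\int_0^T {\bf 1}_{\{\tilde V_{t-} > \overline{\tilde\xi}_t\}}\, dA^c_t = 0 \quad \text{a.s.},
\]
and, via the set identity above, this is exactly the claimed equality. There is essentially no obstacle in this reduction; the only point meriting care is verifying that shifting by the absolutely continuous process $\int_0^\cdot f_u\,du$ leaves the relevant left limits and upper left envelopes compatible, and that the predictable part of the Mertens decomposition is unaffected (only the martingale part changes). If one preferred a self-contained argument, one could instead mimic directly the proof of Lemma \ref{propertiesvalue_Ac_prel}, using the analogue of Lemma \ref{pro_prel}(iii) for $V+\int_0^\cdot f_u du$ (which holds by the same Maingueneau penalization, since the problem is still convex after the shift) together with the analytic argument showing $\{V_{t-}>\overline\xi_t\}\subset \mathcal{\hat J}(\omega)$; but the translation approach is shorter and cleaner.
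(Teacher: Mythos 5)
Your translation argument is exactly how the paper obtains this lemma: Subsection 3.2 explicitly states that the results of Subsection 3.1 are applied with $\xi_\cdot$ replaced by $\xi_\cdot+\int_0^\cdot f_u\,du$ and $v(S)$ replaced by $V(S)+\int_0^S f_u\,du$, and Lemma \ref{propertiesvalue_Ac} is just Lemma \ref{propertiesvalue_Ac_prel} read through that shift. Your verification that the continuity of $\int_0^\cdot f_u\,du$ preserves the left limits, the upper left envelope, and the predictable part of the Mertens decomposition is correct and fills in the (unstated) details of the paper's reduction.
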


\subsection{Characterization of the value function as the solution of an RBSDE}\label{subsect_stop2}
In this subsection, 
we show, using the above lemmas, that the value process $V$ of the classical  optimal stopping problem  \eqref{ddeux-2} solves the RBSDE from Definition \ref{def_solution_RBSDE} with parameters  the driver process $(f_t)$ and the obstacle $(\xi_t)$. We also 
prove the uniqueness of the solution of this RBSDE.
 To this aim, we first provide  \emph{a priori} estimates for RBSDEs in our general framework. 
\begin{Lemma}[A priori estimates]\label{Lemma_estimate}
Let $(Y^1, Z^1, k^1, h^1, A^1, C^1)$ (resp. $(Y^2, Z^2, k^2, h^2, A^2, C^2)$) $\in {\cal S}^2 \times \H^2 \times \H^2_\nu \times {\cal M}^{2, \bot} \times {\cal S}^2\times {\cal S}^2$  be  a  solution to the RBSDE  associated with  driver $f^1(\omega, t)$ (resp. $f^2(\omega, t)$) and with obstacle $\xi$. We  set $\tilde Y:=Y^1-Y^2$, $\tilde Z:=Z^1-Z^2$, $\tilde A:=A^1-A^2$, $\tilde C:=C^1-C^2$,  $\tilde k:=k^1-k^2$, $\tilde h:=h^1-h^2$, and $\tilde f(\omega, t):=f^1(\omega, t)-f^2(\omega, t)$. There exists $c>0$ such that for all $\varepsilon>0$,  for all $\beta\geq\frac 1 {\varepsilon^2}$ we have 
\begin{align}\label{eq_initial_Lemma_estimate}
\|\tilde Z\|^2_\beta\leq \varepsilon^2\|\tilde f\|^2_\beta, \,\, \|\tilde k\|^2_{\nu,\beta}\leq \varepsilon^2\|\tilde f\|^2_\beta  \text{  and  } \|\tilde h \|^2_{\beta, {\cal M}^2} \leq \varepsilon^2\|\tilde f\|^2_\beta.  
\\
 \vvertiii{\tilde Y}^2_\beta \leq 4\varepsilon^2(1+12c^2) \|\tilde f\|^2_\beta.\label{deuxeq}
\end{align}
\end{Lemma}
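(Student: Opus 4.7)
The plan is to exploit the fact that the drivers $f^i(\omega,t)$ depend only on $(\omega,t)$, so that the difference $\tilde Y$ satisfies the linear equation
\[
-d\tilde Y_t = \tilde f(t)\,dt + d\tilde A_t + d\tilde C_{t-} - \tilde Z_t\,dW_t - \int_E \tilde k_t(e)\,\tilde N(dt,de) - d\tilde h_t,
\]
with terminal condition $\tilde Y_T = 0$ (since $Y^1_T = Y^2_T = \xi_T$). I would then apply a Gal'chouk-Lenglart type change-of-variable formula to the ladlag semimartingale $\tilde Y$ with the convex function $x \mapsto x^2$ and the exponential weight $e^{\beta t}$, and take expectations.

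The core of the argument consists in discarding the reflecting increments with the correct sign. For the continuous part of the reflection, the Skorokhod condition $\int_0^T \mathbf{1}_{\{Y^1_{t-} > \overline{\xi}_t\}}\,dA^{1,c}_t = 0$ restricts $dA^{1,c}$ to the set where $Y^1_{t-} = \overline{\xi}_t$; since $Y^2$ is ladlag with $Y^2_t \geq \xi_t$, its left limit satisfies $Y^2_{t-} \geq \overline{\xi}_t$, and hence $\tilde Y_{t-} \leq 0$ on the support of $dA^{1,c}$. A symmetric inequality holds on the support of $dA^{2,c}$, giving $\int e^{\beta s}\tilde Y_{s-}\,d\tilde A^c_s \leq 0$. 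The predictable-jump part of \eqref{rRBSDE_A} treats the jumps of $A$ by the same argument (replacing $\overline{\xi}_\tau$ by itself), while \eqref{rRBSDE_C} forces $Y^i_\tau = \xi_\tau$ at every jump time of $C^i$, whence $\int e^{\beta s}\tilde Y_s\,d\tilde C_{s-} \leq 0$ as well.

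Once the reflecting contributions are discarded and expectation is taken, the three martingale pieces vanish thanks to the mutual orthogonality of $W$, the compensated Poisson measure, and $\tilde h \in \mathcal{M}^{2,\bot}$, and the change-of-variable formula yields
\[
\mathbb{E}\!\left[\int_0^T e^{\beta s}\bigl(\beta |\tilde Y_s|^2 + |\tilde Z_s|^2 + \|\tilde k_s\|_\nu^2\bigr)ds + \int_0^T e^{\beta s}\,d[\tilde h]_s\right] \leq 2\,\mathbb{E}\!\left[\int_0^T e^{\beta s}\,\tilde Y_{s-}\,\tilde f(s)\,ds\right].
\]
Young's inequality $2|ab| \leq \beta a^2 + b^2/\beta$ on the right-hand side, combined with $\beta \geq 1/\varepsilon^2$, absorbs the $|\tilde Y|^2$ term on the left and delivers at once the three estimates in \eqref{eq_initial_Lemma_estimate}. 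For the $\vvertiii{\cdot}$-bound \eqref{deuxeq}, I would take the essential supremum of $e^{\beta\tau}|\tilde Y_\tau|^2$ over $\tau\in\stopo$ in the integrated Gal'chouk-Lenglart formula, apply the Burkholder-Davis-Gundy inequality (with universal constant $c$) to the three orthogonal martingale parts, and control their brackets by the bounds already obtained; the non-martingale pieces are dominated via Cauchy-Schwarz on $\tilde Y\tilde f$, producing the constant $4(1+12c^2)$ after one further use of Young's inequality.

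The main obstacle I anticipate is the correct application of Gal'chouk-Lenglart's formula in the present ladlag framework, together with a careful bookkeeping of the left jumps (from $A$, from the compensated Poisson part, and from $\tilde h$) against the right jumps (from the purely discontinuous process $C$). One must verify that each of these jump contributions either vanishes in expectation or carries the correct sign imposed by the Skorokhod conditions \eqref{rRBSDE_A} and \eqref{rRBSDE_C}; this is precisely the point where the complete irregularity of $\xi$, rather than its r.u.s.c.\ counterpart, forces a genuinely new argument.
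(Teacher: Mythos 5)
Your proposal follows essentially the same route as the paper's proof: apply the Gal'chouk--Lenglart change-of-variable formula to $\e^{\beta t}\tilde Y_t^2$, discard the reflection increments by combining the Skorokhod conditions with $Y^2_{s-}\geq \overline{\xi}_s$ (resp. $Y^2_s\geq \xi_s$), kill the cross and martingale terms in expectation via the orthogonality of $W$, $\tilde N$ and $\tilde h\in{\cal M}^{2,\bot}$, absorb $\beta\tilde Y^2$ with Young's inequality and $\beta\geq 1/\varepsilon^2$, and finish the $\vvertiii{\cdot}_\beta$ bound with Burkholder--Davis--Gundy applied to the three martingale parts. The steps you flag as delicate (the bookkeeping of left and right jumps, in particular relating $\sum(\Delta\tilde Y_s)^2$ to $\int\|\tilde k_s\|_\nu^2\,ds$ and $[\tilde h]$) are precisely where the paper invests its effort, and your sketch is consistent with its treatment.
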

\dproof The proof is given in the Appendix.
\fproof

 Using these {\em a priori} estimates, the lemmas from the previous subsection, and the orthogonal martingale decomposition  (Lemma \ref{theoreme representation}), we derive the following "infinitesimal characterization" of the value process $V$.
%

\begin{Theorem}\label{rf}
Let $V$ be the value process of the optimal stopping problem \eqref{ddeux-2}. 
Let $A$ and $C$ be the non decreasing processes associated with the Mertens decomposition \eqref{eq_Mertens_decomposition_Y} of $V$.
There exists a unique triplet $(Z,k,h)\in  \H^2 \times \H^2_\nu\times {\cal M}^{2, \bot}$ such that the process $(V,Z,k,h,A,C)$ is a solution of 
the RBSDE from Definition \ref{def_solution_RBSDE} associated with the driver process $f(\omega,t, y,z,\mathpzc{k}) = f_t(\omega)$ and the obstacle $(\xi_t)$. Moreover, the solution of this RBSDE is unique.
\end{Theorem}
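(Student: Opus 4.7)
The plan is to construct the candidate solution of the RBSDE directly from the Mertens decomposition of $V$, then verify the Skorokhod conditions by invoking the lemmas of the previous subsection, and finally deduce uniqueness from the \emph{a priori} estimate (Lemma \ref{Lemma_estimate}) combined with the uniqueness of the Mertens decomposition.

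For existence, I would start from Lemma \ref{propertiesvalue}(i), which gives
\[ V_t = V_0 - \int_0^t f_u\,du + M_t - A_t - C_{t-}, \]
with $M \in \mathcal{M}^2$ and $A, C$ satisfying the path regularity and integrability required by Definition \ref{def_solution_RBSDE}. Applying the orthogonal martingale representation (Lemma \ref{theoreme representation}) to $M$ produces a unique triplet $(Z, k, h) \in \H^2 \times \H^2_\nu \times \mathcal{M}^{2,\bot}$ such that
\[ M_t = \int_0^t Z_s\,dW_s + \int_0^t \int_E k_s(e)\,\tilde N(ds,de) + h_t. \]
Substituting this back and rewriting the equation from $t$ to $T$ gives precisely the integral equation \eqref{RBSDE} with the driver $f(s,y,z,\mathpzc{k}) = f_s$. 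The remaining requirements in Definition \ref{def_solution_RBSDE} are then read off directly from the preceding results: the terminal condition $V_T = \xi_T$ and the obstacle inequality $V_t \geq \xi_t$ follow from the definition of the value process and from Lemma \ref{pro}(i); the Skorokhod condition on $\Delta C$ is Lemma \ref{propertiesvalue}(ii); the Skorokhod condition on $\Delta A$ at predictable stopping times is Lemma \ref{propertiesvalue}(iii); and the Skorokhod condition on $A^c$ is Lemma \ref{propertiesvalue_Ac}.

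For uniqueness, I would take two solutions $(V^i, Z^i, k^i, h^i, A^i, C^i)$, $i = 1, 2$, sharing the same (solution-independent) driver $f_\cdot$ and obstacle $\xi$. Applying Lemma \ref{Lemma_estimate} with $\tilde f \equiv 0$ forces $V^1 = V^2$, $Z^1 = Z^2$, $k^1 = k^2$ and $h^1 = h^2$. Subtracting the two RBSDE equations then yields $A^1_t + C^1_{t-} = A^2_t + C^2_{t-}$ for all $t$ a.s., meaning the two candidate Mertens decompositions of the strong supermartingale $V + \int_0^\cdot f_u\,du$ coincide; the uniqueness of the Mertens decomposition for class (D) supermartingales (\cite[Appendix 1, Thm.20]{DM2}) then gives $A^1 = A^2$ and $C^1 = C^2$.

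All the real work — aggregation, Mertens decomposition and, crucially, the subtle Skorokhod condition on the continuous part $A^c$ (Lemma \ref{propertiesvalue_Ac}) — has already been carried out in the previous subsection, so the main task here is of a bookkeeping nature: matching each defining condition of Definition \ref{def_solution_RBSDE} with the corresponding lemma. The only point I would handle with a little extra care is the uniqueness step, where one must invoke the uniqueness of the Mertens decomposition to separate the predictable right-continuous part $A^i$ from the adapted purely discontinuous part $C^i$; this is however standard for class (D) supermartingales, so no new ingredient is required.
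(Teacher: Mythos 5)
Your proposal is correct and follows essentially the same route as the paper: Mertens decomposition of $V$ (Lemma \ref{propertiesvalue}), orthogonal martingale representation (Lemma \ref{theoreme representation}) to produce $(Z,k,h)$, the Skorokhod conditions read off from Lemmas \ref{propertiesvalue}(ii)--(iii) and \ref{propertiesvalue_Ac}, and uniqueness via the a priori estimates of Lemma \ref{Lemma_estimate}. The only difference is that you spell out the final separation of $A$ from $C$ via uniqueness of the Mertens decomposition, a step the paper delegates to the ``classical arguments'' of step 5 of Lemma 3.3 in \cite{MG}; your version is a valid (and slightly more explicit) rendering of the same argument.
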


\dproof 
By Lemma \ref{pro} (ii), the value   process $V$ corresponding to the optimal stopping problem \eqref{ddeux-2} satisfies  ${ V}_T= { V} (T)= \xi_T\,$ a.s.  
and ${ V}_t\geq \xi_t,$ $0 \leq t \leq T$, a.s. 
By Lemma \ref{propertiesvalue}  (ii), the process $C$ of the Mertens decomposition of $ V$ 
\eqref{eq_Mertens_decomposition_Y}
 satisfies the minimality condition \eqref{rRBSDE_C}.
Moreover, by Lemma \ref{propertiesvalue} (iii) and Lemma \ref{propertiesvalue_Ac},
 the process $A$ satisfies the minimality condition 
\eqref{rRBSDE_A}. 
By Lemma \ref{theoreme representation}, 
 there exists a unique triplet
$(Z,k,h) \in \H^{2} \times \H_{\nu}^{2} \times {\cal M}^{2, \bot}$ such that
$dM_t = Z_t  dW_t + \int_{ E} k_t(e) \tilde{N}(dt,de)+dh_t.$ 
The process $(V,Z,k,h,A,C)$  is thus a solution of the RBSDE
\eqref{def_solution_RBSDE} associated with the driver process $(f_t)$ and the obstacle $\xi$. 

It remains to show the uniqueness  of the solution. 
 Using the \textit{a priori estimates} from  Lemma \ref{Lemma_estimate}, together with classical arguments  
 (cf. step 5 of the proof of Lemma 3.3 in \cite{MG}), we obtain the desired result.
\fproof

We are  interested in generalizing this result to the case of the optimal stopping problem \eqref{eq_intro} with \emph{non-linear} $f$-expectation  (associated with a non-linear driver $f(\omega, t,y,z,\mathpzc{k})$).
To this purpose, we first establish an existence and uniqueness result for the RBSDE from Definition \ref{def_solution_RBSDE} in the case of a general (non-linear) Lipschitz driver $f(\omega, t,y,z,\mathpzc{k})$. 

\section{Existence and uniqueness of the solution of the RBSDE with an irregular obstacle and a general filtration in the case of a general driver}\label{subsect_RBSDEa}
In Theorem \ref{rf}, we have shown that, in the case where the driver does not depend on $y,z,$ and $\mathpzc{k}$, the RBSDE from Definition \ref{def_solution_RBSDE} admits a unique solution. 
Using this result together with the above {\em a priori } estimates from Lemma \ref{Lemma_estimate},  we derive the following existence and uniqueness result in the case of a general Lipschitz driver $f(t,y,z,k)$. 
\begin{Theorem}[Existence and uniqueness]\label{rexiuni}
Let  $\xi$  be a 
 process in $\mathcal{S}^2$ and let $f$ be a  Lipschitz driver. 
The RBSDE with parameters $(f,\xi)$ from Definition \ref{def_solution_RBSDE} admits a unique solution $(Y,Z,k,h,A,C)\in \mathcal{S}^2  \times \H^2 \times \H^2_\nu \times \mathcal{M}^{2,\bot}\times \mathcal{S}^2\times \mathcal{S}^2.$
\end{Theorem}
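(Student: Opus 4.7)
The plan is a Picard fixed-point argument on a suitable $\beta$-weighted Banach space, using Theorem \ref{rf} as the building block for each iteration (it furnishes a solution once the driver is ``frozen'' so as to depend only on $(\omega,t)$) and the a priori estimates of Lemma \ref{Lemma_estimate} to establish the contraction for $\beta$ large enough.

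For fixed $\beta > 0$, equip the product space $\mathcal{B} := \mathcal{S}^2 \times \H^2 \times \H^2_\nu$ with the norm
$$N_\beta(y,z,\mathpzc{k})^2 := \vvertiii{y}_\beta^2 + \|z\|_\beta^2 + \|\mathpzc{k}\|_{\nu,\beta}^2,$$
which makes it a Banach space by Proposition 2.1 of \cite{MG} and the standard completeness of $\H^2$, $\H^2_\nu$. Given $(y,z,\mathpzc{k}) \in \mathcal{B}$, the frozen driver $\tilde f_t(\omega) := f(\omega,t,y_t,z_t,\mathpzc{k}_t)$ belongs to $\H^2$ (by the Lipschitz property of $f$ together with the integrability of $f(\cdot,0,0,0)$), so Theorem \ref{rf} produces a unique sextuple $(Y,Z,k,h,A,C) \in \mathcal{S}^2 \times \H^2 \times \H^2_\nu \times \mathcal{M}^{2,\bot} \times \mathcal{S}^2 \times \mathcal{S}^2$ solving the RBSDE with driver $\tilde f$ and obstacle $\xi$. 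Set $\Phi(y,z,\mathpzc{k}) := (Y,Z,k) \in \mathcal{B}$.

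To establish contraction, take two inputs $(y^i,z^i,\mathpzc{k}^i)$, $i=1,2$, with frozen drivers $\tilde f^i$ and outputs $(Y^i,Z^i,k^i,h^i,A^i,C^i)$. Since the obstacle $\xi$ is common, Lemma \ref{Lemma_estimate} applies and yields, for all $\varepsilon > 0$ and $\beta \geq 1/\varepsilon^2$,
$$\vvertiii{Y^1-Y^2}_\beta^2 + \|Z^1-Z^2\|_\beta^2 + \|k^1-k^2\|_{\nu,\beta}^2 \leq C_0\,\varepsilon^2\,\|\tilde f^1 - \tilde f^2\|_\beta^2,$$
for a universal constant $C_0$. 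The Lipschitz property of $f$ gives pointwise $|\tilde f^1_t - \tilde f^2_t|^2 \leq 3K^2(|y^1_t-y^2_t|^2 + |z^1_t-z^2_t|^2 + \|\mathpzc{k}^1_t-\mathpzc{k}^2_t\|_\nu^2)$; integrating against $e^{\beta t}\,dt$ and using $\|y^1-y^2\|_\beta^2 \leq T\,\vvertiii{y^1-y^2}_\beta^2$ bounds $\|\tilde f^1-\tilde f^2\|_\beta^2$ by a constant multiple of $N_\beta\bigl((y^1,z^1,\mathpzc{k}^1)-(y^2,z^2,\mathpzc{k}^2)\bigr)^2$. Choosing $\varepsilon$ small (hence $\beta$ large) makes $\Phi$ a strict contraction on $(\mathcal{B},N_\beta)$, and Banach's theorem gives a unique fixed point, which together with the associated $(h,A,C)$ from Theorem \ref{rf} provides the desired solution.

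Uniqueness of the full sextuple reduces to the same estimates: any two solutions are fixed points of $\Phi$ for the common induced driver, so their first three components agree; reading off the remaining terms from \eqref{RBSDE} and invoking the uniqueness of the orthogonal martingale decomposition (Lemma \ref{theoreme representation}) together with the uniqueness of a Mertens-type decomposition (separating the predictable right-continuous process $A$ from the purely discontinuous adapted process $C$, as in Definition \ref{def_solution_RBSDE}) forces $h^1=h^2$, $A^1=A^2$ and $C^1=C^2$. The main technical point, and what I would expect to require the most care, is reconciling the mismatch of norms in Lemma \ref{Lemma_estimate} (the strong $\vvertiii{\cdot}_\beta$-norm on $Y$ versus the $\H^2$-type norms on $Z$, $k$ and $\tilde f$) so that the output estimates and the Lipschitz input bounds compose cleanly into one contraction inequality on $N_\beta$.
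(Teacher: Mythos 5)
Your proposal is correct and follows essentially the same route as the paper: a Banach fixed-point argument on $\mathcal{S}^2\times\H^2\times\H^2_\nu$ with the $\beta$-weighted norm, using Theorem \ref{rf} to define the map $\Phi$ for the frozen driver and Lemma \ref{Lemma_estimate} to obtain the contraction for $\beta$ large. The additional details you supply (the Lipschitz bound on the frozen drivers, the norm-reconciliation, and the identification of $h$, $A$, $C$ via the orthogonal and Mertens decompositions) are exactly the ``similar computations'' the paper delegates to the proof of Theorem 3.4 in \cite{MG}.
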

\dproof 
For each $\beta>0$, we denote by $\mathcal{B}_\beta^2$ the Banach space $\mathcal{S}^2 \times \H^2\times \H^2_\nu$ which we  equip with the norm $\|(\cdot,\cdot,\cdot) \|_{\mathcal{B}_\beta^2}$ defined by 
$\| (Y, Z, k)\|_{\mathcal{B}_\beta^2}^2:=\vvertiii {Y}_{\beta}^2  +   \| Z\|_{\beta}^2+\|k \|_{\nu,\beta}^2, $ for  $(Y,Z,k)\in \mathcal{S}^2 \times \H^2\times \H^2_\nu.$ 
%
We define a mapping $\Phi$ from $\mathcal{B}_\beta^2$ into
itself as follows: for a given $(y, z, l) \in \mathcal{B}_\beta^2$, we set $\Phi (y, z, l):= (Y, Z, k)$, where $Y, Z, k$ 
are
the first three components of the solution $(Y, Z, k,h,A,C)$ to the RBSDE  associated with driver process $f(s):= 
f(s, y_s, z_s, l_s)$ and with obstacle $\xi$. 
The mapping $\Phi$ is well-defined by Theorem \ref{rf}. Using the a priori estimates from Lemma \ref{Lemma_estimate} and similar computations as those from the proof of Theorem 3.4 in \cite{MG}, we derive that $\Phi$ is a contraction for the norm $\| \cdot \|_{\mathcal{B}_\beta^2}$. By the fixed point theorem in the Banach space 
$\mathcal{B}_\beta^2$, the mapping $\Phi$ thus admits a unique fixed point, which corresponds to the unique solution of the RBSDE with parameters $(f,\xi)$.
\fproof
\begin{Remark}
In  \cite{nouveau}, the above  existence and uniqueness result  is shown in a Brownian framework by using a penalization method. Our approach  provides an alternative proof of this result. 
\end{Remark}

We now provide a useful property of the solution of an RBSDE, which will be used in the sequel.
\begin{Lemma}[$\mathcal{E}^f$-martingale property of $Y$]\label{tol}
Let $\xi$ be a 
 process in $\mathcal{S}^2$ and let $f$ be a Lipschitz driver. 
Let 
$(Y,Z,k,h,A,C)$ be the solution to the reflected BSDE with parameters $(f,\xi)$ as in Definition \ref{def_solution_RBSDE}. For each $S \in  {\cal T}_{0,T}$ and for each $\varepsilon>0$, we set
 \begin{equation}\label{definitiontau}
 \tau^{\varepsilon}_S:= \inf\{t\geq S \,,  { Y}_t\leq \xi_t + \varepsilon\}.
 \end{equation} 
The process $(Y_t)$ is 
 an ${\cal E}^{f}$-martingale on $[S, \tau^{\varepsilon}_S]$. 
\end{Lemma}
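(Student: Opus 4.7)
The plan is to exploit the Skorokhod (minimality) conditions in Definition \ref{def_solution_RBSDE}: on the stochastic interval $[S,\tau^\varepsilon_S]$ the obstacle $\xi$ is strictly below $Y$ (with a margin $\varepsilon$), so $A$ and $C_-$ must be flat there, and the RBSDE collapses to a genuine BSDE with driver $f$. The $\mathcal{E}^f$-martingale property then follows from the uniqueness of BSDE solutions. Throughout, I use that $Y$ is ladlag: from the equation \eqref{RBSDE} we may write $Y_t = Y_0 - \int_0^t f(s,Y_s,Z_s,k_s)\,ds + M_t - A_t - C_{t-}$ with $M$ cadlag, $A$ right-continuous predictable, and $C_-$ left-continuous, so both $Y_{t-}$ and $Y_{t+}$ exist.

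First I establish two strict inequalities. By the very definition of $\tau^\varepsilon_S$, for every $t\in[S,\tau^\varepsilon_S)$ we have $Y_t > \xi_t + \varepsilon$, in particular $Y_t>\xi_t$. Passing to the left limit, for every $t\in(S,\tau^\varepsilon_S]$ we get
\[
Y_{t-}\;=\;\lim_{s\uparrow t}Y_s\;\geq\;\limsup_{s\uparrow t,\,s<t}(\xi_s+\varepsilon)\;=\;\overline{\xi}_t+\varepsilon\;>\;\overline{\xi}_t.
\]
Second, I apply the Skorokhod conditions \eqref{rRBSDE_A}--\eqref{rRBSDE_C}. Let $\sigma,\rho\in\stopo$ with $S\leq\sigma\leq\rho\leq\tau^\varepsilon_S$. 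The inequality $Y_{t-}>\overline{\xi}_t$ on $(S,\tau^\varepsilon_S]$ forces $\int_{(\sigma,\rho]}\mathbf{1}_{\{Y_{t-}>\overline{\xi}_t\}}\,dA^c_t = A^c_\rho - A^c_\sigma$ and, combined with the jump condition for $A^d$ at predictable times, rules out any predictable jump of $A$ on $(\sigma,\rho]$; thus $A_\rho = A_\sigma$. For $C$: on $[\sigma,\rho)\subset[S,\tau^\varepsilon_S)$ we have $Y_t>\xi_t$, so the Skorokhod condition \eqref{rRBSDE_C} precludes any jump of $C$ there, giving $C_{\rho-}=C_\sigma$; moreover $Y_\sigma > \xi_\sigma$ (since $\sigma\in[S,\tau^\varepsilon_S)$ in the non-trivial case $\tau^\varepsilon_S>\sigma$) implies $\Delta C_\sigma=0$, hence $C_{\rho-}-C_{\sigma-}=0$. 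The case $\tau^\varepsilon_S=\sigma$ is trivial because the interval collapses.

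Plugging $A_\rho-A_\sigma=0$ and $C_{\rho-}-C_{\sigma-}=0$ into the RBSDE equation between $\sigma$ and $\rho$ yields
\[
Y_\sigma \;=\; Y_\rho + \int_\sigma^\rho f(s,Y_s,Z_s,k_s)\,ds - \int_\sigma^\rho Z_s\,dW_s - \int_\sigma^\rho\!\!\int_E k_s(e)\,\tilde N(ds,de) - (h_\rho - h_\sigma).
\]
Thus $(Y_t,Z_t,k_t,h_t-h_\sigma)_{t\in[\sigma,\rho]}$ solves the (unreflected) BSDE with driver $f$ and terminal value $Y_\rho$; by the uniqueness statement recalled in Definition \ref{BSDE}, this identifies $Y_\sigma$ with $\mathcal{E}^f_{\sigma,\rho}(Y_\rho)$. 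Since $\sigma\leq\rho$ in $[S,\tau^\varepsilon_S]$ were arbitrary, $Y$ is an $\mathcal{E}^f$-martingale on $[S,\tau^\varepsilon_S]$.

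The main obstacle I anticipate is the bookkeeping near the endpoints, specifically the possibility of a left jump of $A$ at $\sigma$ or a right jump of $C$ at $\rho$: only the combination $A_\rho-A_\sigma$ and $C_{\rho-}-C_{\sigma-}$ appears in the RBSDE equation, so one must be careful to invoke the Skorokhod conditions on the correct half-open intervals $(\sigma,\rho]$ and $[\sigma,\rho)$ rather than on the closed one. The passage from the open-ended pointwise inequality $Y_t>\xi_t+\varepsilon$ on $[S,\tau^\varepsilon_S)$ to the predictable inequality $Y_{t-}>\overline{\xi}_t$ on the right-closed interval $(S,\tau^\varepsilon_S]$ relies essentially on the existence of left limits for $Y$, which in turn comes from the structural decomposition of the RBSDE solution noted at the start.
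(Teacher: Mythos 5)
Your proof is correct and follows essentially the same route as the paper's: the Skorokhod minimality conditions force $A$ and $C_{\cdot-}$ to be flat on $[S,\tau^\varepsilon_S]$ because $Y>\xi+\varepsilon$ strictly before $\tau^\varepsilon_S$ (hence $Y_{t-}\geq\overline{\xi}_t+\varepsilon$ up to and including $\tau^\varepsilon_S$), so the RBSDE degenerates to a BSDE on that interval and uniqueness identifies $Y_\sigma$ with $\mathcal{E}^f_{\sigma,\rho}(Y_\rho)$. Your explicit passage to left limits to handle the endpoint $\tau^\varepsilon_S$ (for $A^c$, $A^d$ and $C_{\cdot-}$) is exactly the step the paper carries out via continuity/left-continuity of the trajectories, so there is no substantive difference.
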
 

\begin{proof} The proof in our case of a general filtration is identical to that of Lemma 4.1 (statement $(ii)$) in \cite{MG}  and is given here  for the convenience of the reader\footnote{We note that the proof of Lemma 4.1 (statement $(ii)$) in \cite{MG}  does not require the assumption of r.u.s.c. of $\xi$.}. By definition of $\tau^{\varepsilon}_S$, we have: for a.e. $\omega\in\Omega $, for all $t\in[S(\omega), \tau^{\varepsilon}_S(\omega)[$, $Y_t(\omega)> \xi_t(\omega)  + \varepsilon.$  
Hence, by the Skorokhod condition for $A$, we have that  for a.e. $\omega\in\Omega $, the function $t\mapsto A^c_t(\omega) $ is constant on $[S(\omega), 
\tau^{\varepsilon}_S(\omega)[$; by continuity of almost every trajectory of the process $A^c$, $A^c_\cdot(\omega)$ is constant on the closed interval $[S(\omega), 
\tau^{\varepsilon}_S(\omega)]$, for a.e. $\omega$. Furthermore, (again by the Skotokhod condition for $A$), for a.e. $\omega\in\Omega $, the function  $t\mapsto A^d_t(\omega)$ is constant on $[S(\omega), \tau^{\varepsilon}_S(\omega)[.$
Moreover, $ Y_{ (\tau^{\varepsilon}_S)^-  } \geq \xi_{ (\tau^{\varepsilon}_S)^-  }+ \varepsilon\,$ a.s.\,, which implies that $\Delta A^d _ {\tau^{\varepsilon}_S  } =0$ a.s.\, Finally, for a.e. $\omega\in\Omega $, for all $t\in[S(\omega), \tau^{\varepsilon}_S(\omega)[$, $\Delta C_t(\omega)=C_t(\omega)-C_{t-}(\omega)=0$; 
therefore, for a.e. $\omega\in\Omega $, for all $t\in[S(\omega), \tau^{\varepsilon}_S(\omega)[$, $\Delta_+ C_{t-}(\omega)=C_t(\omega)-C_{t-}(\omega)=0$, which implies that, for a.e. $\omega\in\Omega$, the function $t\mapsto C_{t-}(\omega)$ is constant on  $[S(\omega), \tau^{\varepsilon}_S(\omega)[.$ By left-continuity of almost every trajectory of the process  $(C_{t-})$, we get that for a.e. $\omega\in\Omega$, the function $t\mapsto C_{t-}(\omega)$ is constant on the closed interval $[S(\omega), \tau^{\varepsilon}_S(\omega)]$. Thus, for a.e. $\omega\in\Omega$,  the map $t \mapsto A_t (\omega) + C_{t-} (\omega)$ is constant on $[S(\omega), \tau^{\varepsilon}_S(\omega)]$. Hence, $Y$ is the solution on $[S, 
\tau^{\varepsilon}_S]$ of the BSDE associated with driver $f$, terminal time $\tau^{\varepsilon}_S$ and terminal condition $Y_{\tau^{\varepsilon}_S}.$ The result follows.
 \end{proof}
 
\begin{Remark}\label{Rmk_positif}
Note that in the case where $\xi$ is nonnegative, the above result holds true also on the stochastic interval $[S,\tau_S^\lambda]$, where $\lambda\in(0,1)$  and 
$\tau_S^\lambda:=\inf\{t\geq S: \lambda Y_t\leq \xi_t\}$. Note that in the case of non-negative obstacle, we have also $Y\geq 0$ (as $Y\geq \xi\geq 0$);  hence, $\lambda Y_T\leq Y_T=\xi_T$ a.s. and $ \tau_S^\lambda$ is finite a.s.  

\end{Remark}

\section{Optimal stopping with non-linear $f$-expectation: formulation of the problem}\label{sec5}

Let $(\xi_t)_{t\in[0,T]}$ be a  process in $\mathcal{S}^2$. Let $f$ be a Lipschitz driver.  
For each $S\in$  $\stopo$, we  define \emph{the value  at time} $S$ by 
\begin{equation} \label{vvv}
V(S): =   {\rm ess} \sup_{\tau \in \T_{S,T}}{\cal E}^f_{S, \tau}(\xi_{\tau}).
\end{equation}

 
We make the following assumption on the driver (cf., e.g.,  Theorem 4.2 in \cite{QuenSul}). 
 
 \begin{Assumption}\label{Royer}
Assume that  $dP \otimes dt$-a.e.\, for each $(y,z, \mathpzc{k}_1,\mathpzc{k}_2)$ $\in$ $ \RB^2 \times (L^2_{\nu})^2$,
$$f( t,y,z, \mathpzc{k}_1)- f(t,y,z, \mathpzc{k}_2) \geq \langle \theta_t^{y,z, \mathpzc{k}_1,\mathpzc{k}_2}  \,,\,\mathpzc{k}_1 - \mathpzc{k}_2 \rangle_\nu,$$ 
where 
$
\theta:  [0,T]  \times \Omega\times \RB^2 \times  (L^2_{\nu})^2  \rightarrow  L^2_{\nu}\,; \, (\omega, t, y,z, \mathpzc{k}_1,\mathpzc{k}_2)\mapsto 
\theta_t^{y,z, \mathpzc{k}_1,\mathpzc{k}_2}(\omega,\cdot)
$ is a
 ${\cal P } ·\otimes {\cal B}(\R^2) \otimes  {\cal B}( (L^2_{\nu})^2 )$-measurable mapping,
satisfying
$\|\theta_t^{y,z, \mathpzc{k}_1,\mathpzc{k}_2}(\cdot)\|_{\nu}  \leq C\,$  for all $(y,z, \mathpzc{k}_1,\mathpzc{k}_2)$ $\in$ $\RB^2 \times (L^2_{\nu})^2$, $ \,dP\otimes dt $-a.e.\,, where $C$ is a positive constant, and such that  
 $
\theta_t^{y,z, \mathpzc{k}_1,\mathpzc{k}_2} (e)\geq -1,
$
for all $(y,z, \mathpzc{k}_1,\mathpzc{k}_2)$ $\in$ $\RB^2 \times (L^2_{\nu})^2$, $\,dP\otimes dt \otimes d\nu(e)-{\rm a.e.}$
\end{Assumption} 

The above assumption is satisfied if, for example, $f$ is of class ${\cal C}^1$ with respect to $\mathpzc{k}$ such that $\nabla_\mathpzc{k} f$ is bounded (in $L^2_{\nu}$) and $\nabla_\mathpzc{k} f \geq -1$ (cf. 
Proposition A.2. in \cite{DQS2}).

We recall that under Assumption \ref{Royer} on the driver $f$, the functional ${\cal E}^f_{S, \tau}(\cdot)$ is nondecreasing (cf. \cite[Thm. 4.2]{QuenSul} and Remark \ref{EDSRclassique}).\\
As mentioned in the introduction, the above optimal stopping problem has been largely studied: in \cite{EQ96}, and in \cite{Bayraktar-2}, in the case of a continuous pay-off process $\xi$;   in \cite{QuenSul2} and \cite{Bayraktar}   in the case of a right-continuous pay-off; and recently in \cite{MG} in the case of a  right-uppersemicontinuous pay-off process $\xi$. In this section, we do not make any regularity assumptions on $\xi$ (cf. also Remark \ref{rRmk_left_uppersemicontinuous envelope}).\\
 If we interpret $\xi$ as a financial position process and $- {\cal E}^f(\cdot)$ as a dynamic risk measure (cf.,e.g.,  \cite{Pe04}, \cite{Gianin}), then (up to a minus sign) $V(S)$ can be seen  as the minimal risk at time $S$. 
As also mentioned in  the introduction, the absence of regularity  allows for more flexibility in the modelling. If, for instance, we consider a situation where the jump times of the Poisson random measure model times of default (which, being totally inaccessible, cannot be foreseen),  then, the complete lack of regularity allows to  take into account  an immediate non-smooth, positive or negative,  impact on $\xi$ after the default occurs.\\
If we interpret $\xi$ as a payoff  process, and ${\cal E}^f(\cdot)$ as a non linear pricing rule, then the optimal stopping problem \eqref{vvv} is related to the (non linear) pricing problem of the American option with payoff $\xi$. The absence of regularity allows us to deal with the case of American options with irregular payoffs, such as American digital options (cf. Section \ref{amerique} for details). On the other hand, the fact that  the filtration is not necessarily the natural filtration associated with $W$ and $N$   allows to incorporate some additional information in the  modelling (such as, for example, default risks or other economic factors).\\
We  begin by addressing the simpler case where the payoff is assumed to be \emph{right u.s.c.}\, This preliminary study of the right u.s.c. case will allow us to establish an $\mathcal{E}^f$-Mertens decomposition for strong $\mathcal{E}^f$-supermartingales with respect to a general filtration (extending the existing results from the literature; cf. \cite{Bouchard} and \cite{MG}). This  will be an important result for the treatment of the non-linear optimal stopping problem in the case of a  \emph{completely irregular} pay-off.

\section{Optimal stopping with non-linear $f$-expectation: the right u.s.c. case}\label{rusc}

Let $f$ be a Lipschitz driver satisfying Assumption \ref{Royer}. The following result relies crucially on an assumption of right-uppersemicontinuity of $\xi$. 
\begin{Lemma}\label{lemma_epsilon_optimality}
Let $\xi$ be a 
 process in $\mathcal{S}^2$, supposed to be \emph{right u.s.c.} Let 
$(Y,Z,k,h,A,C)$ be the solution to the reflected BSDE with parameters $(f,\xi)$ as in Definition \ref{def_solution_RBSDE}. Let $S \in  {\cal T}_{0,T}$ and let $\varepsilon>0$. Let $\tau^{\varepsilon}_S$ be the stopping time defined by 
 \eqref{definitiontau}, that is, 
 $\tau^{\varepsilon}_S:= \inf\{t\geq S \,,  { Y}_t\leq \xi_t+\varepsilon \}$. We have
\begin{equation}\label{lambdabis}
  { Y}_{ \tau_S^{\varepsilon} }\leq \, \xi_{  \tau_S^{\varepsilon}}+\varepsilon \quad   \rm{a.s.}
  \end{equation}
 \end{Lemma}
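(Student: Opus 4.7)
The plan is to argue by contradiction. Write $\tau$ for $\tau_S^\varepsilon$, and suppose, on a set of positive probability, that $Y_\tau > \xi_\tau + \varepsilon$. Since $Y_T = \xi_T$, the set $\{t\geq S : Y_t\leq \xi_t + \varepsilon\}$ contains $T$, so $\tau \leq T$; and on the bad event $\{Y_\tau > \xi_\tau + \varepsilon\}$ the infimum defining $\tau$ cannot be attained at $\tau$ itself, forcing $\tau < T$ and the existence of times $t_n \downarrow \tau$ with $t_n > \tau$ and $Y_{t_n} \leq \xi_{t_n} + \varepsilon$.

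The key structural tool is the identity $Y_\tau = Y_{\tau+}\vee \xi_\tau$ from Remark \ref{Rmk_the_jumps_of_C}, valid for every stopping time $\tau \in \mathcal{T}_{0,T}$ thanks to the Skorokhod condition on $C$. This identity reduces the whole argument to proving that, on the bad event, $Y_{\tau+}\leq \xi_\tau + \varepsilon$; indeed, once this is established one has $Y_\tau = Y_{\tau+}\vee\xi_\tau \leq (\xi_\tau + \varepsilon)\vee \xi_\tau = \xi_\tau + \varepsilon$, contradicting $Y_\tau > \xi_\tau + \varepsilon$.

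To obtain $Y_{\tau+}\leq \xi_\tau + \varepsilon$, I pass to the limit along $(t_n)$: the ladlag property of $Y$ (which is guaranteed since $(Y,Z,k,h,A,C)$ is a solution of the RBSDE and so in $\mathcal{S}^2$ with well-defined right limits) yields $Y_{\tau+} = \lim_n Y_{t_n}$, while the right upper-semicontinuity of $\xi$ at $\tau$ yields $\limsup_n \xi_{t_n} \leq \xi_\tau$. Passing to the limit in $Y_{t_n}\leq \xi_{t_n}+\varepsilon$ then gives $Y_{\tau+}\leq \xi_\tau + \varepsilon$, closing the contradiction.

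The main obstacle is the measurability of the approximating sequence: the $t_n$ constructed pointwise depend on $\omega$ and are not stopping times. The standard fix is to replace them by a sequence of stopping times $\tau_n \downarrow \tau$ obtained via the début theorem in the right-continuous filtration $\mathbb{F}$, for example by considering successive débuts of the optional set $\{Y \leq \xi + \varepsilon\}$ after the times $\tau + 1/n$, and then applying the r.u.s.c.\ property of $\xi$ in the form of Definition \ref{defr} (r.u.s.c.\ along stopping times) to obtain $\limsup_n \xi_{\tau_n}\leq \xi_\tau$ a.s.\ on the bad event. All other steps are direct consequences of the RBSDE structure and the definition of $\tau_S^{\varepsilon}$.
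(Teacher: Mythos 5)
Your proof is correct and follows essentially the same route as the paper: argue by contradiction, use the Skorokhod condition on $C$ (equivalently the identity $Y_\tau=Y_{\tau+}\vee\xi_\tau$ of Remark \ref{Rmk_the_jumps_of_C}) to reduce to showing $Y_{\tau+}\leq\xi_\tau+\varepsilon$, and obtain that bound by passing to the limit along a sequence $t_n\downarrow\tau$ realizing the infimum, using the right upper-semicontinuity of $\xi$ and the existence of right limits of $Y$.

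One remark on your last paragraph: the measurability of the $t_n$ is not actually an obstacle, and the paper does not address it. The limit step is carried out $\omega$ by $\omega$ and its conclusion, $Y_{\tau+}(\omega)\leq\xi_\tau(\omega)+\varepsilon$, is an inequality between the two random variables $Y_{\tau+}$ and $\xi_\tau$, whose exceptional set is automatically measurable; so the pathwise argument already yields the a.s.\ statement and no stopping-time approximation is needed (the hypothesis used is pathwise right upper-semicontinuity of $\xi$, not merely r.u.s.c.\ along stopping times). Moreover, the d\'ebut-based fix you sketch would not go through as stated: the d\'ebut of the optional set $\{Y\leq\xi+\varepsilon\}$ after $\tau+1/n$ need not belong to that set, precisely because this set is in general not right-closed --- which is the very phenomenon the lemma is about --- so you could not assert $Y_{\tau_n}\leq\xi_{\tau_n}+\varepsilon$ at those stopping times. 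Since that detour is unnecessary, this does not affect the validity of your proof.
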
 
\begin{proof}
%
 The proof of this result in our case of a general filtration is  identical  to that from  \cite[Lemma 4.1(i)]{MG} in the case of a Brownian-Poisson filtration.  We give  again the arguments here in order to emphasize the important role of the right-uppersemicontinuity assumption on $\xi$.   By way of contradiction, we suppose $P( Y_{\tau^{\varepsilon}_S}> \xi_{\tau^{\varepsilon}_S}+\varepsilon)>0$. By the Skorokhod condition for $C$, we have  $\Delta C_{\tau^{\varepsilon}_S}=C_{\tau^{\varepsilon}_S}- C_{(\tau^{\varepsilon}_S)-}=0$ on the set $\{ Y_{\tau^{\varepsilon}_S}> \xi_{\tau^{\varepsilon}_S}+\varepsilon\}$. On the other hand, due to Remark \ref{Rmk_the_jumps_of_C}, $\Delta C_{\tau^{\varepsilon}_S}=Y_{\tau^{\varepsilon}_S}-Y_{(\tau^{\varepsilon}_S)+}.$ Thus, $Y_{\tau^{\varepsilon}_S}=Y_{(\tau^{\varepsilon}_S)+}$ on the set $\{ Y_{\tau^{\varepsilon}_S}> \xi_{\tau^{\varepsilon}_S}+\varepsilon \}.$ Hence,
\begin{equation}\label{eq_contradiction}
\lambda Y_{(\tau^{\varepsilon}_S)+}> \xi_{\tau^{\varepsilon}_S}  \text{ on the set } \{Y_{\tau^{\varepsilon}_S}> \xi_{\tau^{\varepsilon}_S}+\varepsilon\}.
\end{equation}
 We will obtain a contradiction with this statement.
Let us fix $\omega\in\Omega$. By definition of $\tau^{\varepsilon}_S(\omega)$, there exists a non-increasing sequence $(t_n)=(t_n(\omega))\downarrow \tau^{\varepsilon}_S(\omega) $ such that $ Y_{t_n}(\omega)\leq \xi_{t_n}(\omega)+\varepsilon$, for all $n\in\N$. Hence,
$ \limsupn Y_{t_n}(\omega)\leq \limsupn \xi_{t_n}(\omega)+\varepsilon.$ As the process $\xi$ is \emph{right-uppersemicontinuous}\,, we have $\limsupn \xi_{t_n}(\omega)\leq \xi_{\tau^{\varepsilon}_S}(\omega)$. On the other hand, as $(t_n(\omega))\downarrow \tau^{\varepsilon}_S(\omega)$, we have $\limsupn Y_{t_n}(\omega)=Y_{(\tau^{\varepsilon}_S)+}(\omega).$ Thus, $ Y_{(\tau^{\varepsilon}_S)+}(\omega)\leq \xi_{\tau^{\varepsilon}_S}(\omega)+\varepsilon,$ which is in contradiction with \eqref{eq_contradiction}. We conclude that $ Y_{\tau^{\varepsilon}_S}\leq  \xi_{\tau^{\varepsilon}_S}+\varepsilon \text{ a.s.}$
\end{proof}
%
%
 With the help of the previous lemma together with Lemma \ref{tol}, we derive the following result.  
\begin{Theorem}[Characterization theorem in the  r.u.s.c.  case]\label{caracterisation}
 Let $(\xi_t)_{t\in[0,T]}$ be a 
process in ${\cal S}^2$, supposed to be \emph{right u.s.c.}
Let 
$(Y,Z,k,h,A,C)$ be the solution to the reflected BSDE with parameters $(f,\xi)$ as in Definition \ref{def_solution_RBSDE}.
\begin{description}
\item[$\bullet$] 
For each stopping time $S$ $\in$ $\T_0$, we have \footnote{ In other words, the process $(Y_t)$ aggregates the value family $(V(S), S \in \T_0)$ defined by \eqref{vvv}, that is $Y_S= V(S)$ a.s. for all $S$ $\in$ $\stopo$.}
 \begin{equation}\label{prixam}
Y_S = {\rm ess} \sup_{\tau \in \T_{S,T}}{\cal E}^f_{S, \tau}(\xi_{\tau})\,\, \quad   \rm{a.s.}
\end{equation}
\item[$\bullet$]  Moreover, the  stopping time 
$\tau_S^{\varepsilon}$  defined by \eqref{definitiontau},  that is, $\tau^{\varepsilon}_S  = \inf \{ t \geq S,\,\, Y_t \leq \xi_t + \varepsilon\}$,
  
  satisfies
 \begin{equation}\label{novelbis}
 Y_S \,\,\leq \,\,  {\cal E}^f_{S, \tau^{\varepsilon}_S}(\xi_{\tau^{\varepsilon}_S})
 + L 
 \varepsilon \quad   \rm{a.s.}\,,
 \end{equation}
where $L$ is a constant which only depends on $T$ and the Lipschitz constant  $K$ of $f$.

  In other words, $\tau_S^{\varepsilon}$ is an $L\varepsilon$-optimal stopping time 
  for  problem \eqref{prixam}.

\end{description}
\end{Theorem}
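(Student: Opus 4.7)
My plan is to prove the two bullets simultaneously by sandwiching $Y_S$ between $\esssup_{\tau}\mathcal{E}^f_{S,\tau}(\xi_\tau)$ and $\mathcal{E}^f_{S,\tau_S^{\varepsilon}}(\xi_{\tau_S^{\varepsilon}})+L\varepsilon$. The two main technical inputs are Lemma \ref{tol} (the $\mathcal{E}^f$-martingale property of $Y$ on $[S,\tau_S^{\varepsilon}]$) and Lemma \ref{lemma_epsilon_optimality} (which uses right-u.s.c.\ of $\xi$), together with the monotonicity and Lipschitz stability of $\mathcal{E}^f$ granted by Assumption \ref{Royer}.

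\emph{Step 1: the supermartingale inequality.} First I would show that $Y$ is a strong $\mathcal{E}^f$-supermartingale, from which the inequality $Y_S\geq \esssup_{\tau\in\mathcal{T}_{S,T}}\mathcal{E}^f_{S,\tau}(\xi_\tau)$ will follow. Fix $\tau\in\mathcal{T}_{S,T}$ and let $(\tilde Y,\tilde Z,\tilde k,\tilde h)$ denote the solution of the BSDE with driver $f$, terminal time $\tau$ and terminal condition $Y_\tau$, so that $\tilde Y_S=\mathcal{E}^f_{S,\tau}(Y_\tau)$. Since $Y$ itself satisfies on $[S,\tau]$ the same BSDE up to the additional non-decreasing terms $dA_t+dC_{t-}$, the comparison theorem for BSDEs (valid under Assumption \ref{Royer}) yields $Y_S\geq \tilde Y_S=\mathcal{E}^f_{S,\tau}(Y_\tau)$ a.s. Using $Y_\tau\geq\xi_\tau$ a.s.\ together with the monotonicity of $\mathcal{E}^f_{S,\tau}$, I obtain $Y_S\geq \mathcal{E}^f_{S,\tau}(\xi_\tau)$; taking the essential supremum over $\tau\in\mathcal{T}_{S,T}$ gives one inequality.

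\emph{Step 2: the $\varepsilon$-optimality.} For the reverse inequality, I apply Lemma \ref{tol} at the stopping time $\tau_S^{\varepsilon}$, which gives the exact identity
\begin{equation*}
Y_S \;=\; \mathcal{E}^f_{S,\tau_S^{\varepsilon}}(Y_{\tau_S^{\varepsilon}})\quad\text{a.s.}
\end{equation*}
By Lemma \ref{lemma_epsilon_optimality} (where the right-u.s.c.\ assumption is essential), $Y_{\tau_S^{\varepsilon}}\leq \xi_{\tau_S^{\varepsilon}}+\varepsilon$ a.s. Monotonicity of $\mathcal{E}^f$ combined with the standard Lipschitz stability estimate for BSDEs with respect to their terminal condition (which yields a constant $L=L(T,K)$ such that $\mathcal{E}^f_{S,\tau}(\eta+\varepsilon)\leq \mathcal{E}^f_{S,\tau}(\eta)+L\varepsilon$ a.s.) then give
\begin{equation*}
Y_S \;\leq\; \mathcal{E}^f_{S,\tau_S^{\varepsilon}}(\xi_{\tau_S^{\varepsilon}}+\varepsilon)\;\leq\; \mathcal{E}^f_{S,\tau_S^{\varepsilon}}(\xi_{\tau_S^{\varepsilon}})+L\varepsilon.
\end{equation*}
This is exactly \eqref{novelbis}. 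Combining the two steps, $Y_S\geq \esssup_{\tau}\mathcal{E}^f_{S,\tau}(\xi_\tau)\geq \mathcal{E}^f_{S,\tau_S^{\varepsilon}}(\xi_{\tau_S^{\varepsilon}})\geq Y_S-L\varepsilon$; letting $\varepsilon\downarrow 0$ gives \eqref{prixam}, and the same chain identifies $\tau_S^{\varepsilon}$ as an $L\varepsilon$-optimal stopping time.

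\emph{Main obstacle.} The subtle point is the strong $\mathcal{E}^f$-supermartingale property of $Y$ in Step 1: here $Y$ is only ladlag (not cadlag), $A$ contributes through both $A^c$ and $A^d$, and $C$ is right-continuous purely discontinuous, so writing $Y$ as ``BSDE solution plus non-decreasing perturbation'' on a general stochastic interval $[S,\tau]$ must be done carefully, and the comparison argument must be applied in the ladlag setting. Everything else (the Lipschitz stability constant $L$, the essential supremum manipulations) is routine once this is in place.
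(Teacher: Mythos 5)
Your proof is correct and follows essentially the same route as the paper: the upper bound comes from Lemma \ref{tol}, Lemma \ref{lemma_epsilon_optimality}, monotonicity of $\mathcal{E}^f$ and the a priori BSDE estimates giving the constant $L=L(T,K)$, and the lower bound from the strong $\mathcal{E}^f$-supermartingale property of $Y$ together with $Y\geq\xi$. The only difference is that your Step 1 re-derives that supermartingale property via a ladlag comparison argument, whereas the paper simply invokes Lemma \ref{compref} from the Appendix, whose (omitted) proof is precisely the careful treatment of the $dA_t+dC_{t-}$ terms that you flag as the main obstacle.
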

\begin{proof} The arguments are classical.
Let us show the inequality \eqref{novelbis}. 
Since by Lemma \ref{tol}, the process $(Y_t)$ is 
 an ${\cal E}^{f}$-martingale on $[S, \tau_S^{\varepsilon}]$, we get 
 $Y_S= {\cal E}^f_{S, \tau_S^{\varepsilon}}(Y_{\tau_S^{\varepsilon}})$ a.s.\,
Since $\xi$ is right u.s.c.\,, we can apply Lemma \ref{lemma_epsilon_optimality}.  Using this, the  monotonicity property of the conditional $f$-expectation and  the {\em a priori estimates} for BSDEs (cf. \cite{QuenSul} which still hold in our case of a general filtration),
we derive that 
\begin{equation*} \label{prem}
  Y_S =  {\cal E}^f_{S, \tau^{\varepsilon}_S}(Y_{\tau^{\varepsilon}_S})
    \leq  {\cal E}^f_{S, \tau^{\varepsilon}_S}(\xi_{\tau^{\varepsilon}_S}+ \varepsilon)  \leq 
     {\cal E}^f_{S, \tau^{\varepsilon}_S}(\xi_{\tau^{\varepsilon}_S})
 + L 
 \varepsilon \quad   \rm{a.s.},
\end{equation*}
%
%
where $L$ is a positive constant depending only on $T$ and the Lipschitz constant $K$ of the driver $f$; this gives the desired inequality \eqref{novelbis}.
 Moreover, as $\varepsilon$ is an arbitrary nonnegative number, we get
$
Y_S \,\, \leq  \,\,{\rm ess} \sup_{\tau \in \T_{S,T}}{\cal E}^f_{S ,\tau}(\xi_{\tau})$  
a.s.\,\, 

It remains to show the converse inequality.
Let $\tau$ $\in$ $\T_{S,T}$. 
By Lemma \ref{compref} 
in the Appendix,
 the process $(Y_t)$ is a strong ${\cal E}^f$-supermartingale.  
Hence, for each $\tau \in \T_{S,T}$, we have 
$Y_{S} \geq {\cal E}^f_{S ,\tau}(Y_{\tau})\geq {\cal E}^f_{S ,\tau}(\xi_{\tau})  \quad\text{ a.s.}\,, $ 
where the second inequality follows from the inequality $Y \geq \xi$ and the monotonicity property of 
${\cal E}^f(\cdot)$ (with respect to terminal condition).
By taking the supremum over $\tau \in \T_{S,T}$, we get
$
Y_S  \geq  {\rm ess} \sup_{\tau \in \T_{S,T}} {\cal E}^f_{S ,\tau}(\xi_{\tau})$ a.s.\,
We thus derive the desired equality \eqref{prixam}, which completes the proof.
\end{proof}

We  now investigate the question of the existence of optimal stopping times for the optimal stopping  problem \eqref{prixam}. We first provide an optimality criterion.

\begin{Lemma}[Optimality criterion] \label{optcri}
Let $(\xi_t, 0 
\leq t \leq T )$ be a 
 process  \footnote{Let us emphasize  that this optimality criterion holds true without an  assumption of right-upppersemicontinuity of the process $\xi$.} in ${\cal S}^2$ and let $f$ be a predictable Lipschitz driver satisfying Assumption \ref{Royer}. 
Let $S \in \stopo$ and  $\tau^* \in  \T_{S,T}.$ 
If $Y$ is a strong ${\cal E}^f$-martingale on $[S, \tau^*]$ with $Y_{ \tau^*}= \xi_{\tau^*}$ a.s., then the stopping time $\tau^*$ is optimal at time $S$  (i.e. $Y_S =  {\cal E}^f_{S,  \tau^*}(\xi_{\tau^*})$ a.s.). The converse statement also holds true,  if, in addition, the  inequality from  Assumption \ref{Royer} is strict 
(that is,   $\theta_t^{y,z, \mathpzc{k}_1,\mathpzc{k}_2} > -1$).   
\end{Lemma}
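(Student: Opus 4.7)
The plan is to establish both directions by exploiting the known structural properties of $Y$ under the RBSDE setup, namely that $Y$ is a strong $\mathcal{E}^f$-supermartingale dominating $\xi$ (via Lemma \ref{compref} of the Appendix and Definition \ref{def_solution_RBSDE}, as already invoked in the proof of Theorem \ref{caracterisation}). For the sufficient direction, assume $Y$ is a strong $\mathcal{E}^f$-martingale on $[S,\tau^*]$ with $Y_{\tau^*} = \xi_{\tau^*}$ a.s.; then the defining equality of a strong $\mathcal{E}^f$-martingale applied to the endpoints $S \leq \tau^*$ gives
\begin{equation*}
Y_S \;=\; \mathcal{E}^f_{S,\tau^*}(Y_{\tau^*}) \;=\; \mathcal{E}^f_{S,\tau^*}(\xi_{\tau^*}) \quad \text{a.s.},
\end{equation*}
which is the claimed optimality of $\tau^*$ at time $S$. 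No strict monotonicity of $\mathcal{E}^f$ is required.

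For the converse, I assume $Y_S = \mathcal{E}^f_{S,\tau^*}(\xi_{\tau^*})$ a.s. and fix an arbitrary intermediate stopping time $\sigma \in \T_{S,\tau^*}$. The key step is to write the sandwich-type chain
\begin{equation*}
Y_S \;\geq\; \mathcal{E}^f_{S,\sigma}(Y_\sigma) \;\geq\; \mathcal{E}^f_{S,\sigma}\!\bigl(\mathcal{E}^f_{\sigma,\tau^*}(Y_{\tau^*})\bigr) \;=\; \mathcal{E}^f_{S,\tau^*}(Y_{\tau^*}) \;\geq\; \mathcal{E}^f_{S,\tau^*}(\xi_{\tau^*}) \;=\; Y_S,
\end{equation*}
where the first and second inequalities come from strong $\mathcal{E}^f$-supermartingality on $[S,\sigma]$ and on $[\sigma,\tau^*]$ respectively (the latter combined with monotonicity of $\mathcal{E}^f_{S,\sigma}$ granted by Assumption \ref{Royer}), the middle equality is the flow/time-consistency of $\mathcal{E}^f$ (chaining BSDEs on $[S,\sigma]$ and $[\sigma,\tau^*]$), the third inequality is monotonicity applied to $Y_{\tau^*} \geq \xi_{\tau^*}$, and the final equality is the optimality hypothesis. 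Hence every inequality must be an equality.

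The equality $\mathcal{E}^f_{S,\tau^*}(Y_{\tau^*}) = \mathcal{E}^f_{S,\tau^*}(\xi_{\tau^*})$ together with $Y_{\tau^*} \geq \xi_{\tau^*}$ and the \emph{strict} comparison theorem for BSDEs with jumps (valid under the strengthened Assumption \ref{Royer} with $\theta > -1$) forces $Y_{\tau^*} = \xi_{\tau^*}$ a.s. Similarly, the equality $\mathcal{E}^f_{S,\sigma}(Y_\sigma) = \mathcal{E}^f_{S,\sigma}\bigl(\mathcal{E}^f_{\sigma,\tau^*}(Y_{\tau^*})\bigr)$ combined with $Y_\sigma \geq \mathcal{E}^f_{\sigma,\tau^*}(Y_{\tau^*})$ and the same strict comparison yields $Y_\sigma = \mathcal{E}^f_{\sigma,\tau^*}(Y_{\tau^*})$ a.s. Since this identity holds for every $\sigma \in \T_{S,\tau^*}$, the tower property of $\mathcal{E}^f$ upgrades it to $Y_\sigma = \mathcal{E}^f_{\sigma,\sigma'}(Y_{\sigma'})$ for every pair $\sigma \leq \sigma'$ in $\T_{S,\tau^*}$, i.e. to the strong $\mathcal{E}^f$-martingale property on $[S,\tau^*]$.

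The main obstacle is clearly the converse direction, and specifically pinpointing where strictness enters: passing from equality of two $\mathcal{E}^f$-expectations to pathwise equality of the underlying random variables. In the jump setting this is precisely the point at which the strengthened hypothesis $\theta > -1$ is indispensable — without strict positivity of $1+\theta$, the Girsanov-type density entering the strict comparison argument can degenerate, so a positive-measure gap between $Y_{\tau^*}$ and $\xi_{\tau^*}$ (or between $Y_\sigma$ and $\mathcal{E}^f_{\sigma,\tau^*}(Y_{\tau^*})$) could persist, and the converse implication of the criterion would fail.
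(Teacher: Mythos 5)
Your proposal is correct and follows essentially the same route as the paper's proof (which is deferred to Proposition 4.1 of \cite{MG}): the direct implication by the $\mathcal{E}^f$-martingale property applied at the endpoints, and the converse by the sandwich chain of supermartingale/monotonicity/consistency inequalities, all forced to be equalities, followed by the strict comparison theorem for BSDEs (valid under $\theta>-1$) to upgrade equality of the $\mathcal{E}^f$-evaluations to a.s. equality of $Y_{\tau^*}$ with $\xi_{\tau^*}$ and of $Y_\sigma$ with $\mathcal{E}^f_{\sigma,\tau^*}(Y_{\tau^*})$. You also correctly locate the single point where strictness of Assumption \ref{Royer} is indispensable, so nothing is missing.
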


\begin{proof}
The proof of this result in the case of a Brownian-Poisson filtration can be found in \cite[Proposition 4.1 ]{MG}. The proof in our case of a general filtration is identical and is therefore  omitted. 
\end{proof}
We now show that if $\xi$ is assumed to be r.u.s.c. and also l.u.s.c. along stopping times, then there exists an optimal stopping time. 

Let $S\in\T_0$. Let us  recall the definition of $\tau_S^\varepsilon$ from before: 
 \begin{equation*}
 \tau^{\varepsilon}_S:= \inf\{t\geq S \,,  { Y}_t\leq \xi_t+\varepsilon \}.
 \end{equation*} 
 We notice that $\tau^{\varepsilon}_S$ is non-increasing in $\varepsilon$. Let $(\varepsilon_n)$ be a non-increasing positive sequence converging to $0$.  We set 
 $$\hat\tau_S:=\lim_{n\to\infty}\uparrow \tau_S^{\varepsilon_n}.$$
 The random time $\hat\tau_S$ is a stopping time in $\T_S$.\\
  We also set $$\tau_S^0:=\inf\{t\geq S \,,  { Y}_t= \xi_t \}.$$
  We notice that $ \tau_S^{\varepsilon_n}\leq \tau_S^0$ a.s.  for all $n$. Hence, by passing to the limit, we get $\hat\tau_S\leq \tau_S^0$ a.s.

In the following theorem we show that, under the additional assumption  that $\xi$ is  \emph{l.u.s.c. along stopping times}, the stopping time $\hat\tau_S$ is an\emph{ optimal} stopping time at time $S$.  We also show that the stopping times $\hat \tau_S$ and $\tau^0_S$ coincide.
\begin{Theorem}[Existence of optimal stopping time]\label{Thm_existence_optimal}
Let $(\xi_t, 0 
\leq t \leq T )$ be an r.u.s.c.  process in ${\cal S}^2$ and let $f$ be a predictable Lipschitz driver satisfying Assumption \ref{Royer}. We assume, in addition, that $(\xi_t)$ is \emph{l.u.s.c. along stopping times.} Then,  
the  stopping time $\hat\tau_S$ is $S$-{\em optimal}, in the sense that it attains the supremum in \eqref{prixam}. Moreover,  $\hat\tau_S= \tau_S^0$ a.s.

\end{Theorem}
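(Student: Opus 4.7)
The plan is to invoke the optimality criterion of Lemma \ref{optcri} at $\tau^{*}=\hat\tau_S$: it suffices to show that $Y$ is a strong $\mathcal{E}^f$-martingale on $[S,\hat\tau_S]$ and that $Y_{\hat\tau_S}=\xi_{\hat\tau_S}$ a.s. Both assertions will be obtained by a limit passage in Lemmas \ref{tol} and \ref{lemma_epsilon_optimality} as $n\to\infty$, after splitting $\Omega$ into the event $B:=\bigcap_n\{\tau_S^{\varepsilon_n}<\hat\tau_S\}$ (on which $(\tau_S^{\varepsilon_n})$ strictly announces the predictable time $\hat\tau_S$, so $B\in\mathcal{F}_{\hat\tau_S-}$) and its complement (on which $\tau_S^{\varepsilon_n}=\hat\tau_S$ for $n$ large). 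Once both facts are established, Lemma \ref{optcri} gives $S$-optimality of $\hat\tau_S$; the identity $\hat\tau_S=\tau_S^0$ then follows from the already-noted $\hat\tau_S\le\tau_S^0$ together with the fact that $Y_{\hat\tau_S}=\xi_{\hat\tau_S}$ forces $\tau_S^0\le\hat\tau_S$ by definition of $\tau_S^0$.

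Off $B$ both points are immediate: taking $n\to\infty$ in Lemma \ref{lemma_epsilon_optimality} gives $Y_{\hat\tau_S}\le\xi_{\hat\tau_S}$, hence $Y_{\hat\tau_S}=\xi_{\hat\tau_S}$ by $Y\ge\xi$, and Lemma \ref{tol} delivers the $\mathcal{E}^f$-martingale property on $[S,\hat\tau_S]$. On $B$ the analysis is more delicate. The ladlag property of $Y$ and the squeeze $0\le Y_{\tau_S^{\varepsilon_n}}-\xi_{\tau_S^{\varepsilon_n}}\le\varepsilon_n$ yield $Y_{\tau_S^{\varepsilon_n}}\to Y_{\hat\tau_S-}$ and $\xi_{\tau_S^{\varepsilon_n}}\to Y_{\hat\tau_S-}$; the l.u.s.c.\ along stopping times of $\xi$ then forces $Y_{\hat\tau_S-}\le\xi_{\hat\tau_S}$. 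Lemma \ref{tol} (more precisely, its proof) ensures that $A$ and $C_-$ are constant on each $[S,\tau_S^{\varepsilon_n}]$, hence on $[S,\hat\tau_S)$; what remains is to show that the predictable jump $\Delta A_{\hat\tau_S}$ vanishes and that $Y_{\hat\tau_S}\le\xi_{\hat\tau_S}$ a.s.\ on $B$.

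Both remaining points follow from a jump analysis at the predictable time $\hat\tau_S$. The RBSDE at $\hat\tau_S$ reduces to $\Delta Y_{\hat\tau_S}=\Delta h_{\hat\tau_S}-\Delta A_{\hat\tau_S}$, since $W$ is continuous, the compensated Poisson integral has no predictable jump at $\hat\tau_S$, and $t\mapsto C_{t-}$ is left-continuous. On $B$ we have $\Delta Y_{\hat\tau_S}\ge 0$ (from $Y_{\hat\tau_S}\ge\xi_{\hat\tau_S}\ge Y_{\hat\tau_S-}$), $\Delta A_{\hat\tau_S}\ge 0$ is $\mathcal{F}_{\hat\tau_S-}$-measurable, and $E[\Delta h_{\hat\tau_S}\mid\mathcal{F}_{\hat\tau_S-}]=0$ by the martingale property of $h$. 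Since $B\in\mathcal{F}_{\hat\tau_S-}$, monotonicity of conditional expectation gives $\mathbf{1}_B\Delta A_{\hat\tau_S}\le E[\Delta h_{\hat\tau_S}\mathbf{1}_B\mid\mathcal{F}_{\hat\tau_S-}]=0$, so $\Delta A_{\hat\tau_S}\mathbf{1}_B=0$ a.s.; then $\Delta h_{\hat\tau_S}\mathbf{1}_B=\Delta Y_{\hat\tau_S}\mathbf{1}_B\ge 0$ has conditional mean zero and hence vanishes a.s. too. Therefore $Y_{\hat\tau_S}=Y_{\hat\tau_S-}\le\xi_{\hat\tau_S}$ on $B$, which combined with $Y\ge\xi$ gives $Y_{\hat\tau_S}=\xi_{\hat\tau_S}$ a.s.\ globally, and simultaneously $\Delta A_{\hat\tau_S}=0$ a.s., delivering the $\mathcal{E}^f$-martingale property of $Y$ on $[S,\hat\tau_S]$.

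The main obstacle is precisely this predictable-jump analysis on the announcement event $B$: the identity $\Delta Y_{\hat\tau_S}=\Delta h_{\hat\tau_S}-\Delta A_{\hat\tau_S}$, combined with the nonnegativity of $\Delta A_{\hat\tau_S}$ and the conditional-mean-zero property $E[\Delta h_{\hat\tau_S}\mid\mathcal{F}_{\hat\tau_S-}]=0$ at the predictable time announced by $(\tau_S^{\varepsilon_n})$, is the key new ingredient compared with the Brownian-Poisson case, where the orthogonal martingale part $h$ vanishes and one immediately has $\Delta Y_{\hat\tau_S}=-\Delta A_{\hat\tau_S}\le 0$.
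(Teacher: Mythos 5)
Your proof is correct, but it takes a genuinely different route from the paper's. The paper argues softly: left-uppersemicontinuity along stopping times gives $\limsup_n\xi_{\tau_S^{\varepsilon_n}}\le\xi_{\hat\tau_S}$, a Fatou lemma for (non-reflected) BSDEs then gives $\limsup_n\mathcal{E}^f_{S,\tau_S^{\varepsilon_n}}(\xi_{\tau_S^{\varepsilon_n}})\le\mathcal{E}^f_{S,\hat\tau_S}(\xi_{\hat\tau_S})$, and combining this with the $L\varepsilon_n$-optimality inequality \eqref{novelbis} yields $Y_S\le\mathcal{E}^f_{S,\hat\tau_S}(\xi_{\hat\tau_S})$ directly; the identity $\hat\tau_S=\tau_S^0$ is then obtained by viewing $Y$ as the value of the auxiliary \emph{linear} problem \eqref{eq_123} and invoking the classical minimality of $\tau_S^0$ there. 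You instead verify the two hypotheses of the optimality criterion (Lemma \ref{optcri}) --- contact $Y_{\hat\tau_S}=\xi_{\hat\tau_S}$ and the $\mathcal{E}^f$-martingale property on the closed interval --- via a predictable-jump analysis on the announcement event $B$; the facts you need ($E[\Delta h_{\tau}\mid\mathcal{F}_{\tau-}]=0$ at a predictable $\tau$, $\mathcal{F}_{\tau-}$-measurability of $\Delta A_\tau$, no Poisson jump at a predictable time) are exactly those the paper uses in the Appendix proof of Lemma \ref{Lemma_estimate}, so your route is available in this framework. Two points deserve care in a full write-up: the conditioning must be done at the \emph{restricted} time $(\hat\tau_S)_B$ (predictable because its graph equals $\bigcap_n\,]\hspace{-1.5pt}]\tau_S^{\varepsilon_n},\hat\tau_S]\hspace{-1.5pt}]$), and the global $\mathcal{E}^f$-martingale property should be assembled from the \emph{pathwise} constancy of $A+C_{-}$ on $[S,\hat\tau_S]$ rather than asserted separately on $B$ and $B^c$, since $\mathcal{E}^f$ is nonlinear. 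Your approach is longer but buys strictly more: the contact property at $\hat\tau_S$, the vanishing of $\Delta A_{\hat\tau_S}$, the martingale property up to and including $\hat\tau_S$, and an immediate derivation of $\hat\tau_S=\tau_S^0$ from the contact property with no detour through the linear problem; the paper's route avoids all fine structure at predictable times at the cost of invoking a Fatou lemma for BSDEs.
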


\begin{proof}
As $(\xi_t)$ is l.u.s.c. along stopping times,  
 we have
\begin{equation}\label{eq_limsup}
\limsup_{n\to\infty}\xi_{\tau_S^{\varepsilon_n}}\leq \xi_{\hat\tau_S} \text{ a.s. }
\end{equation}
 By applying Fatou's lemma for (non-reflected) BSDEs (cf. Lemma A.5 in \cite{DQS5} \footnote{Note that Fatou's lemma for (non-reflected) BSDEs, shown in \cite{DQS5} in the case of a Brownian-Poisson filtration, still holds true  in our framework of a general filtration.}),  we obtain
\begin{equation}\label{eq_limsup2}
\limsup_{n\to\infty} \mathcal{E}^f_{S, \tau_S^{\varepsilon_n}} \big(\xi_{\tau_S^{\varepsilon_n}}\big)\leq \mathcal{E}^f_{S, \hat\tau_S} \big(\limsup_{n\to\infty} \xi_{\tau_S^{\varepsilon_n}}\big)\leq \mathcal{E}^f_{S, \hat\tau_S} \big( \xi_{\hat \tau_S}\big)  \text{ a.s., }
\end{equation}
where the last inequality follows from \eqref{eq_limsup} and from the monotonicity of $\mathcal{E}^f_{S, \hat\tau_S}(\cdot).$ On the other hand, from Eq. \eqref{novelbis} in Theorem \ref{caracterisation}, we have $Y_S\leq \limsup_{n\to\infty} \mathcal{E}^f_{S, \tau_S^{\varepsilon_n}} \big(\xi_{\tau_S^{\varepsilon_n}}\big)$ a.s. From this, together with  \eqref{eq_limsup2}, we get 
$Y_S\leq  \mathcal{E}^f_{S, \hat\tau_S} \big( \xi_{\hat \tau_S}\big)  \text{ a.s., }$ which shows that $\hat\tau_S$ is an optimal stopping time. \\
Let us now prove the equality $\hat\tau_S= \tau_S^1$ a.s. We have already noticed that $\hat\tau_S\leq \tau_S^1$ a.s.  It remains to show the converse inequality. 
 Note that for each $S \in\stopo$, $Y_S$ is equal a.s. to the value  at time $S$ of the linear optimal stopping problem associated with the pay-off process $(\xi_t)$ and the instantaneous  reward process $(\bar f_t)$ defined by $\bar f_t(\omega, t): = f(\omega, t, Y_{t-}(\omega),Z_t(\omega),k_t(\omega))$, that is \begin{eqnarray}\label{eq_123}
 Y_S= \esssup_{\tau \in \stops} E[ \xi_{\tau} + \int_S^\tau \bar f_u du \mid \Fc_S] \text{ a.s.}.
\end{eqnarray}  It is not difficult to see that $\hat\tau_S$ is also optimal for this linear optimal stopping problem. Now, from classical   results on  linear optimal stopping, $\tau^0_S$ is the minimal optimal stopping time for  problem \eqref{eq_123}; hence, we have  $\hat \tau_S \geq \tau^0_S$ a.s., which completes the proof. 

\end{proof}

\begin{Proposition}\label{Thm_continu} Let $(\xi_t, 0 
\leq t \leq T )$ be an r.u.s.c.  process in ${\cal S}^2$ and let $f$ be a predictable Lipschitz driver.  We assume, in addition, that $(\xi_t)$ is \emph{l.u.s.c. along stopping times.} Let $(Y,Z,k,h,A,C)$ be the solution to the reflected BSDE with parameters $(f,\xi)$ as in Definition \ref{def_solution_RBSDE}. Then, the process $A$ is continuous. 
\end{Proposition}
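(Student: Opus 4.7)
The plan is to show that $\Delta A_\tau = 0$ a.s.\ for every predictable stopping time $\tau$; since $A$ is predictable, nondecreasing and right-continuous, this implies that $A$ is continuous. By the Skorokhod condition \eqref{rRBSDE_A}, we have $\Delta A_\tau = \mathbf{1}_{\{Y_{\tau-} = \overline{\xi}_{\tau}\}} \Delta A_\tau$, so it is enough to show the vanishing of $\Delta A_\tau$ on the $\mathcal{F}_{\tau-}$-measurable set $\{Y_{\tau-} = \overline{\xi}_{\tau}\}$.

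The key auxiliary fact I would establish first is $\overline{\xi}_\tau \leq \xi_\tau$ a.s.\ for every predictable stopping time $\tau$, under the present l.u.s.c.\ along stopping times hypothesis on $\xi$. By a measurable selection argument of Dellacherie--Meyer type (cf.\ \cite{DM1}), one can build a sequence of stopping times $(\sigma_n)$ with $\sigma_n < \tau$ on $\{\tau > 0\}$, $\sigma_n \uparrow \tau$ and $\xi_{\sigma_n} \to \overline{\xi}_\tau$ a.s.; the l.u.s.c.\ along stopping times assumption then yields $\overline{\xi}_\tau = \limsup_n \xi_{\sigma_n} \leq \xi_\tau$ a.s. This measurable selection (passing from the pointwise $\limsup$ defining $\overline{\xi}$ to a sequence of stopping times that realizes it) is the step I expect to be the most delicate.

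Once the inequality is in hand, combining it with $Y_\tau \geq \xi_\tau$ gives, on $\{Y_{\tau-} = \overline{\xi}_\tau\}$, the bound $Y_{\tau-} = \overline{\xi}_\tau \leq \xi_\tau \leq Y_\tau$, i.e.\ $Y_\tau - Y_{\tau-} \geq 0$. From the RBSDE equation \eqref{RBSDE}, noting that the $f$-integral is absolutely continuous and that $C$ is right-continuous (so $C_{t-}$ is left-continuous, in particular at the predictable time $\tau$), one obtains $Y_\tau - Y_{\tau-} = \Delta M_\tau - \Delta A_\tau$, where $M := \int_0^\cdot Z_s\, dW_s + \int_0^\cdot \int_E k_s(e)\,\tilde{N}(ds,de) + h$ is the square-integrable martingale part. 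Taking the $\mathcal{F}_{\tau-}$-conditional expectation and using $E[\Delta M_\tau\mid\mathcal{F}_{\tau-}] = 0$ (for predictable $\tau$ and UI martingale $M$) together with the predictability of $A$ (so that $\Delta A_\tau$ is $\mathcal{F}_{\tau-}$-measurable) yields $\Delta A_\tau = E[Y_{\tau-} - Y_\tau \mid \mathcal{F}_{\tau-}]$.

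Since $\{Y_{\tau-} = \overline{\xi}_\tau\}$ is $\mathcal{F}_{\tau-}$-measurable (both $Y_{\tau-}$ and $\overline{\xi}_\tau$ being so, the latter by predictability of $\overline{\xi}$) and $Y_{\tau-} - Y_\tau \leq 0$ on it, one gets $\mathbf{1}_{\{Y_{\tau-} = \overline{\xi}_\tau\}} \Delta A_\tau \leq 0$; combined with $\Delta A_\tau \geq 0$ and the Skorokhod reduction of the first paragraph, this forces $\Delta A_\tau = 0$ a.s. Apart from the measurable selection step, everything else is routine bookkeeping with the Mertens-type decomposition of $Y$ and elementary properties of conditional expectations at predictable times.
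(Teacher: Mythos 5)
Your proof is correct, but it follows a genuinely different route from the paper's. The paper proves this proposition by reduction: it freezes the driver into the predictable process $\bar f(\omega,t):=f(\omega,t,Y_{t-},Z_t,k_t)$, invokes Theorem \ref{rf} to identify $Y$ with the value process of the \emph{classical linear} optimal stopping problem with running reward $\bar f$, and then cites the known result (Proposition B.10 in \cite{Kob}) that the predictable nondecreasing part of the Mertens decomposition of such a value process is continuous when the reward is r.u.s.c. and l.u.s.c. along stopping times. You instead work directly on the RBSDE: you reduce to showing $\Delta A_\tau=0$ at every predictable $\tau$, use the Skorokhod condition \eqref{rRBSDE_A} to localize on $\{Y_{\tau-}=\overline{\xi}_\tau\}$, compute $\Delta A_\tau=E[Y_{\tau-}-Y_\tau\mid\mathcal{F}_{\tau-}]$ from the martingale property of $M$ at predictable times (the $C_{t-}$ and $ds$ terms indeed contribute no left jump), and close the argument with the inequality $\overline{\xi}_\tau\leq\xi_\tau\leq Y_\tau$ a.s. All of these steps check out. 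What your approach buys is a self-contained argument that bypasses the optimal-stopping representation entirely and makes visible that only the l.u.s.c.-along-stopping-times hypothesis (not the r.u.s.c. one) is actually used for the continuity of $A$; what it costs is that the single nontrivial ingredient --- the fact that for a predictable time $\tau$ the pointwise left $\limsup$ $\overline{\xi}_\tau$ can be realized along an announcing sequence of stopping times, whence $\overline{\xi}_\tau\leq\xi_\tau$ a.s. under the l.u.s.c. hypothesis --- must be established by a section-theorem argument. You rightly flag this as the delicate step; it is a classical fact (it sits next to the result of \cite[Thm.~90, p.~225]{DM1} that the paper already uses for the predictability of $\overline{\xi}$, and is also the substance of the proof of the cited Proposition B.10 in \cite{Kob}), but you should give a precise reference or a proof of it rather than leaving it at the level of "a measurable selection argument of Dellacherie--Meyer type", since this is where the entire content of the proposition is concentrated.
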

\begin{proof}
Given the solution $(Y,Z,k,h,A,C)$  to the reflected BSDE with parameters $(f,\xi)$, we define the process $\bar f$ by
$$\bar{f}(\omega, t):=f(\omega,t, Y_{t-}(\omega), Z_t(\omega),k_t(\omega)).$$ 
The process $\bar f$ is a predictable process in $\H^2$. From the definition of $\bar f$ and from Definition \ref{def_solution_RBSDE}, we see that $(Y,Z,k,h,A,C)$ is the solution of the RBSDE with driver process $\bar f$ and obstacle $\xi$. By Theorem \ref{rf}  (on RBSDEs  with given \emph{driver} \emph{process} and  linear optimal stopping), we have that, for all $S\in\T_0$,
\begin{eqnarray}\label{eq_new0}
 Y_S= \esssup_{\tau \in \stops} E[ \xi_{\tau} + \int_S^\tau \bar f_u du \mid \Fc_S] \text{ a.s.},
\end{eqnarray} 
which is equivalent to 
$Y_S+\int_0^S \bar f_u du= \esssup_{\tau \in \stops} E[ \xi_{\tau} + \int_0^\tau \bar f_u du \mid \Fc_S] \text{ a.s.}$

From results on classical optimal stopping with linear expectations, we deduce that $A$ is continuous, as $(\xi_t)$  is r.u.s.c. and l.u.s.c. along stopping times (cf., e.g.,  Proposition B.10 in \cite{Kob} \footnote{Note that Proposition B.10 in \cite{Kob} also holds true in the case where the reward process is not necessarily nonnegative.}). 
\end{proof}
\section{${\cal E}^f$-Mertens decomposition of strong ${\cal E}^f$-supermartingales with respect to  a general filtration} \label{Mertens2}
By using the above characterization of the solution of the RBSDE with an r.u.s.c. obstacle as the value function of the non-linear optimal stopping problem \eqref{vvv} (cf. Theorem \ref{caracterisation}), we derive  an ${\cal E}^f$-Mertens decomposition of strong ${\cal E}^f$-supermartingales, which generalizes the one provided in \cite{MG} 
(cf. Theorem 5.2 in \cite{MG}) to the case of a general filtration.\footnote{An ${\cal E}^f$-Mertens decomposition was also shown in \cite{Bouchard} (at the same time as in \cite{MG}) in the case of a driver 
$f(t,y,z)$ which does not depend on $\mathpzc{k}$ by using a different approach.}

As mentioned before, this  is an important property in the present work which will allow us to address  the non-linear optimal stopping problem in the completely irregular case 
(cf. Section \ref{subsec_Ref}, more precisely the proof of Proposition \ref{Properties_Ref}, and also Theorem \ref{caranonlinear}).

\begin{Theorem}[${\cal E}^f$-Mertens decomposition]\label{calmertens}
Let $(Y_t)$ be a process  in ${\cal S}^2$. Let $f$ be a  Lipschitz driver satisfying Assumption \ref{Royer}.
The process  $(Y_t)$ is a strong ${\cal E}^f$-supermartingale 
 if and only if 
there exists a nondecreasing 
 right-continuous predictable process $A$ in ${\cal S}^2$ with $A_0=0$ and 
a nondecreasing 
 right-continuous adapted purely discontinuous process $C$ in ${\cal S}^2$ with $C_{0-}=0$, 
 as well as  three processes $Z \in   \H^2 $, $k \in  \mathbb{H}^2_\nu$ and $h \in {\cal M}^{2, \bot}$,  such that 
  %
  %
a.s. for all $t \in [0,T]$,
  %
  %
 \begin{equation}\label{cmertens}
 -d  Y_t  \displaystyle =  f(t,Y_{t}, Z_{t}, k_t)dt + dA_t + d C_{t-}-  Z_t dW_t-\int_E k_t(e)\tilde{N}(dt,de) - dh_t, \quad 0\leq t\leq T.
\end{equation}
This decomposition is unique. Moreover, a strong $\mathcal{E}^f$-supermartingale is necessarily r.u.s.c. 
\end{Theorem}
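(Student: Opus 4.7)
The plan is to establish both implications, derive the r.u.s.c.\ property in the course of the argument, and conclude with uniqueness.

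The \emph{sufficiency} direction is the straightforward one. Given the decomposition, r.u.s.c.\ of $Y$ is immediate: reading \eqref{cmertens} integrated on $[t,t']$ and letting $t' \downarrow t$, the only term that is not right-continuous at $t$ is $C_{t-}$, so $Y_t - Y_{t+} = C_t - C_{t-} = \Delta C_t \geq 0$ a.s. For the strong $\mathcal{E}^f$-supermartingale property, fix stopping times $S \leq \tau$. On $[S,\tau]$, equation \eqref{cmertens} is a BSDE with terminal datum $Y_\tau$ perturbed by the nondecreasing processes $A$ and $C_{-}$. Comparing with the BSDE carrying the same driver and terminal condition but no perturbation, the comparison theorem for BSDEs with jumps (valid in our framework thanks to Assumption \ref{Royer}) yields $Y_S \geq \mathcal{E}^f_{S,\tau}(Y_\tau)$ a.s.

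The \emph{necessity} direction is the substantive one. I first show that any strong $\mathcal{E}^f$-supermartingale $Y \in \mathcal{S}^2$ is r.u.s.c.\ along stopping times, arguing as in the corresponding step of \cite[Thm.~5.2]{MG}: for $\sigma \in \stopo$ and $(\tau_n) \subset \mathcal{T}_{\sigma,T}$ with $\tau_n \downarrow \sigma$, the supermartingale inequality gives $Y_\sigma \geq \mathcal{E}^f_{\sigma,\tau_n}(Y_{\tau_n})$, and a standard $L^2$-estimate for BSDEs on the shrinking interval $[\sigma,\tau_n]$, combined with $Y \in \mathcal{S}^2$, lets one pass to the limit and obtain $Y_\sigma \geq \limsup_n Y_{\tau_n}$ a.s. Once $Y$ is known to be r.u.s.c., apply Theorem \ref{rexiuni} to the RBSDE with parameters $(f,Y)$, taking $Y$ itself as obstacle, and let $(\tilde Y,Z,k,h,A,C)$ denote its unique solution. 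By the characterization theorem (Theorem \ref{caracterisation}), which is applicable because the obstacle is r.u.s.c.,
\begin{equation*}
\tilde Y_S \;=\; \esssup_{\tau \in \mathcal{T}_{S,T}} \mathcal{E}^f_{S,\tau}(Y_\tau) \quad \text{a.s., for every } S \in \stopo.
\end{equation*}
The strong $\mathcal{E}^f$-supermartingale property of $Y$ gives $\mathcal{E}^f_{S,\tau}(Y_\tau) \leq Y_S$ for every $\tau \in \mathcal{T}_{S,T}$, hence $\tilde Y_S \leq Y_S$; the choice $\tau = S$ yields $\tilde Y_S \geq \mathcal{E}^f_{S,S}(Y_S) = Y_S$. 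Therefore $\tilde Y$ and $Y$ are indistinguishable, and \eqref{cmertens} is precisely the defining equation of the RBSDE solution.

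Uniqueness of the decomposition follows from Lemma \ref{theoreme representation} (orthogonal decomposition of the martingale part) together with the standard uniqueness of the Mertens decomposition of a finite-variation, right-continuous process into a predictable nondecreasing part $A$ and a purely discontinuous adapted part $C$. The main obstacle is the r.u.s.c.\ step for a strong $\mathcal{E}^f$-supermartingale: one needs a careful $L^2$-continuity argument for the nonlinear conditional evaluation as the terminal time varies, and this is what actually requires work. Once that is in hand, the remainder is essentially a matter of assembling Theorems \ref{rexiuni} and \ref{caracterisation}.
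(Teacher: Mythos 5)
Your proposal is correct and follows essentially the same route as the paper: establish that a strong $\mathcal{E}^f$-supermartingale is r.u.s.c.\ (as in Lemma~5.1 of \cite{MG}), observe that $Y_S=\esssup_{\tau\in\T_{S,T}}\mathcal{E}^f_{S,\tau}(Y_\tau)$, and invoke Theorem~\ref{caracterisation} to identify $Y$ with the solution of the RBSDE whose obstacle is $Y$ itself, with the converse implication handled by the comparison/supermartingale lemma (Lemma~\ref{compref}). Your additional remarks on uniqueness (via Lemma~\ref{theoreme representation} and the uniqueness of the classical Mertens decomposition) are consistent with the paper's claim, which it leaves implicit.
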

\begin{proof} 
 Assume that $(Y_t)$ is a strong ${\cal E}^f$-supermartingale.  By the same arguments as in \cite{MG} (cf. Lemma 5.1 in \cite{MG}), it can be shown that the process $(Y_t)$ is  \emph{r.u.s.c.}
 Let $S$ $\in$ $\T_0$.
Since $(Y_t)$ is a strong ${\cal E}^f$-supermartingale, we derive that for all $\tau \in \T_S$, we have $Y_{S} \geq {\cal E}^f_{S ,\tau}(Y_{\tau})$ a.s.\, We  get
  $Y_S  \geq  {\rm ess} \sup_{\tau \in \T_S} {\cal E}^f_{S ,\tau}(Y_{\tau})$ a.s.
  Now, by definition of the essential supremum, $Y_S  \leq  {\rm ess} \sup_{\tau \in \T_S} {\cal E}^f_{S ,\tau}(Y_{\tau}) $ a.s.\, because $S \in \T_S$. 
 Hence, 
  $ Y_S  =  {\rm ess} \sup_{\tau \in \T_S} {\cal E}^f_{S ,\tau}(Y_{\tau})$ a.s.\,By Theorem \ref{caracterisation}, the process $(Y_t)$ coincides with the solution of the reflected BSDE associated with the (r.u.s.c.) obstacle $(Y_t)$, and thus admits the decomposition \eqref{cmertens}.\\
  The converse follows from Lemma \ref{compref} in the Appendix.
%
%
  \end{proof}


\section{Optimal stopping with non-linear $f$-expectation in the completely irregular case: the direct part of the approach}\label{irregularcase}
We now  turn to the study of the non-linear optimal stopping  problem \eqref{vvv} in the more difficult case where $(\xi_t)$ is \textit{completely irregular}. Since the process $(\xi_t)$ is not r.u.s.c.\,, 
 the inequality $ { Y}_{ \tau_S^{\varepsilon} }\leq \xi_{  \tau_S^{\varepsilon}}+\varepsilon$ (i.e. inequality \eqref{lambdabis}) does not necessarily hold (not even in the simplest case of linear expectations; cf., e.g.,  \cite{EK}).  
This prevents us from adopting here the approach used in the r.u.s.c. case to prove an infinitesimal  characterization of the  value  of the non-linear optimal stopping problem  in terms of the solution of an RBSDE.   Thus, when $\xi$ is completely irregular, we  have to proceed differently. We use  a combined approach which consists in a direct part and an RBSDE-part.  
This section is devoted to  the   \emph{ direct part} of our approach to the  non-linear optimal stopping problem \eqref{vvv}. 

\subsection{Preliminary results on the value family}
Let us first introduce the definition of an admissible family of random variables indexed by stopping times in $\stopo$ (or $\stopo$-system in the vocabulary of Dellacherie and Lenglart \cite{DelLen2}). 

\begin{Definition}\label{def.admi}
We say that a family 
$U=(U(\tau), \, \tau \in \stopo)$  is \emph{admissible} if it satisfies the following conditions 
\par
1. \quad for all
$\tau \in \stopo$, $U(\tau)$ is a real-valued  $\mathcal{F}_\tau$-measurable random variable. \par
 2. \quad  for all
$\tau,\tau'\in \stopo$, $U(\tau)=U(\tau')$ a.s.  on
$\{\tau=\tau'\}$.

Moreover, we say that an admissible family 
$U$  is \emph{square-integrable}  if for all
$\tau \in \stopo$, $U(\tau)$  is square-integrable. 
\end{Definition}
\begin{Lemma}[\emph{Admissibility of the family $V$}]\label{P1.Adm}
The family $V=(V(S), S\in \stopo)$ defined in \eqref{vvv} is a square-integrable admissible family.
\end{Lemma}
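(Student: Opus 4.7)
The plan is to verify the three requirements --- $\mathcal{F}_S$-measurability, square-integrability, and the consistency property of Definition \ref{def.admi} --- separately, relying on classical BSDE tools.

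The $\mathcal{F}_S$-measurability of $V(S)$ is immediate: each $\mathcal{E}^f_{S,\tau}(\xi_\tau)$ is $\mathcal{F}_S$-measurable by construction of the conditional $f$-expectation, and the essential supremum of such a family is $\mathcal{F}_S$-measurable. For square-integrability, I would rely on the standard \emph{a priori} estimates for BSDEs (in the spirit of Lemma \ref{Lemma_estimate}). Combining the estimate on differences of BSDE solutions with the estimate on the ``pure driver'' BSDE produces a pointwise bound
\[ |\mathcal{E}^f_{S,\tau}(\xi_\tau)|^2 \le c\, E\!\left[\esssup_{\sigma\in\stopo}|\xi_\sigma|^2 + \int_0^T f(s,0,0,0)^2\,ds\,\Big|\,\mathcal{F}_S\right] \quad\text{a.s.} \]
uniformly in $\tau\in\T_{S,T}$, with $c$ depending only on $T$ and the Lipschitz constant of $f$; call the right-hand side $Z$. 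Then $V(S) \le \esssup_{\tau}|\mathcal{E}^f_{S,\tau}(\xi_\tau)| \le Z^{1/2}$, and taking $\tau=S$ gives $V(S) \ge \mathcal{E}^f_{S,S}(\xi_S)=\xi_S \ge -|\xi_S|$, whence $|V(S)|\le Z^{1/2}+|\xi_S|$ and the square-integrability of $V(S)$ follows from $\xi \in \mathcal{S}^2$ and the integrability of the driver at $0$.

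The main work lies in the consistency property. I fix $\tau,\tau'\in\stopo$, set $A:=\{\tau=\tau'\}\in \mathcal{F}_{\tau\wedge\tau'}$, and aim to show $V(\tau)=V(\tau')$ on $A$. By symmetry it suffices to prove $V(\tau){\bf 1}_A \le V(\tau'){\bf 1}_A$ a.s. Given $\sigma\in\T_{\tau,T}$, I paste to define $\hat\sigma := \sigma\, {\bf 1}_A + T\, {\bf 1}_{A^c}$; using $A\in\mathcal{F}_{\tau\wedge\tau'}$ together with the fact that $\tau\wedge\tau'\le \sigma$ on $A$, one checks $\{\hat\sigma \le t\}\in\mathcal{F}_t$ for every $t$, so that $\hat\sigma\in\T_{\tau',T}$. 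Since $\hat\sigma=\sigma$, $\tau=\tau'$ and $\xi_{\hat\sigma}=\xi_\sigma$ on $A$, the local property of BSDE solutions (a standard consequence of uniqueness localised to $A$) yields
\[ {\bf 1}_A\,\mathcal{E}^f_{\tau,\sigma}(\xi_\sigma) = {\bf 1}_A\,\mathcal{E}^f_{\tau',\hat\sigma}(\xi_{\hat\sigma}) \le {\bf 1}_A\, V(\tau') \quad\text{a.s.} \]
Taking the essential supremum over $\sigma\in\T_{\tau,T}$ produces $V(\tau){\bf 1}_A \le V(\tau'){\bf 1}_A$ a.s.; the reverse inequality follows by symmetry.

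The delicate point, where I expect most of the care is needed, is the combination of the pasting step and the invocation of the local property of the $f$-expectation in the general filtration setting. Both are standard BSDE manipulations, but they deserve to be applied carefully here; the rest of the proof is a routine application of the BSDE a priori estimates and of the definition of essential supremum.
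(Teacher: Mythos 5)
Your proof is correct and follows essentially the same route as the paper's: measurability and square-integrability from the definition of the conditional $f$-expectation together with the standard BSDE a priori estimates, and the consistency property via the same pasting $\hat\sigma=\sigma{\bf 1}_A+T{\bf 1}_{A^c}$ combined with the local property of conditional $f$-expectations (the paper invokes Proposition A.3 of \cite{MG2} for exactly this step). The only difference is cosmetic: you spell out the uniform $L^2$ bound that the paper leaves implicit.
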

\begin{proof} 
The proof uses arguments similar to those used  in the "classical" case of linear expectations (cf., e.g., \cite{KQR}), combined with some properties of $f$-expectations.\\
 For each $S\in\stopo$,  V(S) is an $\cf_S$-measurable  square-integrable random variable, due to  the definitions of the conditional $f$-expectation and  of the essential supremum (cf.  \cite{Neveu}). 
Let us prove Property 2 of the  definition of admissibility. Let $S$ and $S'$ be two stopping times in $\stopo$. We set $A:=\{S=S'\}$ and we show that $V(S)=V(S')$, $P$-a.s. on $A$. 
For each $\tau\in\stops$, we set  $\tau_A:= \tau{\bf 1}_A+T{\bf 1}_{A^c}$. We have $\tau_A\geq S'$ a.s.
By using  the fact that $S=S'$ a.s. on $A$,  the fact that $\tau_A=\tau$ a.s. on $A$, and a standard property of conditional $f$-expectations (cf., e.g., Proposition  A.3 in \cite{MG2} which  can be extended without difficulty to the framework of general filtration), we obtain 
\begin{equation*}
\begin{aligned}
{\bf 1}_A\ce^{f}_{S,\tau}[\xi_\tau]&={\bf 1}_A\ce^{f}_{S',\tau}[\xi_\tau]= 
\ce^{f^\tau{\bf 1}_A}_{S',T}[\xi_\tau{\bf 1}_A]=
\ce^{f^{\tau_A}{\bf 1}_A}_{S',T}[\xi_{\tau_A}{\bf 1}_A]=
{\bf 1}_A\ce^{f}_{S',\tau_A}[\xi_{\tau_A}]\leq {\bf 1}_AV(S'),
\end{aligned}
\end{equation*}
 where  $f^{\tau} (t,y,z,\mathpzc{k}):= f(t,y,z,\mathpzc{k}){\bf 1}_{\{t\leq \tau\}}.$
By taking the $\esssup$ over $\stops$ on both sides,  we get 
${\bf 1}_AV(S)\leq {\bf 1}_AV(S').$ We obtain the converse inequality by interchanging the roles of $S$ and $S'$.

\end{proof}

\begin{Lemma}[Optimizing sequence]\label{P1.2a}
For each $S\in\stopo$, there exists a sequence $(\tau_n)_{n \in \mathbb{N}}$ of stopping times in $\stops$ 
such that the sequence $(
 {\cal E}^{f}_{S,\tau_n} (\xi_{\tau_n}))_{n \in \mathbb{N}}$ is nondecreasing and\\
$V(S)  = \lim_{n \to \infty} \uparrow {\cal E}^{f}_{S,\tau_n} (\xi_{\tau_n})$ a.s.
\end{Lemma}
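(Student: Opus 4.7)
The proof will follow the classical route for essential suprema of upward-directed families (Neveu, \emph{Discrete-parameter martingales}, Proposition VI-1-1). The strategy is to show first that the family
$$\mathcal{V}(S):=\{{\cal E}^{f}_{S,\tau}(\xi_\tau):\tau\in\stops\}$$
is closed under pairwise maxima, and then to invoke Neveu's theorem, which automatically produces a nondecreasing sequence from the family whose a.s.\ limit equals $\esssup\mathcal{V}(S)=V(S)$.

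\textbf{Step 1 (Stability under pairwise maxima / upward directedness).} Given $\tau_1,\tau_2\in\stops$, set
$$A:=\{{\cal E}^{f}_{S,\tau_1}(\xi_{\tau_1})\geq {\cal E}^{f}_{S,\tau_2}(\xi_{\tau_2})\}\in\cf_S$$
and define $\tau_3:=\tau_1{\bf 1}_A+\tau_2{\bf 1}_{A^c}$. Since $A\in\cf_S\subset\cf_{\tau_1}\cap\cf_{\tau_2}$, the random time $\tau_3$ is a stopping time in $\stops$. The goal is to prove the pasting identity
$${\cal E}^{f}_{S,\tau_3}(\xi_{\tau_3})={\bf 1}_A{\cal E}^{f}_{S,\tau_1}(\xi_{\tau_1})+{\bf 1}_{A^c}{\cal E}^{f}_{S,\tau_2}(\xi_{\tau_2})={\cal E}^{f}_{S,\tau_1}(\xi_{\tau_1})\vee{\cal E}^{f}_{S,\tau_2}(\xi_{\tau_2})\quad\text{a.s.}$$
For this I would adapt the computation already used in the proof of Lemma~\ref{P1.Adm}: writing $(\tau_i)_A:=\tau_i{\bf 1}_A+T{\bf 1}_{A^c}$ for $i=1,3$, the standard property of conditional $f$-expectations (cf.\ Proposition A.3 in \cite{MG2}, valid in the general-filtration framework) yields
$${\bf 1}_A\,{\cal E}^{f}_{S,\tau_1}(\xi_{\tau_1})={\cal E}^{f^{\tau_1}{\bf 1}_A}_{S,T}(\xi_{\tau_1}{\bf 1}_A)={\cal E}^{f^{(\tau_1)_A}{\bf 1}_A}_{S,T}(\xi_{(\tau_1)_A}{\bf 1}_A)={\cal E}^{f^{(\tau_3)_A}{\bf 1}_A}_{S,T}(\xi_{(\tau_3)_A}{\bf 1}_A)={\bf 1}_A\,{\cal E}^{f}_{S,\tau_3}(\xi_{\tau_3}),$$
using that $(\tau_1)_A=(\tau_3)_A$ by construction. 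The symmetric argument on $A^c$ gives ${\bf 1}_{A^c}{\cal E}^{f}_{S,\tau_2}(\xi_{\tau_2})={\bf 1}_{A^c}{\cal E}^{f}_{S,\tau_3}(\xi_{\tau_3})$, and combining the two identities with the definition of $A$ yields the desired pasting formula.

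\textbf{Step 2 (Conclusion via Neveu).} Since $\mathcal{V}(S)$ is upward directed and $V(S)=\esssup\mathcal{V}(S)$, Neveu's theorem on essential suprema provides a sequence $(\tau_n)\subset\stops$ such that ${\cal E}^{f}_{S,\tau_n}(\xi_{\tau_n})\uparrow V(S)$ a.s. (one may pass from any approximating sequence to a nondecreasing one by successively taking pairwise maxima as in Step~1).

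\textbf{Main obstacle.} The only nontrivial point is the pasting identity in Step~1. Although it is classical for linear conditional expectations, for $f$-expectations it requires the identification of the BSDE solution up to time $\tau_3$ with the appropriate combination of the solutions up to $\tau_1$ and $\tau_2$ on $A$ and $A^c$ respectively. This is the same mechanism already invoked in Lemma~\ref{P1.Adm}, so no new technical tool is needed.
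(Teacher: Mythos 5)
Your proposal is correct and follows essentially the same route as the paper: both establish stability under pairwise maximization via the pasting identity for conditional $f$-expectations (using the localization property ${\bf 1}_A\,\mathcal{E}^f_{S,\tau}(\xi_\tau)=\mathcal{E}^{f^\tau{\bf 1}_A}_{S,T}(\xi_\tau{\bf 1}_A)$ from Proposition A.3 in \cite{MG2}) and then conclude by Neveu's result on essential suprema of upward-directed families. The only cosmetic difference is that your Step 1 routes the computation through the auxiliary times $(\tau_i)_A$, whereas the paper exploits directly that $\nu=\tau$ and $f^{\nu}{\bf 1}_A=f^{\tau}{\bf 1}_A$ on $A$.
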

\begin{proof}
Due to a classical result on essential suprema (cf. \cite{Neveu}), it is sufficient to show that,
for each $S\in\stopo$, 
the family  $({\cal E}^{}_{S,\tau} (\xi_\tau), \; \tau \in\stops)$ 
is stable under pairwise maximization. 
Let us fix $S\in\stopo$. Let $\tau\in\stops$ and $\tau'\in\stops$.  
We define  $A:=\{\, {\cal E}^{f}_{S,\tau'} (\xi_{\tau'})\leq 
 {\cal E}^{f}_{S,\tau} (\xi_\tau)\,\}$ and
 $\nu:=\tau {\bf 1} _A+\tau' {\bf 1} _{A^c}$. We have  $A \in \mathcal{F}_S$ and $ \nu\in\stops$.
 We compute $ {\bf 1} _A{\cal E}^{f}_{S,\nu} (\xi_\nu)= $ 
 $ {\cal E}^{ f^{\nu}{\bf 1} _A }_{S,T} (\xi_\nu {\bf 1} _A)= {\cal E}^{ f^{ \tau}{\bf 1} _A }_{S,T} (\xi_\tau {\bf 1} _A)
$ $=$ $ {\bf 1} _A{\cal E}^{ f}_{S,\tau} (\xi_\tau)$ a.s. \,  Similarly, we show  $ {\bf 1} _{A^c}{\cal E}^{f}_{S,\nu} (\xi_\nu)={\bf 1} _{A^c}{\cal E}^{f}_{S,\tau'} (\xi_{\tau'})$.  It follows that
 $ {\cal E}^{f}_{S,\nu} (\xi_\nu)=$  $ {\cal E}^{f}_{S,\tau} (\xi_\tau){\bf 1} _A+
 {\cal E}^{f}_{S,\tau'}(\xi_{\tau'}){\bf 1} _{A^c}=$
 $ {\cal E}^{f}_{S,\tau} (\xi_\tau)\vee $ $ 
 {\cal E}^{ f}_{S,\tau'} (\xi_{\tau'})$,
which shows the stability under pairwise maximization and concludes the proof. 
\end{proof}
\begin{Definition}[${\cal E}^{f}$-\emph{supermartingale family}]
An admissible square-integrable family $U:=(U(S), \; S\in\stopo)$ is said to be an  ${\cal E}^{f}$-\emph{supermartingale family}  if for all 
$S, S^{'}$ $ \in\stopo$ such that $S \leq S^{'}$ a.s., 
$
{\cal E}^{f}_{S,S'} (U(S'))  \leq  U(S) $ a.s.
\end{Definition}

\begin{Definition}[\emph{Right-uppersemicontinuous family}]\label{Def_rusc}
An admissible family $U:=$ $(U(S), \; S\in\stopo)$ is said to be  a \emph{right-uppersemicontinuous} (along stopping times) \emph{family} if, for any  $(\tau_n)$ nonincreasing sequence in $\stopo$  and any $\tau$ in $\stopo$
such that $\tau=\lim \downarrow \tau_n$, we
have 
$U(\tau)\geq \limsup_{n\to\infty} U(\tau_n)$ a.s. 
\end{Definition}

\begin{Lemma}\label{Prop_right}
Let $U:=(U(S), \; S\in\stopo)$ be an   ${\cal E}^{f}$-supermartingale family. Then, $(U(S), \; S\in\stopo)$ is a right-uppersemicontinuous (along stopping times) family.
\end{Lemma}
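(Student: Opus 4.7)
The plan is to combine the $\mathcal{E}^f$-supermartingale inequality for $U$ with a Fatou-type estimate for $f$-expectations. Let $(\tau_n) \subset \stopo$ be a non-increasing sequence with $\tau_n \downarrow \tau \in \stopo$. The starting point is the $\mathcal{E}^f$-supermartingale property applied at $\tau \le \tau_n$:
\[
U(\tau) \;\ge\; \mathcal{E}^f_{\tau,\tau_n}(U(\tau_n)) \quad \text{a.s., for every } n. \qquad (\ast)
\]
Setting $\phi := \limsup_n U(\tau_n)$, I would first observe that, since $\tau_n \downarrow \tau$ and the filtration $\mathbb{F}$ is right-continuous, one has $\bigcap_n \mathcal{F}_{\tau_n} = \mathcal{F}_{\tau+} = \mathcal{F}_\tau$; as each $U(\tau_n)$ is $\mathcal{F}_{\tau_n}$-measurable, the random variable $\phi$ is $\mathcal{F}_\tau$-measurable.

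The next step is to extract a subsequence (still denoted $(\tau_n)$) along which $U(\tau_n) \to \phi$ a.s. Taking the $\liminf$ in $(\ast)$, it then remains to prove $\liminf_n \mathcal{E}^f_{\tau,\tau_n}(U(\tau_n)) \ge \phi$ a.s. For this, I would invoke a Fatou-type lemma for BSDEs with Lipschitz driver, analogous to Lemma A.5 of \cite{DQS5}, whose argument extends verbatim to the present general-filtration setting: namely, if $\xi_n \to \xi$ a.s., $\sigma_n \to \sigma$ a.s., and $(\xi_n)$ admits a uniform $L^2$-lower bound, then $\liminf_n \mathcal{E}^f_{\tau,\sigma_n}(\xi_n) \ge \mathcal{E}^f_{\tau,\sigma}(\xi)$. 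Applied with $\xi_n = U(\tau_n)$ and $\sigma_n = \tau_n$, this yields
\[
\liminf_n \mathcal{E}^f_{\tau,\tau_n}(U(\tau_n)) \;\ge\; \mathcal{E}^f_{\tau,\tau}(\phi) \;=\; \phi \quad \text{a.s.,}
\]
and hence $U(\tau) \ge \phi = \limsup_n U(\tau_n)$ a.s., which is the desired right-uppersemicontinuity property.

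The main obstacle is producing the uniform $L^2$-lower bound required by the Fatou lemma. I would obtain it by applying the $\mathcal{E}^f$-supermartingale property at the terminal time $T$, which gives $U(\tau_n) \ge \mathcal{E}^f_{\tau_n, T}(U(T)) = Y^0_{\tau_n}$, where $Y^0$ is the first component of the solution of the BSDE with Lipschitz driver $f$ and terminal value $U(T)$ at time $T$. Standard $\mathcal{S}^2$-estimates for BSDEs yield $\sup_{t \in [0,T]} |Y^0_t| \in L^2$, so the inequality $U(\tau_n) \ge -\sup_t |Y^0_t|$ furnishes the required $L^2$-lower bound uniformly in $n$, and the Fatou lemma applies as claimed.
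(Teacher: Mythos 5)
Your overall strategy (the supermartingale inequality at $\tau\le\tau_n$ combined with a limit theorem for $f$-expectations, plus the lower bound $U(\tau_n)\ge\mathcal{E}^f_{\tau_n,T}(U(T))$ to produce integrability) is reasonable, but there is a genuine gap at the step ``extract a subsequence along which $U(\tau_n)\to\phi$ a.s.'' with $\phi=\limsup_n U(\tau_n)$. No such deterministic subsequence exists in general: the pointwise $\limsup$ of a sequence of random variables is attained along an $\omega$-dependent subsequence, and a single subsequence $(n_k)$ cannot realize it almost surely (take $X_n(\omega)$ equal to the $n$-th binary digit of $\omega$ on $[0,1]$ with Lebesgue measure: $\limsup_n X_n=1$ a.s., yet every subsequence has $\liminf$ equal to $0$ a.s.). Even extracting a subsequence that converges a.s. to \emph{some} limit requires convergence in probability of the original sequence, which you do not have. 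As a result, your Fatou step can only give $U(\tau)\ge\lim_k U(\tau_{n_k})$ for whichever subsequence you fix, and this does not dominate $\limsup_n U(\tau_n)$.

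The way around this --- and the route the paper takes --- is to prove $U(\tau)\ge\lim_n U(\tau_n)$ only for nonincreasing sequences along which $\lim_n U(\tau_n)$ exists a.s., and then to invoke Lemma 5 of Dellacherie--Lenglart \cite{DelLen2}, which upgrades this to $U(\tau)\ge\limsup_n U(\tau_n)$ for arbitrary nonincreasing sequences; that upgrade is not a formality, as it uses the admissibility (recollement) property of the family to build $\omega$-dependent selections among the $\tau_n$ that are again stopping times. For that restricted class of sequences the paper also bypasses any Fatou lemma: by consistency and the supermartingale property the sequence $\mathcal{E}^f_{\tau,\tau_n}(U(\tau_n))$ is nondecreasing and bounded above by $U(\tau)$, and the continuity of BSDEs with respect to terminal time and terminal condition identifies its limit as $\lim_n U(\tau_n)$. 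If you wished to keep your Fatou-type argument instead, you would additionally have to justify a $\liminf$ version with only a one-sided $L^2$ bound (the cited Lemma A.5 of \cite{DQS5} is the $\limsup$/upper-bound version) and to check that $\phi$ lies in $L^2(\mathcal{F}_\tau)$ so that $\mathcal{E}^f_{\tau,\tau}(\phi)$ is well defined; but the subsequence extraction is the step that actually breaks the proof.
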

\begin{proof}
Let $\tau \in \stopo$ and let $(\tau_n)\in \stopo^{\N}$ be a nonincreasing sequence of stopping times such that $\lim_{n \rightarrow + \infty} \tau_n = \tau$ a.s. and  for all $n\in\N$, $\tau_n > \tau$ a.s. on 
$\{\tau < T\}$, and such that $\lim_{n \rightarrow + \infty} U(\tau_n)$ exists a.s.\,
As $U$  is an  ${\cal E}^{f}$-supermartingale 
family and as the sequence $(\tau_n)$ is nonincreasing, we have 
${\cal E}^{f}_{\tau,\tau_n} (U(\tau_n))  \leq {\cal E}^{f}_{\tau,\tau_{n+1}} (U(\tau_{n+1}))\leq U(\tau)$ a.s. 
Hence, the sequence $({\cal E}^{f}_{\tau,\tau_n} (U(\tau_n)))_n$ is nondecreasing and 
$U(\tau)\geq \lim \uparrow {\cal E}^{f}_{\tau,\tau_n} (U(\tau_n)). $ This inequality, combined with the property of continuity of  BSDEs with respect to terminal time and terminal condition (cf.   \cite[Prop. A.6]{QuenSul} which still holds in the case of a general filtration) gives
$$U({\tau}) \geq \lim_{n \rightarrow + \infty}  {\cal E}^f_{\tau ,\tau_n}(U({\tau_n}))= {\cal E}^f_{\tau ,\tau}(\lim_{n \rightarrow + \infty} U({\tau_n}))= \lim_{n \rightarrow + \infty} U({\tau_n})\quad {\rm a.s.}$$ 
By Lemma 5 of Dellacherie and Lenglart \cite{DelLen2} \footnote{The chronology $\Theta$ (in the vocabulary and notation of \cite{DelLen2}) which we work with here is the chronology of all stopping times, that is,  $\Theta=\stopo$; hence $[\Theta]=\Theta=\stopo$.}, the family $(U(S))$ is thus right-uppersemicontinuous (along stopping times).
\end{proof}

\begin{Theorem}\label{Prop_sup} The value family $V=(V(S), \; S\in\stopo)$ defined in \eqref{vvv}  is an  ${\cal E}^{f}$-supermartingale family. In particular, $V=(V(S), \; S\in\stopo)$ is a right-uppersemicontinuous (along stopping times) family
 in the sense of Definition \ref{Def_rusc}.
\end{Theorem}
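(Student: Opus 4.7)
The plan is to prove the supermartingale property first and then obtain the right-uppersemicontinuity as an immediate consequence of Lemma \ref{Prop_right}, since the second assertion is explicitly stated as a corollary of the first.

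Fix $S, S' \in \stopo$ with $S \leq S'$ a.s. I want to show $\mathcal{E}^f_{S,S'}(V(S')) \leq V(S)$ a.s. First, by Lemma \ref{P1.2a} applied at $S'$, there exists a sequence $(\tau_n) \subset \T_{S',T}$ such that the sequence $\mathcal{E}^f_{S',\tau_n}(\xi_{\tau_n})$ is nondecreasing and $V(S') = \lim_n \uparrow \mathcal{E}^f_{S',\tau_n}(\xi_{\tau_n})$ a.s. Since $V(S') \in L^2(\cf_{S'})$ (by Lemma \ref{P1.Adm}), the convergence also holds in $L^2$ by dominated convergence (the sequence is bounded above by $V(S')$ and below by $\mathcal{E}^f_{S',\tau_0}(\xi_{\tau_0})$, both square-integrable). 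Then by continuity of BSDEs with respect to the terminal condition (a standard $L^2$-stability result, cf.\ the a priori estimates invoked earlier) together with the monotonicity of $\mathcal{E}^f_{S,S'}(\cdot)$ under Assumption \ref{Royer}, we can pass to the limit:
\begin{equation*}
\mathcal{E}^f_{S,S'}(V(S')) = \lim_{n\to\infty}\uparrow \mathcal{E}^f_{S,S'}\bigl(\mathcal{E}^f_{S',\tau_n}(\xi_{\tau_n})\bigr) \quad\text{a.s.}
\end{equation*}

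Next, I use the consistency (flow property) of conditional $f$-expectations: since $S \leq S' \leq \tau_n$ a.s., we have $\mathcal{E}^f_{S,S'}(\mathcal{E}^f_{S',\tau_n}(\xi_{\tau_n})) = \mathcal{E}^f_{S,\tau_n}(\xi_{\tau_n})$ a.s. Moreover, $\tau_n \in \T_{S',T} \subset \T_{S,T}$, so by the very definition of $V(S)$ as an essential supremum over $\T_{S,T}$, we have $\mathcal{E}^f_{S,\tau_n}(\xi_{\tau_n}) \leq V(S)$ a.s.\ for each $n$. Combining these two facts yields $\mathcal{E}^f_{S,S'}(V(S')) \leq V(S)$ a.s., which is the supermartingale inequality. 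Together with the admissibility and square-integrability already established in Lemma \ref{P1.Adm}, this shows that $V$ is an $\mathcal{E}^f$-supermartingale family.

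For the second assertion, I simply apply Lemma \ref{Prop_right} to $V$: every $\mathcal{E}^f$-supermartingale family is right-uppersemicontinuous along stopping times in the sense of Definition \ref{Def_rusc}. The main obstacle I anticipate is the justification of the limit interchange $\mathcal{E}^f_{S,S'}(\lim_n\uparrow \mathcal{E}^f_{S',\tau_n}(\xi_{\tau_n})) = \lim_n \uparrow \mathcal{E}^f_{S,S'}(\mathcal{E}^f_{S',\tau_n}(\xi_{\tau_n}))$; this relies on the $L^2$-continuity of BSDE solutions with respect to the terminal condition (which is standard and available in the general filtration setting via the a priori estimates recalled earlier in the paper), but it is the only nontrivial analytic step — everything else is a bookkeeping consequence of consistency, monotonicity, and the definition of the essential supremum.
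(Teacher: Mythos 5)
Your proof is correct and follows essentially the same route as the paper's: optimizing sequence from Lemma \ref{P1.2a}, continuity of BSDEs with respect to the terminal condition to pass to the limit, consistency of $\mathcal{E}^f$, the definition of the essential supremum, and then Lemma \ref{Prop_right} for the second assertion. The only difference is that you spell out the dominated-convergence justification for the limit interchange, which the paper leaves implicit.
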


\begin{proof}
We know from Lemma \ref{P1.Adm} that $V=(V(S), S\in \stopo)$ is a square-integrable admissible family.
Let $S\in \stopo$ and $S'\in\stops$.  We will  show that
${\cal E}^{f}_{S,S'}(V(S'))\leq  V(S)$ a.s., which will prove that $V$ is an  ${\cal E}^{f}$-supermartingale family.  
By Lemma \ref{P1.2a}, there exists a sequence $(\tau_n)_{n \in \mathbb{N}}$ of stopping times such that $\tau_n\geq S'$ a.s. and  $V(S')  = \lim_{n \to \infty} \uparrow {\cal E}^{f}_{S',\tau_n} (\xi_{\tau_n}) \quad
\mbox{\rm a.s.}$ By using this equality, the property of continuity of BSDEs, and the consistency of conditional $f$-expectation, we get
$${\cal E}^{f}_{S,S'}(V(S'))  = {\cal E}^{f}_{S,S'}(\lim_{n \to \infty} \uparrow {\cal E}^{f}_{S',\tau_n} (\xi_{\tau_n}))=\lim_{n \to \infty}{\cal E}^{f}_{S,S'}({\cal E}^{f}_{S',\tau_n} (\xi_{\tau_n}))=\lim_{n \to \infty}{\cal E}^{f}_{S,\tau_n}(\xi_{\tau_n})\leq V(S). $$
Hence, $V$ is an   ${\cal E}^{f}$-supermartingale family. This property, together with Lemma \ref{Prop_right}, yields that $V$ is a  right-uppersemicontinuous (along stopping times) family. 
\end{proof}
\subsection{Aggregation and Snell characterization}
Using the above results on the value family $V=(V(S), \; S\in\stopo)$, we  show the following theorem, which generalizes some results of classical optimal stopping theory (more precisely, the assertion (i) from Lemma 
\ref{pro}) to the case of an optimal stopping problem with $f$-expectation.

\begin{Theorem}[Aggregation and Snell characterization]\label{Prop_agrege}
There exists a unique  right- uppersemicontinuous optional process, denoted by $(V_t)_{t\in[0,T]}$, which aggregates the value family $V=(V(S), \; S\in\stopo)$. Moreover, $(V_t)_{t\in[0,T]}$ is the  ${\cal E}^f$-{\em Snell envelope} of the pay-off process $\xi$, 
that is, the smallest strong 
${\cal E}^f$-supermartingale greater than or equal to $\xi$.
\end{Theorem}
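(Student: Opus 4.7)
The plan is to split the argument into an aggregation step, verification of the Snell characterization (existence and minimality), and a short uniqueness remark.

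\emph{Step 1 (aggregation).} The starting point is that the family $V=(V(S),\,S\in\stopo)$ has already been shown to be a square-integrable admissible family (Lemma \ref{P1.Adm}) and to be right-uppersemicontinuous along stopping times (Theorem \ref{Prop_sup}, in the sense of Definition \ref{Def_rusc}). Given these two properties, I would invoke an aggregation result from the general theory of processes \`a la Dellacherie--Lenglart \cite{DelLen2}: any admissible $\stopo$-system that is right-uppersemicontinuous along stopping times can be aggregated by an optional process whose paths are right-uppersemicontinuous, and this aggregating process is unique up to indistinguishability. Applying this result to the family $V$ produces an optional right-uppersemicontinuous process $(V_t)_{t\in[0,T]}$ satisfying $V_S=V(S)$ a.s.\ for every $S\in\stopo$, which is the first assertion. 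Uniqueness is then a standard fact, since two optional processes that coincide at every stopping time are indistinguishable.

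\emph{Step 2 (Snell characterization).} Three properties must be checked. First, $V_t\geq \xi_t$ is immediate from the defining formula \eqref{vvv} upon taking $\tau=S$ (note that $S\in\stops$). Second, the fact that $(V_t)$ is a strong $\mathcal{E}^f$-supermartingale translates directly from the $\mathcal{E}^f$-supermartingale family property of $V$ established in Theorem \ref{Prop_sup} via the aggregation of Step~1. Third, for minimality, let $(U_t)$ be any strong $\mathcal{E}^f$-supermartingale satisfying $U_t\geq \xi_t$ a.s.\ for all $t$. For every $S\in\stopo$ and every $\tau\in\stops$, the $\mathcal{E}^f$-supermartingale property of $U$ combined with the monotonicity of $\mathcal{E}^f_{S,\tau}(\cdot)$ (valid under Assumption \ref{Royer}) yields
\begin{equation*}
U_S \;\geq\; \mathcal{E}^f_{S,\tau}(U_\tau) \;\geq\; \mathcal{E}^f_{S,\tau}(\xi_\tau) \qquad \text{a.s.}
\end{equation*}
Taking the essential supremum over $\tau\in\stops$ on the right-hand side gives $U_S\geq V(S)=V_S$ a.s., which proves that $(V_t)$ is the smallest such supermartingale.

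\emph{Main obstacle.} The only genuinely nontrivial ingredient is Step~1: we are not working with an a priori given cadlag supermartingale, only with a family indexed by stopping times, so the existence of an aggregating optional process must be extracted from general-theory-of-processes results for right-uppersemicontinuous admissible families. Once that aggregation is in place, the Snell-envelope statements follow almost mechanically from the supermartingale-family structure of $V$ (Theorem \ref{Prop_sup}) and the monotonicity of the conditional $f$-expectation.
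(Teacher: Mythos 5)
Your proposal is correct and follows essentially the same route as the paper: aggregation of the right-uppersemicontinuous admissible family via the Dellacherie--Lenglart theorem (the paper cites Theorem 4 of \cite{DelLen2}), transfer of the $\mathcal{E}^f$-supermartingale family property of Theorem \ref{Prop_sup} to the aggregating process, and the identical minimality argument combining the strong supermartingale property of a competitor with the monotonicity of $\mathcal{E}^f_{S,\tau}(\cdot)$ before taking the essential supremum. No gaps.
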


\begin{proof}
By Theorem \ref{Prop_sup},     the value family $V=(V(S), \; S\in\stopo)$ is  a right-uppersemicontinuous family (or a right-uppersemicontinuous  $\stopo$-system  in the vocabulary of Dellacherie-Lenglart \cite{DelLen2}).  Applying Theorem 4 of  Dellacherie-Lenglart (\cite{DelLen2}),  gives the existence of a unique (up to indistinguishability) right-uppersemicontinuous optional  process $(V_t)_{t\in[0,T]}$ which \emph{aggregates} the value family $(V(S), \; S\in\stopo)$. 
From this aggregation property, namely the property  $V_S=V(S)$ a.s.  for each $S\in\stopo$, and from Theorem \ref{Prop_sup}, we deduce that the process $(V_t)_{t\in[0,T]}$ is a strong 
${\cal E}^f$-supermartingale. Moreover, we have $V_S=V(S) \geq \xi_S$ a.s. for each $S\in\stopo$, which implies that
$V_t\geq \xi_t$, for all $t\in[0,T]$, a.s. \\
Let us now prove that  the process $(V_t)_{t\in[0,T]}$ is \emph{the smallest}  strong 
${\cal E}^f$-supermartingale greater than or equal to $\xi$. Let $(V'_t)_{t\in[0,T]}$  be a strong 
${\cal E}^f$-supermartingale such that $V'_t\geq \xi_t$, for all $t\in[0,T]$, a.s. Let $S\in\stopo$. We have 
$V'_\tau\geq \xi_\tau$ a.s. for all $\tau\in\stops$. Hence, ${\cal E}^{f}_{S,\tau}(V'_{\tau})\geq {\cal E}^{f}_{S,\tau}(\xi_{\tau})$ a.s., where we have used the monotonicity of the conditional $f$-expectation. On the other hand, by using the   strong ${\cal E}^f$-supermartingale property of the process $(V'_t)_{t\in[0,T]}$, we have $V'_S\geq {\cal E}^{f}_{S,\tau}(V'_{\tau})$ a.s.   for all $\tau\in\stops$. Hence, $V'_S\geq {\cal E}^{f}_{S,\tau}(\xi_{\tau})$ a.s. for all $\tau\in\stops$. By taking the essential supremum over $\tau\in\stops$ in the  inequality, we get $V'_S\geq \esssup_{\tau\in\stops} {\cal E}^{f}_{S,\tau}(\xi_{\tau})=V(S)= V_S$ a.s. 
Hence, for all $S\in\stopo$, we have $V'_S\geq V_S$ a.s., which yields that 
$V'_t\geq V_t$, for all $t\in[0,T]$, a.s.
The proof is thus complete.
\end{proof}

\section{Non-linear Reflected BSDE with completely irregular obstacle and general filtration: useful properties}\label{sec4}
Our aim now is  
 to establish an infinitesimal characterization for the non-linear problem \eqref{vvv}    in terms of the solution of a non-linear RBSDE (thus generalizing  Theorem  \ref{rf} from the classical linear case to the non-linear case).  In order to do so, we need to establish first some results on non-linear  RBSDEs with \emph{completely irregular} obstacles, in particular, a comparison result for such RBSDEs.  This section is devoted to these results (this is the RBSDE-part of our approach to problem \eqref{vvv}).    The results from this section  extend and complete our work from \cite{MG}, where an assumption of right-uppersemicontinuity on the obstacle is made.   
 Let us note that  the proof of the comparison theorem  from \cite{MG} cannot be adapted to the completely irregular framework considered here; instead, we rely on a Tanaka-type formula for strong (irregular) semimartingales which we also establish.  
 
 \begin{Remark}\label{Rk_direct_approach}{\emph{(A "bottle-neck" of the direct approach)}}
 One might wonder whether  the infinitesimal characterization for  the non-linear optimal stopping problem \eqref{vvv} can be obtained by pursuing the direct study of the value process $(V_t)$ of problem \eqref{vvv}, similarly to what was done in the classical linear case in  Sub-section \ref{subsection_linear}. In the classical case,  we applied  Mertens decomposition for $(V_t)$; then,  we showed directly the minimality properties for the processes   $A^d$ and  $A^c$ (cf. Lemmas \ref{propertiesvalue_prel} and  \ref{propertiesvalue_Ac_prel}) by using the martingale property on the interval $[S, \tau_S^{\lambda}]$ from Lemma \ref{pro_prel}(iii), which itself relies on Maingueneau's penalization approach (cf. also Remarks \ref{Rk_minimal} and \ref{Rk_Maingueneau}). In the non-linear case, Mertens decomposition is generalized   by the $\mathcal{E}^f$-Mertens decomposition 
(cf. Theorem \ref{calmertens}).
 However, the  analogue in the non-linear case of the martingale property  of Lemma \ref{pro}[(iii)] (namely,  the $\mathcal{E}^f$-martingale property) cannot be obtained via Maingueneau's approach (not even in the case of nonnegative $\xi$ and under the additional assumption $f(t,0,0,0)=0$ which ensures the non-negativity of $\mathcal{E}^f$)  due to the \emph{lack of convexity} of the functional  $\mathcal{E}^f$. 
 \end{Remark}

\subsection{Tanaka-type formula}\label{subsect_comparison}

The following lemma will be used in the proof of the comparison theorem for RBSDEs with irregular obstacles.
The lemma can be seen as an extension of Theorem 66 of \cite[Chapter IV]{Protter} from the case of right-continuous semimartingales to the more general case of strong optional semimartingales.

\begin{Lemma}[Tanaka-type formula]\label{Lemma_Protter}
Let $X$ be a (real-valued) strong optional semimartingale with decomposition 
$X=X_0+M+A+B$, where $M$ is a local (cadlag) martingale, $A$ is a right-continuous adapted process of finite variation such that $A_0=0$, $B$ is a  left-continuous adapted purely discontinuous process of finite variation such that  $B_0=0$. Let $f:\R\longrightarrow \R$ be a convex function. Then, $f(X)$ is a strong optional semimartingale. Moreover, denoting by $f'$ the left-hand derivative of the convex function $f$, we have  

$$f(X_t)=f(X_0)+\int_{]0,t]} f'(X_{s-})d(A_s+M_s) 
+\int_{[0,t[} f'(X_{s})dB_{s+}+K_t,$$
where $K$ is a nondecreasing adapted process (which is in general neither left-continuous nor right-continuous)
 such that
$$ \Delta K_t= f(X_t)-f(X_{t-})- f'(X_{t-}) \Delta X_t \text{ and }
 \Delta_+ K_t= f(X_{t+})-f(X_{t})- f'(X_{t}) \Delta_+ X_t.$$
\end{Lemma}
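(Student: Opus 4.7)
The strategy is to first establish the formula for smooth convex $f$ via the Gal'chouk--Lenglart formula (the It\^o formula for ladlag / strong optional semimartingales; cf.~\cite{Galchouk}, \cite{Lenglart}) and then to extend to the general convex case by a left-sided regularization of $f$. Since $M$ and $A$ are right-continuous while $B$ is left-continuous, the decomposition $X = X_0 + M + A + B$ yields the fundamental identities
\[
X_{t-} = X_0 + M_{t-} + A_{t-} + B_t, \quad \Delta X_t = \Delta M_t + \Delta A_t, \quad \Delta_+ X_t = \Delta_+ B_t,
\]
which explain why the integrator on $]0,t]$ (paired with $f'(X_{s-})$) is $d(M_s+A_s)$ while the integrator on $[0,t[$ (paired with $f'(X_s)$) is $dB_{s+}$.

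For $f \in C^2$, the Gal'chouk--Lenglart formula applied to $X$ reads
\begin{align*}
f(X_t) - f(X_0) &= \int_{]0,t]} f'(X_{s-}) \, d(M_s + A_s) + \int_{[0,t[} f'(X_s) \, dB_{s+} + \tfrac{1}{2} \int_{]0,t]} f''(X_{s-}) \, d\langle M^c \rangle_s \\
&\quad + \sum_{0 < s \le t} \bigl[f(X_s) - f(X_{s-}) - f'(X_{s-}) \Delta X_s\bigr] \\
&\quad + \sum_{0 \le s < t} \bigl[f(X_{s+}) - f(X_s) - f'(X_s) \Delta_+ X_s\bigr].
\end{align*}
Under convexity, $f'' \ge 0$ pointwise, and by the elementary inequality $f(b) - f(a) - f'(a)(b-a) \ge 0$ every summand of both jump sums is nonnegative. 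Thus the last three terms add up to a nondecreasing adapted process $K^{(f)}$ with the prescribed left and right jumps, which proves the lemma for $C^2$ convex $f$.

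For a general convex $f$, fix a smooth nonnegative kernel $\rho_n$ with support in $[0, 1/n]$ satisfying $\int \rho_n = 1$, and set $f_n := f * \rho_n$. Each $f_n$ is $C^\infty$ and convex; moreover $f_n \to f$ locally uniformly, and $f'_n \to f'$ pointwise (the left-sided support ensures convergence of $f'_n(x)$ to the left-continuous left derivative $f'(x)$). Define
\[
K_t := f(X_t) - f(X_0) - \int_{]0,t]} f'(X_{s-}) \, d(M_s + A_s) - \int_{[0,t[} f'(X_s) \, dB_{s+}.
\]
The two integrals make sense because $f'$ is locally bounded (being monotone) and $X$ is pathwise locally bounded. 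Applying the established $C^2$ formula to $f_n$ and passing to the limit by dominated convergence for the three integrals, one obtains $K_t = \lim_n K^{(f_n)}_t$ pointwise. Since each $K^{(f_n)}$ is nondecreasing, so is $K$. Its left and right jumps are read off directly from the definition of $K$:
\[
\Delta K_t = f(X_t) - f(X_{t-}) - f'(X_{t-}) \Delta X_t, \quad \Delta_+ K_t = f(X_{t+}) - f(X_t) - f'(X_t) \Delta_+ X_t,
\]
and both are nonnegative by convexity. The decomposition itself then shows that $f(X)$ is a strong optional semimartingale.

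The main technical obstacle is the Gal'chouk--Lenglart step, which requires careful bookkeeping of the two different integration domains ($]0,t]$ for the cadlag part and $[0,t[$ for the left-continuous part) and of the left and right jump contributions. A secondary subtlety is the \emph{direction} of mollification: one must mollify from the left (support of $\rho_n$ in $[0,1/n]$), otherwise one would end up with the right derivative $f'_+$ rather than the left derivative $f'$ appearing in the statement, and the jump identities would be shifted accordingly. Convexity plays a dual role throughout: it yields the nonnegativity of every jump correction, and it provides $f''_n \ge 0$ for the smooth approximants, which guarantees the monotonicity of the continuous local-time-like contribution absorbed into $K$.
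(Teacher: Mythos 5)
Your proposal is correct and follows essentially the same route as the paper: approximate $f$ by smooth convex functions whose derivatives increase to the left derivative $f'$, apply the Gal'chouk--Lenglart formula to each approximant, and pass to the limit (the paper handles the $d(M+A)$-integral exactly as in Protter's Theorem 66 and the $dB_{+}$-integral by dominated convergence, then localizes from the bounded case). The only presentational difference is that the paper first truncates $X$ to be bounded and then localizes in a separate step, whereas you invoke local boundedness directly; this does not change the substance of the argument.
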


\begin{proof}
Our proof follows the proof of Theorem 66 of \cite[Chapter IV]{Protter} with suitable changes.\\
{\em Step 1.} We assume that $X$ is bounded; more precisely, we assume that there exists $N\in\N$ such that $|X|\leq N$. We know (cf. \cite{Protter}) that there exists a sequence $(f_n)$ of  twice continuously differentiable convex  functions such that $(f_n)$ converges to $f$,  and $(f'_n)$ converges to  $f'$ from below. By applying Gal'chouk-Lenglart's formula (cf., e.g., Theorem  A.3 in \cite{MG})  to $f_n(X_t)$, we obtain for all $\tau\in\stopo$
\begin{equation}\label{eq_Ito_usuel}
 f_n(X_\tau)=f_n(X_0)+ \int_{]0,\tau]} f'_n(X_{s-})d(A_s+M_s) 
+\int_{[0,\tau[} f'_n(X_{s})dB_{s+}+K^n_\tau, \text{ a.s., where }
 \end{equation} 
\begin{equation}\label{eq_K^n}
 \begin{aligned}
 K_\tau^n:= &\sum_{0<s\leq \tau}\left[ f_n(X_s)-f_n(X_{s-})- f'_n(X_{s-}) \Delta X_s\right]
 +\sum_{0\leq s<\tau}\left[ f_n(X_{s+})-f_n(X_{s})- f'_n(X_{s}) \Delta_+ X_s\right]\\
 &+
 \frac 1 2 \int_{]0,\tau]}  f''_n(X_{s-})d\langle M^{c},M^{c}\rangle_s \;\text{ a.s. }
 \end{aligned}
 \end{equation} 
 We show that $(K_\tau^n)$ is a convergent sequence by showing that the other terms in Equation \eqref{eq_Ito_usuel} converge. The convergence $\int_{]0,\tau]} f'_n(X_{s-})d(A_s+M_s)\underset{n\to\infty}{\longrightarrow} \int_{]0,\tau]} f'(X_{s-})d(A_s+M_s)$  is shown by using the same arguments as in the proof of \cite[Thorem 66, Ch. IV]{Protter}. The convergence of the term  
 $\int_{[0,\tau[} f'_n(X_{s})dB_{s+}$, which is specific to the non-right-continuous case, is shown by using dominated convergence. We conclude that $(K_\tau^n)$ converges and we set $K_\tau:= \lim_{n\to\infty} K_\tau^n$.  
  The process $(K_t)$ is adapted as the limit of adapted processes. Moreover, we have from Eq. \eqref {eq_K^n} and from the convexity of $f_n$ that,  for each $n$, $K_t^n$ is nondecreasing in $t$. Hence, the limit $K_t$ is nondecreasing.  \\
{\em Step 2.}  We treat the general case where $X$ is not necessarily bounded by using a localization argument  similar  to that used in \cite[Th. 66, Ch. IV]{Protter}. 
 \end{proof}

\subsection{Comparison theorem}\label{subsect_RBSDEb}

\begin{Theorem}[Comparison]\label{thmcomprbsde}
 Let $\xi\in\mathcal{S}^2$, $\xi'\in\mathcal{S}^2$  be two 
 processes. Let $f$  and $f'$
 be  Lipschitz drivers   satisfying Assumption~\ref{Royer}. Let $(Y, Z, k, h, A, C)$ (resp.
 $(Y', Z', k', h', A', C')$)  be
the solution of the RBSDE associated with obstacle $\xi$  (resp. $\xi'$) and with driver $f$ (resp. $f'$). 
If $\xi_t\le \xi'_t$, $0\leq t \leq T$ a.s. and 
$f(t,Y'_t, Z'_t, k'_t)\le f'(t,Y'_t, Z'_t, k'_t)$, $0\leq t \leq T$ $dP\otimes dt$-a.s., 
%
then,    
$Y_{t}\le
Y'_{t}, \,0\leq t \leq T \text{ a.s. } $
\end{Theorem}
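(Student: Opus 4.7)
The plan is to apply the Tanaka-type formula of Lemma \ref{Lemma_Protter} to the convex function $\phi(x) = (x^+)^2$ evaluated at the difference $\bar Y_t := Y_t - Y'_t$, and to deduce that $\mathbb{E}[\phi(\bar Y_t)] = 0$ for every $t$. Subtracting the two RBSDEs yields the strong optional semimartingale decomposition $\bar Y = \bar Y_0 + \tilde M + \tilde A + \tilde B$, where $\tilde M_t := \int_0^t \bar Z\,dW + \int_0^t\!\!\int_E \bar k\,\tilde N + \bar h_t$ is the cadlag martingale part (with $\bar Z := Z-Z'$, etc.), where $\tilde A_t := -\int_0^t \bigl(f(s,Y_s,Z_s,k_s)-f'(s,Y'_s,Z'_s,k'_s)\bigr)ds - (A_t - A'_t)$ is the right-continuous predictable finite-variation part, and where $\tilde B_t := -(C_{t-} - C'_{t-})$ is the left-continuous finite-variation part. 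Note that $\phi(\bar Y_T) = 0$ since $\bar Y_T = \xi_T - \xi'_T \leq 0$.

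Applying Lemma \ref{Lemma_Protter} backwards from $t$ to $T$, using a localization argument to remove the local martingale contribution, and taking expectations yields
\[
\mathbb{E}[\phi(\bar Y_t)] = -\mathbb{E}\!\int_{(t,T]} 2\bar Y_{s-}^+ \, d\tilde A_s \;-\; \mathbb{E}\!\int_{[t,T)} 2\bar Y_s^+ \, d\tilde B_{s+} \;-\; \mathbb{E}[K_T - K_t].
\]
The $K$-term is non-positive because $K$ is nondecreasing. The $\tilde A$-contribution splits into a drift term involving $f-f'$ plus $2\bar Y_{s-}^+ dA_s - 2\bar Y_{s-}^+ dA'_s$; by the Skorokhod condition \eqref{rRBSDE_A}, on the event $\{\bar Y_{s-} > 0\}$ one has $Y_{s-} > Y'_{s-} \geq \overline{\xi}'_s \geq \overline{\xi}_s$, so $dA_s$ gives no mass there, while $-2\bar Y_{s-}^+ dA'_s \leq 0$. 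The $\tilde B_{s+}$-contribution, which equals $\int 2\bar Y_s^+\,(dC_s - dC'_s)$, is handled analogously using \eqref{rRBSDE_C}: if $\Delta C_s > 0$ then $Y_s = \xi_s$, so on $\{\bar Y_s > 0\}$ we would have $Y_s > Y'_s \geq \xi'_s \geq \xi_s = Y_s$, a contradiction, making that integral vanish, while $-2\bar Y_s^+ dC'_s \leq 0$.

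It then remains to control the drift term $\mathbb{E}\!\int 2\bar Y_{s-}^+\bigl(f(s,Y,Z,k)-f'(s,Y',Z',k')\bigr)\,ds$. Writing this as the sum $\bigl[f(s,Y,Z,k) - f(s,Y',Z',k')\bigr] + \bigl[f(s,Y',Z',k') - f'(s,Y',Z',k')\bigr]$, the second bracket is $\leq 0$ by hypothesis; linearizing the first bracket via the Lipschitz assumption on $f$ together with Assumption \ref{Royer} (which furnishes a representation of the $k$-variation with coefficient $\gamma \geq -1$) and then using Young's inequality to absorb $\bar Z$ and $\bar k$ terms against the corresponding martingale quadratic-variation contributions that are already embedded in $K$ leads to an estimate of the form
\[
\mathbb{E}[\phi(\bar Y_t)] \leq C \int_t^T \mathbb{E}[\phi(\bar Y_s)]\,ds.
\]
Gronwall's lemma forces $\mathbb{E}[\phi(\bar Y_t)] = 0$, hence $\bar Y_t \leq 0$ a.s.\ for every $t$, and optionality of $\bar Y$ extends this to all stopping times.

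The main obstacle is the careful handling of the non-right-continuity: the process $C$ enters via $C_{t-}$, contributing to the \emph{left-continuous} part $\tilde B$ and therefore to the $d\tilde B_{s+}$ integral appearing in the Tanaka-type formula, while $A$ contributes to the usual right-continuous predictable part. One must verify that the Skorokhod conditions \eqref{rRBSDE_A}--\eqref{rRBSDE_C}, formulated respectively with $\overline{\xi}$ and $\xi$ at the appropriate left/right limits, match exactly the integrators $dA$ (using $\bar Y_{s-}^+$) and $dC$ (using $\bar Y_s^+$) that appear in the backward Tanaka formula, so that both contributions have the correct sign. The nonnegativity of the nondecreasing remainder $K$ (at both left and right jumps) provided by Lemma \ref{Lemma_Protter} is crucial, since it simultaneously absorbs the convexity correction coming from jumps of $\bar M$ (including those carried by $h$ and $\int\!\!\int \bar k\,\tilde N$) and from the right-jumps induced by $C, C'$.
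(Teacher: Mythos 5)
Your reduction to showing $E[\phi(\bar Y_t)]=0$ with $\phi(x)=(x^+)^2$, the strong optional semimartingale decomposition of $\bar Y$, and the treatment of the $dA$, $dA'$, $dC$, $dC'$ integrals via the Skorokhod conditions \eqref{rRBSDE_A}--\eqref{rRBSDE_C} (pairing $\bar Y_{s-}^+$ with $dA$ and $\bar Y_s^+$ with $dC$) are sound and agree with what the paper does. Where you diverge is the choice of convex function and the treatment of the driver: the paper applies the Tanaka-type formula to $x\mapsto x^+$ (so no quadratic terms ever appear), linearizes $\bar f$ via Assumption \ref{Royer}, and multiplies by the nonnegative exponential weight $\Gamma$ solving $d\Gamma_s=\Gamma_{s-}[\delta_s ds+\beta_s dW_s+\int_E\gamma_s(e)\tilde N(ds,de)]$; the hypothesis $\gamma\ge-1$ then enters only through $\Gamma\ge0$ and the signs $1+\gamma_s(p_s)\ge0$ multiplying $\Delta\bar A_s$ and $\Delta K^{d,-}_s$, while the orthogonality of $\bar h$ (Remark \ref{2i}) kills the residual bracket term. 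You instead aim for a Young--Gronwall estimate on $(\bar Y^+)^2$.

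The gap lies in the step ``using Young's inequality to absorb $\bar Z$ and $\bar k$ terms against the \ldots quadratic-variation contributions that are already embedded in $K$.'' First, the bookkeeping is internally inconsistent: you discard $-E[K_T-K_t]$ as nonpositive and then propose to reuse pieces of $K$; you would have to split $K$ explicitly and keep the pieces you need. Second, and fatally, the jump convexity correction of $(x^+)^2$ on $\{\bar Y_{s-}>0\}$ equals $(\Delta\bar Y_s)^2-((\bar Y_s)^-)^2$ (negative part squared), which falls short of $(\Delta\bar Y_s)^2$ precisely when a Poisson jump carries $\bar Y$ from positive to negative; moreover $(\Delta\bar Y_s)^2$ mixes $\bar k_s(p_s)$ with $\Delta\bar A_s$ and $\Delta\bar h_s$, so even after compensation it does not deliver the clean $\int{\bf 1}_{\{\bar Y_{s-}>0\}}\|\bar k_s\|^2_\nu\,ds$ needed to absorb the cross term $2\bar Y_{s-}^+\langle\gamma_s,\bar k_s\rangle_\nu$ (and the $\Delta\bar h$ cross terms again require Remark \ref{2i}). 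As written, your argument only exploits the bound $\|\gamma\|_\nu\le C$ and never uses $\gamma\ge-1$ in a load-bearing way; since comparison is known to fail for general Lipschitz drivers with jumps absent such a condition (cf. the counterexample of \cite{BBP}), no argument of this shape can close. The deficit $((\bar Y_s)^-)^2$ is exactly what $\gamma\ge-1$ must be used to control, and the paper's $\Gamma$-weighting of $\bar Y^+$ is the device that accomplishes this.
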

\begin{proof}
We set $\bar Y_t = Y_t - Y'_t$, $\bar Z_t = Z_t - Z'_t$, $\bar k_t=k_t-k'_t$,  $\bar A_t=A_t - A'_t$, $\bar C_t=C_t - C'_t$, $\bar h_t= h_t-h'_t$,
and $\bar f_t=f(t,Y_{t-},Z_t,k_t)-f'(t,Y'_{t-},Z'_t,k'_t)$.
   Then,\\
  $
  -d \bar Y_t  \displaystyle =  \bar f_t dt + d\bar A_t + d \bar C_{t-}- \bar Z_t dW_t-\int_E \bar k_t(e) \tilde N(dt,de) -d\bar h_t,$ with 
   $\bar Y_{T}  = 0.
   $

Applying Lemma \ref{Lemma_Protter}  to  the positive part of $\bar Y_t$,  
we obtain   
\begin{equation}
\begin{aligned}
\bar Y_t^+=&-\int_{]t,T]}{\bf 1}_{\{ \bar Y_{s-}>0\}} \bar Z_s dW_s-\int_{]t,T]}\int_{E}{\bf 1}_{\{ \bar Y_{s-}>0\}} \bar k_s(e) \tilde N(ds,de)
 - \int_{]t,T]}{\bf 1}_{\{ \bar Y_{s-}>0\}} d \bar h_s\\
&+ \int_{]t,T]}{\bf 1}_{\{ \bar Y_{s-}>0\}} \bar f_s ds +\int_{]t,T]}{\bf 1}_{\{ \bar Y_{s-}>0\}} d\bar A_s+\int_{[t,T[}{\bf 1}_{\{ \bar Y_{s}>0\}} d\bar C_s+ (K_t-K_T). 
\end{aligned}
\end{equation}

We set 
$\delta_t:= \frac{f(t,Y_{t-},Z_t,k_t)-f(t,Y'_{t-},Z_t,k_t)}{ Y_{t-}-Y'_{t-}}{\bf 1}_{\{ \bar Y_{t-}\neq 0\}}$ and 
 $\beta_t:= \frac{f(t,Y'_{t-},Z_t,k_t)-f(t,Y'_{t-},Z'_t,k_t)}{ Z_{t}-Z'_{t}}{\bf 1}_{\{ \bar Z_{t}\neq 0\}}$. 
Due to the Lipschitz-continuity of $f$, the processes $\delta$ and $\beta$ are bounded. 
We note that $\bar f_t=\delta_t\bar Y_t+\beta_t\bar Z_t+f(Y'_{t-}, Z'_t, k_t)-f(Y'_{t-}, Z'_t, k'_t)+ \varphi_t$, where 
$\varphi_t:=  f(Y'_{t-}, Z'_t, k'_t)-f'(Y'_{t-}, Z'_t, k'_t)$. Using this, together with Assumption \ref{Royer}, we obtain
\begin{equation}\label{eq_new_inequality}
\bar f_t \leq  \delta_t \bar Y_t + \beta_t \bar Z_t + \langle \gamma_t\,,\, \bar k_t  \rangle_\nu, + \varphi_t \;\; 0 \leq t \leq T,\quad 
dP \otimes dt-{\rm a.e.},
\end{equation}
where we have set $\gamma_t:= \theta _t^{Y'_{t-}, Z'_t, k'_t, k_t}.$
For $\tau \in {\cal T}_{0,T}$, let $\Gamma_{\tau,\cdot}$ be  the unique solution of the following forward SDE $d \Gamma_{\tau,s}  =  \Gamma_{\tau,s-} [ \delta_s ds + \beta_s d W_s + \int_E\gamma_s(e)\tilde N(ds,de)]$ with initial condition (at the initial time $\tau$)
$\Gamma_{\tau,\tau}  = 1. $
To simplify the notation, we denote  $\Gamma_{\tau,s}$ by $\Gamma_s$ for $s \geq \tau$.

%
By applying  Gal'chouk-Lenglart's formula to the product $(\Gamma_t\bar Y_t^+)$, 
and by using that $\langle h^c, W \rangle =0$,
 we get 
 \begin{equation}\label{eq_product}
\begin{aligned}
&\Gamma_\tau\bar Y_\tau^+=  - (M_\theta  -  M_{\tau}) -
\int_\tau^\theta \Gamma_{s}(\bar Y^+_{s-}\delta_s+\bar Z_s{\bf 1}_{\{ \bar Y_{s-}>0\}}\beta_s -\bar f_s{\bf 1}_{\{ \bar Y_{s-}>0\}})ds\\
&+\int_\tau^\theta \Gamma_{s-}{\bf 1}_{\{ \bar Y_{s-}>0\}}d\bar A_s^c+\sum_{\tau < s\leq \theta} \Gamma_{s-}{\bf 1}_{\{ \bar Y_{s-}>0\}}\Delta\bar A_s-\int_\tau^\theta \Gamma_{s-}dK_s^c-\int_\tau^\theta \Gamma_{s-}dK_s^{d,-}
\\
&+
\int_\tau^\theta \Gamma_{s}{\bf 1}_{\{ \bar Y_{s}>0\}}d\bar C_s-\int_\tau^\theta \Gamma_{s}dK_s^{d,+}
%
 %
-\sum_{\tau < s\leq \theta} \Delta \Gamma_s\Delta \bar Y^+_s. 
\end{aligned}
\end{equation}
where the process $M$ is  defined by $M:= M^W + M^N + M^h$, with 
$M^W_t:=\int_0^t \Gamma_{s-}({\bf 1}_{\{ \bar Y_{s-}>0\}}\bar Z_s+\bar Y^+_{s-}\beta_s) dW_s$,  
and 
$M^N_t:=\int_0^t \int_E \Gamma_{s-}(\bar k_s(e){\bf 1}_{\{ \bar Y_{s-}>0\}}+\bar Y^+_{s-}\gamma_s(e))\tilde N(ds,de)$, and $M^h_t:=\int_0^t  \Gamma_{s^-}{\bf 1}_{\{ \bar Y_{s^-}>0\}}d \bar h_s$. Note that by classical arguments (which use Burkholder-Davis-Gundy inequalities), the stochastic integrals $M^W$, $M^N$ and $M^h$ are martingales. Hence, $M$ is a martingale (equal to zero in expectation).

 %

%
By definition of $\Gamma$, we have
$\Gamma_{\tau}  = 1$, which gives  that $\Gamma_\tau\bar Y_\tau^+= \bar Y_\tau^+$.
Moreover, we have $\int_\tau^\theta \Gamma_{s}{\bf 1}_{\{ \bar Y_{s}>0\}}d\bar C_s=\int_\tau^\theta \Gamma_{s}{\bf 1}_{\{ \bar Y_{s}>0\}}d C_s - \int_\tau^\theta \Gamma_{s}{\bf 1}_{\{ \bar Y_{s}>0\}}d C'_s$. For the first term, it holds\\
  $\int_\tau^\theta \Gamma_{s}{\bf 1}_{\{ \bar Y_{s}>0\}}d C_s=0$. 
 Indeed,  $\{ \bar Y_{s}>0\}=\{ Y_{s}>Y'_{s}\}\subset \{ Y_{s}>\xi_{s}\}$ (as $ Y'_{s}\geq \xi'_{s}\geq \xi_{s}$). This, together with the Skorokhod condition for $C$  gives the equality. For the second term, it holds $- \int_\tau^\theta \Gamma_{s}{\bf 1}_{\{ \bar Y_{s}>0\}}d C'_s\leq 0$, as $\Gamma\geq 0$ and  $d C'$ is a nonnegative measure. Hence, $\int_\tau^\theta \Gamma_{s}{\bf 1}_{\{ \bar Y_{s}>0\}}d\bar C_s\leq 0$. 
 Similarly, we obtain  $\int_\tau^\theta \Gamma_{s-}{\bf 1}_{\{ \bar Y_{s-}>0\}}d\bar A_s^c\leq 0$. Indeed, $\int_\tau^\theta \Gamma_{s-}{\bf 1}_{\{ \bar Y_{s-}>0\}}d\bar A_s^c=\int_\tau^\theta \Gamma_{s-}{\bf 1}_{\{ \bar Y_{s-}>0\}}d A_s^c-\int_\tau^\theta \Gamma_{s-}{\bf 1}_{\{ \bar Y_{s-}>0\}}d A_s^{'c}.$  For the first term, we have $\int_\tau^\theta \Gamma_{s-}{\bf 1}_{\{ \bar Y_{s-}>0\}}d A_s^c=0$.  This is due to the fact that $\{ \bar Y_{s-}>0\}=\{ Y_{s-}>Y'_{s-}\}\subset \{ Y_{s-}>\overline \xi_{s}\}$ (as $ Y'_{s}\geq \xi'_{s}\geq \xi_{s}$, and hence $ Y'_{s-}\geq \overline \xi_{s}$), together with the Skorokhod condition for $A^c$. For the second term, we have $-\int_\tau^\theta \Gamma_{s-}{\bf 1}_{\{ \bar Y_{s-}>0\}}d A_s^{'c}\leq 0.$ We also have  
$-\int_\tau^\theta \Gamma_{s-}dK_s^c\leq 0$ and $-\int_\tau^\theta \Gamma_{s}dK_s^{d,+}\leq 0$. 
Hence, 
\begin{equation}\label{eq_product2}
\begin{aligned}
\bar Y_\tau^+ \leq & - (M_\theta  -  M_{\tau}) -  
\int_\tau^\theta \Gamma_{s}(\bar Y^+_{s-}\delta_s+\bar Z_s{\bf 1}_{\{ \bar Y_{s-}>0\}}\beta_s -\bar f_s{\bf 1}_{\{ \bar Y_{s-}>0\}})ds\\&+\sum_{\tau < s\leq \theta} \Gamma_{s-}{\bf 1}_{\{ \bar Y_{s-}>0\}}\Delta\bar A_s
-\int_\tau^\theta \Gamma_{s-}dK_s^{d,-}
 -\sum_{\tau < s\leq \theta} \Delta \Gamma_s\Delta \bar Y^+_s. 
\end{aligned}
\end{equation}
We compute the last term $\sum_{\tau < s\leq \theta} \Delta \Gamma_s\Delta \bar Y^+_s$.\\
 Let $(p_s)$ be the point process associated with the Poisson random measure $N$ 
 (cf. \cite[VIII Section 2. 67]{DM2}, or \cite[Section III \S d]{J}). 
  We have 
$\Delta \Gamma_s=\Gamma_{s-} \gamma_s (p_s)$ and $\Delta \bar Y^+_s={\bf 1}_{\{ \bar Y_{s-}>0\}}\bar k_s (p_s)-{\bf 1}_{\{ \bar Y_{s-}>0\}}\Delta \bar A_s +\Delta K_s^{d,-}
 + {\bf 1}_{\{ \bar Y_{s-}>0\}}\Delta \bar h_s.
$
Hence, 
\begin{equation}\label{eq_last_term}
\begin{aligned}
&\sum_{\tau < s\leq \theta} \Delta \Gamma_s\Delta \bar Y^+_s= \\
&=\sum_{\tau < s\leq \theta} \Gamma_{s-} {\bf 1}_{\{ \bar Y_{s-}>0\}} \gamma_s(p_s) \bar k_s (p_s)-\sum_{\tau < s\leq \theta}\Gamma_{s-} \gamma_s(p_s)({\bf 1}_{\{ \bar Y_{s-}>0\}}\Delta \bar A_s -\Delta K_s^{d,-}
-{\bf 1}_{\{ \bar Y_{s-}>0\}}\Delta \bar h_s
)
\\
&= \int_{\tau}^\theta \int_{E}\Gamma_{s-} {\bf 1}_{\{ \bar Y_{s-}>0\}} \gamma_s(e) \bar k_s(e)  N(ds,de)-\sum_{\tau < s\leq \theta}\Gamma_{s-} \gamma_s(p_s)({\bf 1}_{\{ \bar Y_{s-}>0\}}\Delta \bar A_s -\Delta K_s^{d,-}
-{\bf 1}_{\{ \bar Y_{s-}>0\}}\Delta \bar h_s
)
\\
&=\int_{\tau}^\theta \int_{E}\Gamma_{s-} {\bf 1}_{\{ \bar Y_{s-}>0\}} \gamma_s(e) \bar k_s(e)  \tilde N(ds,de)+\int_{\tau}^\theta \Gamma_{s-} {\bf 1}_{\{ \bar Y_{s-}>0\}} \langle\gamma_s, \bar k_s\rangle_\nu ds\\
&-\sum_{\tau < s\leq \theta}\Gamma_{s-} {\bf 1}_{\{ \bar Y_{s-}>0\}}\gamma_s(p_s)\Delta \bar A_s+ \sum_{\tau < s\leq \theta}\Gamma_{s-} \gamma_s (p_s)\Delta K_s^{d,-} 
 + \sum_{\tau < s\leq \theta} \Gamma_{s-}{\bf 1}_{\{ \bar Y_{s-}>0\}}
 \gamma_s(p_s)  \Delta \bar h_s.  
\end{aligned}
\end{equation}
By plugging this expression in equation \eqref{eq_product2} and by putting together  the terms in $"ds"$, the terms in $"dK_s^{d,-}"$, and the terms in $"\Delta \bar A_s"$, we get 
\begin{equation}\label{eq_product3}
\begin{aligned}
\bar Y_\tau^+\leq &  - (M_\theta  -  M_{\tau}) 
-  
\int_\tau^\theta \Gamma_{s-}(\bar Y^+_{s-}\delta_s+\bar Z_s{\bf 1}_{\{ \bar Y_{s-}>0\}}\beta_s + {\bf 1}_{\{ \bar Y_{s-}>0\}} \langle\gamma_s, \bar k_s\rangle_\nu-\bar f_s{\bf 1}_{\{ \bar Y_{s-}>0\}})ds\\
&+\sum_{\tau < s\leq \theta} \Gamma_{s-}{\bf 1}_{\{ \bar Y_{s-}>0\}}(1+ \gamma_s(p_s))\Delta \bar A_s^{}-\sum_{\tau < s\leq \theta} \Gamma_{s-}(1+ \gamma_s(p_s))\Delta K_s^{d,-}\\
%
&  -(\tilde M_\theta- \tilde M_{\tau}) -   
\int_\tau ^{\theta} d[ \,\bar h\,, \int_0^\cdot \int_{E}\Gamma_{s-} {\bf 1}_{\{ \bar Y_{s-}>0\}} \gamma_s(e) \tilde N(ds,de)\,]_s, 
\end{aligned}
\end{equation}
where $\tilde M_t:= \int_{0}^t \int_{E}\Gamma_{s-} {\bf 1}_{\{ \bar Y_{s-}>0\}} \gamma_s(e) \bar k_s(e)  \tilde N(ds,de)$. Note that by classical arguments (as for $M$ above), the stochastic integral $\tilde M$ is a martingale, equal to zero in expectation.

We have $-\int_\tau^\theta \Gamma_{s-}(\bar Y^+_{s-}{\bf 1}_{\{ \bar Y_{s-}>0\}}\delta_s+\bar Z_s{\bf 1}_{\{ \bar Y_{s-}>0\}}\beta_s + {\bf 1}_{\{ \bar Y_{s-}>0\}} \langle\gamma_s, \bar k_s\rangle_\nu-\bar f_s{\bf 1}_{\{ \bar Y_{s-}>0\}})ds\leq
\int_\tau^\theta \Gamma_{s-}{\bf 1}_{\{ \bar Y_{s-}>0\}} \varphi_s ds,
$
 due to the inequality \eqref{eq_new_inequality}. The term $-\sum_{\tau < s\leq \theta} \Gamma_{s-}(1+ \gamma_s(p_s))\Delta K_s^{d,-}$ is  nonpositive, as $1+ \gamma_s\geq 0$ by Assumption \ref{Royer}.  The term $\sum_{\tau < s\leq \theta} \Gamma_{s-}{\bf 1}_{\{ \bar Y_{s-}>0\}}(1+ \gamma_s(p_s))\Delta \bar A_s^{}$ is nonpositive, due to $1+ \gamma_s\geq 0$, to the Skorokhod condition for $\Delta A_s^{}$ and to $\Delta A'_s\geq 0$ (the details are similar to those for $d\bar C$ in the reasoning above). 
 Since $\bar h$ $\in$ 
 ${\cal M}^{2,\bot}$, by Remark \ref{2i},
 we derive that  
 the expectation of the last term of the above inequality \eqref{eq_product3} is equal to $0$.  %
 Moreover, the term $\int_\tau^\theta \Gamma_{s-}{\bf 1}_{\{ \bar Y_{s-}>0\}} \varphi_s ds$ is nonpositive, as 
 $\varphi_s=  f(Y'_{s}, Z'_s, k'_s)-f'(Y'_{s}, Z'_s, k'_s) \le 0$ $dP\otimes ds$-a.s. by the assumptions of the theorem.
 We conclude that $E[\bar Y_\tau^+]\leq 0$, which implies $\bar Y_\tau^+= 0$ a.s. The proof is thus complete. 
 \end{proof}
 
 \begin{Remark}\label{Rmk_diff_0}
 Note that due to the irregularity of the obstacles, together with the presence of jumps, we cannot adopt the approaches used up to now  in the literature (see e.g. \cite{ElKaroui97}, \cite{CM}, \cite{QuenSul2} and \cite{MG})  to show the comparison theorem for our RBSDE. 
 \end{Remark}
\subsection{Non-linear operator induced by an RBSDE. 
 Snell characterization}\label{subsec_Ref}
We introduce the non-linear operator $\Ref^f$ (associated with a given non-linear driver $f$) and provide some useful  properties. In particular, we show that this non-linear operator coincides  with the $\mathcal{E}^f$-Snell envelope operator (cf. Theorem \ref{Prop_characterization_Snell}). 

\begin{Definition}[Non-linear operator $\Ref^f$]\label{definitionRef}
Let $f$ be a Lipschitz driver. 
For a    process $(\xi_t)$ $\in$  $\mathcal{S}^2$,  we denote by $\Ref^f[\xi]$ the first component of the solution to the Reflected BSDE with (lower) barrier $\xi$ and with Lipschitz driver $f$. 
\end{Definition}
The operator $\Ref^f[\cdot]$ is well-defined due to Theorem \ref{rexiuni}.  Moreover,  $\Ref^f[\cdot]$ is valued in $\mathcal{S}^{2,rusc}$, where
 $ \mathcal{S}^{2,rusc}:= \{\phi\in \mathcal{S}^2: \phi \text{ is r.u.s.c.}\} $ (cf. Remark \ref{Rmk_the_jumps_of_C}).
 In the following proposition we give some  properties of the operator $\Ref^f$. Note that equalities (resp. inequalities) between processes are to be understood in the "up to indistinguishability"-sense. 
 
 We recall the notion of a  strong ${\cal E}^f$-supermartingale. 
\begin{Definition}\label{defmart}
Let $\phi$ be  a process in 
 $ \ \mathcal{S}^2$. Let $f$ be a Lipschitz driver.  The process $\phi$ is said to be a strong $\mathcal{E}^{^f}$-supermartingale (resp. a strong $\mathcal{E}^{^f}$-martingale) , if $\mathcal{E}^{^f}_{_{\sigma ,\tau}}(\phi_{\tau}) \leq \phi_{\sigma}$  a.s. (resp. $\mathcal{E}^{^f}_{_{\sigma ,\tau}}(\phi_{\tau}) = \phi_{\sigma}$  a.s.)\, on $\sigma \leq \tau$,  for all $ \sigma, \tau \in  \stopo$. 
\end{Definition}

Using the above comparison theorem and the ${\cal E}^f$-Mertens decomposition for strong (r.u.s.c.)  ${\cal E}^f$-supermartingales  in the case of a general filtration (cf. Theorem \ref{calmertens}), we show that the operator $\Ref^f$ satisfies the following properties.
\begin{Proposition}[Properties of the operator $\Ref^f$]\label{Properties_Ref}
Let $f$ be a Lipschitz driver satisfying Assumption \ref{Royer}.   
The operator $\Ref^f: S^{2}\rightarrow S^{2,rusc} $, defined in Definition \ref{definitionRef}, has the following properties:
\begin{enumerate}
\item
The operator $\Ref^f$ is nondecreasing,  
that is, for  $\xi, \xi'$ $\in S^{2}$ such that $\xi \leq  \xi'$ 
we have $\Ref^f[\xi]\leq \Ref^f[\xi']$.  
\item
If $\xi$ $\in S^2$ is a  (r.u.s.c.) strong ${\cal E}^f$-supermartingale, then $\Ref^f[\xi]=\xi.$ 
\item
For each $\xi$ $\in S^{2}$,  $\Ref^f[\xi]$ is a strong ${\cal E}^f$-supermartingale and satisfies $\Ref^f[\xi]\geq \xi$.
\end{enumerate}
\end{Proposition}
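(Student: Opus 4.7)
The plan is to dispatch the three properties in the order (1), (3), (2). Property (1) is immediate from the Comparison Theorem \ref{thmcomprbsde}: applied with the \emph{same} driver $f$ on both sides and the obstacles $\xi \leq \xi'$, all hypotheses are trivially satisfied and the conclusion is exactly $\Ref^f[\xi] \leq \Ref^f[\xi']$.

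For property (3), write $Y := \Ref^f[\xi]$ with its associated $(Z,k,h,A,C)$. The pointwise bound $Y \geq \xi$ is part of Definition \ref{def_solution_RBSDE}. For the strong $\mathcal{E}^f$-supermartingale property, I fix $\sigma \leq \tau$ in $\stopo$ and compare on $[\sigma,\tau]$ the process $Y$ (which obeys the RBSDE dynamics driven by $f$ plus the \emph{nondecreasing} ``pressure'' $dA + dC_{-}$) with the solution $\tilde Y$ of the plain BSDE on $[\sigma,\tau]$ having driver $f$ and terminal condition $Y_\tau$. The extra nondecreasing drift tilts $Y$ above $\tilde Y$, so the comparison for BSDEs (Lemma \ref{compref} of the Appendix, itself resting on Assumption \ref{Royer}) yields $Y_\sigma \geq \tilde Y_\sigma = \mathcal{E}^f_{\sigma,\tau}(Y_\tau)$ a.s., as required.

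For property (2), assume $\xi$ is a strong $\mathcal{E}^f$-supermartingale. Theorem \ref{calmertens} tells us $\xi$ is automatically r.u.s.c., so Theorem \ref{caracterisation} is applicable with $\xi$ as the obstacle. Combining the supermartingale inequality $\mathcal{E}^f_{S,\tau}(\xi_\tau) \leq \xi_S$ for all $\tau \in \stops$ (take the essential supremum over $\stops$) with the trivial lower bound obtained by choosing $\tau = S$ gives
$$\xi_S \;=\; \esssup_{\tau \in \stops} \mathcal{E}^f_{S,\tau}(\xi_\tau) \quad \text{a.s.,}\quad S\in\stopo.$$
Theorem \ref{caracterisation} identifies the right-hand side with $\Ref^f[\xi]_S$, yielding $\Ref^f[\xi] = \xi$ up to indistinguishability.

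The main obstacle I anticipate is the comparison step in (3): in our general-filtration and completely-irregular framework it is tempting to ``just quote'' the BSDE comparison, but what is really needed is that the additional right-continuous nondecreasing $A$ and the purely discontinuous $C_{-}$ together act as an admissible increasing perturbation, so that Lemma \ref{compref} applies verbatim; the jump constraint $\theta \geq -1$ from Assumption \ref{Royer} is essential here (exactly as in the proof of Theorem \ref{thmcomprbsde}). Properties (1) and (2) essentially package up already-established results, while (3) is what ultimately justifies viewing $\Ref^f$ as the $\mathcal{E}^f$-analogue of the Snell-envelope operator and sets the stage for Theorem \ref{Prop_characterization_Snell}.
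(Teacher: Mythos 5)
Your argument is correct; parts (1) and (3) follow the paper's own route (part (1) is the comparison theorem verbatim, and part (3) ultimately rests on Lemma \ref{compref}, which is exactly the statement that the RBSDE dynamics with the nondecreasing perturbation $dA + dC_{-}$ produce a strong $\mathcal{E}^f$-supermartingale --- the paper merely routes this through the converse direction of Theorem \ref{calmertens}, and your description of Lemma \ref{compref} as a ``BSDE comparison'' is a slight mislabel of what is in fact a ready-made supermartingale criterion). Part (2) is where you genuinely diverge. The paper applies the $\mathcal{E}^f$-Mertens decomposition (Theorem \ref{calmertens}) directly to the supermartingale $\xi$ to exhibit processes $(Z,k,h,A,C)$ for which $\xi$ itself satisfies the RBSDE equation, and then observes that the Skorokhod conditions \eqref{rRBSDE_A}--\eqref{rRBSDE_C} are trivially satisfied because the candidate solution coincides with the obstacle; uniqueness of the RBSDE solution then gives $\Ref^f[\xi]=\xi$. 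You instead invoke the r.u.s.c. characterization theorem (Theorem \ref{caracterisation}) to identify $\Ref^f[\xi]_S$ with $\esssup_{\tau\in\stops}\mathcal{E}^f_{S,\tau}(\xi_\tau)$ and collapse the essential supremum via the supermartingale inequality, concluding indistinguishability from equality at every stopping time. Both arguments are legitimate and lean on the same preliminary r.u.s.c. machinery (indeed your computation is precisely the one used inside the paper's proof of Theorem \ref{calmertens} itself); the paper's version has the small advantage of displaying explicitly that $\xi$ solves the reflected equation with degenerate reflection, whereas yours is shorter and avoids having to mention the Skorokhod conditions at all.
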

\begin{proof}
The first assertion follows from our comparison theorem for reflected BSDEs with irregular obstacles (Theorem \ref{thmcomprbsde}).\\
Let us prove the second assertion.
 Let $\xi$ be a (r.u.s.c.) strong ${\cal E}^f$-supermartingale in $ \mathcal{S}^2$. By definition of $\Ref^f$, we have to show that $\xi$ is the solution of the reflected  BSDE associated with driver $f$ and obstacle $\xi$. 
By the ${\cal E}^f$-Mertens decomposition for strong (r.u.s.c.)  ${\cal E}^f$-supermartingales in the case of a general filtration (Theorem \ref{calmertens}), together with Lemma \ref{theoreme representation}, there exists 
$(Z,k, h,A, C)\in \H^2 \times \H^2_{\nu}\times {\cal M}^{2,\bot} \times {\cal S}^2\times {\cal S}^2$ such that \\
$$-d  \xi_t =  f(t,\xi_{t}, Z_{t},k_t)dt -  Z_t  dW_t - \int_{{\bf E}}  k_t(e) \tilde{N}(dt,de)-dh_t +dA_t  +dC_{t-},\quad 0\leq t\leq T,$$
where  $A$ is predictable right-continuous nondecreasing with $A_0=0$, and  $C$ is adapted right-continuous  nondecreasing and purely discontinuous, with $C_{0-}=0$. Moreover, the Skorokhod conditions (for RBSDEs) are here trivially satisfied.  
Hence, $\xi=\Ref^f[\xi]$,  
which is the desired conclusion. \\
It remains to show the third assertion. By definition, the process $\Ref^f[\xi]$ is equal to $Y$, where $(Y,Z,k,h,A,C)$ is the solution  our reflected BSDE. Hence, $\Ref^f[\xi]=Y$ admits the decomposition \eqref{cmertens}, which, by Theorem \ref{calmertens}, implies that $\Ref^f[\xi]=Y$ is a strong ${\cal E}^f$-supermartingale. Moreover, by definition, $\Ref^f[\xi]=Y$ is greater than or equal to the obstacle $\xi$.
\end{proof}


With the help of the  above proposition, we show that the process $\Ref^f[\xi] $, that is, the first component of the solution of the RBSDE with (irregular) obstacle $\xi$, is characterized  in terms of the smallest strong $\mathcal{E}^f$-supermartingale greater than or equal to $\xi$.  
\begin{Theorem}[The operator $\Ref^f$ and the ${\cal E}^f$- Snell envelope operator] \label{Prop_characterization_Snell}
 Let $\xi$ be a   
process in ${\cal S}^2$
 and let $f$ be a  Lipschitz driver satisfying   Assumption \ref{Royer}. 
 The first component $Y=\Ref^f[\xi]$ of the solution to the reflected BSDE with parameters $(\xi,f)$  coincides with the ${\cal E}^f$-{\em Snell envelope} of $\xi$, 
that is, \emph{the smallest strong 
${\cal E}^f$-supermartingale greater than or equal to $\xi$.}
\end{Theorem}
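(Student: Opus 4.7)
The plan is to deduce the theorem as a direct corollary of Proposition \ref{Properties_Ref}, combined with the r.u.s.c.\ regularity of strong $\mathcal{E}^f$-supermartingales established in Theorem \ref{calmertens}. Two things need to be shown: first, that $\Ref^f[\xi]$ is a strong $\mathcal{E}^f$-supermartingale dominating $\xi$; second, that it is the \emph{smallest} such process.

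The first assertion is precisely property 3 of Proposition \ref{Properties_Ref}, so there is nothing further to prove. For the minimality, I would fix an arbitrary strong $\mathcal{E}^f$-supermartingale $Y' \in \mathcal{S}^2$ satisfying $Y'_t \geq \xi_t$ for all $t \in [0,T]$ a.s., and argue that $Y' \geq \Ref^f[\xi]$ (up to indistinguishability) in two steps. First, by the monotonicity of $\Ref^f$ (property 1 of Proposition \ref{Properties_Ref}), the inequality $\xi \leq Y'$ transfers to $\Ref^f[\xi] \leq \Ref^f[Y']$. Second, I would invoke Theorem \ref{calmertens}, which tells us that any strong $\mathcal{E}^f$-supermartingale is automatically r.u.s.c.; hence $Y'$ lies in $\mathcal{S}^{2,\mathrm{rusc}}$ and satisfies the hypothesis of property 2 of Proposition \ref{Properties_Ref}, giving $\Ref^f[Y'] = Y'$. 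Chaining the two relations yields $\Ref^f[\xi] \leq \Ref^f[Y'] = Y'$, as required.

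I do not anticipate any real obstacle here: the machinery has been carefully prepared in the previous sections so that the Snell characterization drops out essentially by algebraic manipulation of the three properties of $\Ref^f$. The one subtle point worth emphasizing is that one does \emph{not} need to assume a priori that $Y'$ is r.u.s.c.; this regularity comes for free from Theorem \ref{calmertens}, and it is exactly what allows the fixed-point identity $\Ref^f[Y'] = Y'$ to be applied. Without this automatic r.u.s.c.\ property, property 2 of Proposition \ref{Properties_Ref} would not be directly usable for an arbitrary dominating strong $\mathcal{E}^f$-supermartingale, and the argument would require an extra regularization step.
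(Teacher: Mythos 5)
Your proof is correct and follows essentially the same route as the paper: property 3 of Proposition \ref{Properties_Ref} for the supermartingale/domination part, then monotonicity (property 1) combined with the fixed-point identity (property 2) applied to an arbitrary dominating strong $\mathcal{E}^f$-supermartingale $Y'$. Your observation that the r.u.s.c.\ regularity of $Y'$ is automatic by Theorem \ref{calmertens} (so that property 2 applies without extra hypotheses) is exactly the point implicit in the paper's argument.
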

\begin{proof}
By the third assertion of  Proposition \ref{Properties_Ref}, the process $Y=\Ref^f[\xi]$ is a strong 
${\cal E}^f$-supermartingale satisfying  $Y\geq \xi$. It remains to show the minimality property. 
Let $Y'$ be a strong ${\cal E}^f$-supermartingale such that  $Y'\geq \xi$. We have $\Ref^f[Y']\geq \Ref^f[\xi]$, due to the nondecreasingness of the operator $\Ref^f$ (cf. Proposition \ref{Properties_Ref}, 1st assertion). On the other hand, 
$\Ref^f[Y']=Y'$ (due to Proposition \ref{Properties_Ref}, 2nd assertion) and $\Ref^f[\xi]=Y$. 
 Hence, $Y'\geq Y$, which is the desired conclusion. 
%
\end{proof}
In the case of a right-continuous left-limited  obstacle $\xi$ the above  characterization has been established in  \cite{QuenSul2}; it has been generalized to the case of a right-upper-semicontinuous obstacle  in \cite[Prop. 4.4]{MG}. Let us note however that the arguments of the proofs given in \cite{QuenSul2} and in \cite{MG} cannot be adapted to our general framework. 

\section{Infinitesimal characterization of the value process in terms of an RBSDE in the completely irregular case}\label{sec_infinitesimal}
The following theorem is a direct consequence of Theorem \ref{Prop_characterization_Snell}  and Theorem  \ref{Prop_agrege}. It gives "an infinitesimal characterization" of the value process $(V_t)_{t\in[0,T]}$ of the non-linear problem \eqref{vvv}. 

\begin{Theorem}[Characterization in terms of an RBSDE]\label{caranonlinear} Let $(\xi_t)_{t\in[0,T]}$ be a 
process in ${\cal S}^2$
 and let $f$ be a  Lipschitz driver satisfying   Assumption \ref{Royer}. The value process $(V_t)_{t\in[0,T]}$ aggregating the  family $V=(V(S), \; S\in\stopo)$ defined by \eqref{vvv} coincides (up to indistinguishability) with the first component $(Y_t)_{t\in[0,T]}$  of the solution of our RBSDE with driver $f$ and obstacle $\xi$. In other words, we have, for all $S\in\stopo$, 
\begin{equation}\label{novel}
 Y_S=V_S  = {\rm ess} \sup_{\tau \in \T_{S,T}}{\cal E}^f_{S, \tau}(\xi_{\tau}) \text{ a.s. }
 \end{equation}
\end{Theorem}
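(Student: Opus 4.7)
The plan is to combine the two characterizations established in the preceding sections, namely the direct characterization of $V$ as an $\mathcal{E}^f$-Snell envelope (Theorem \ref{Prop_agrege}) with the RBSDE-side characterization of $\Ref^f[\xi]$ as the same $\mathcal{E}^f$-Snell envelope (Theorem \ref{Prop_characterization_Snell}). Since both the value process $V$ of the non-linear optimal stopping problem \eqref{vvv} and the first component $Y$ of the solution of the RBSDE with parameters $(f,\xi)$ are identified, independently, with the \emph{smallest} strong $\mathcal{E}^f$-supermartingale dominating $\xi$, uniqueness of that minimal object forces $V$ and $Y$ to be indistinguishable.

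More concretely, I would proceed as follows. First, invoke Theorem \ref{Prop_agrege} to obtain the unique right-uppersemicontinuous optional process $(V_t)_{t\in[0,T]}$ that aggregates the family $V=(V(S), S\in\stopo)$, together with the fact that $(V_t)$ is the $\mathcal{E}^f$-Snell envelope of $\xi$. Second, apply Theorem \ref{Prop_characterization_Snell} to the first component $Y=\Ref^f[\xi]$ of the solution to the RBSDE with parameters $(f,\xi)$ (whose existence and uniqueness is guaranteed by Theorem \ref{rexiuni}) to conclude that $Y$ is also the $\mathcal{E}^f$-Snell envelope of $\xi$. The uniqueness of the minimal element in the family of strong $\mathcal{E}^f$-supermartingales dominating $\xi$ (which follows immediately from the partial order: if $Y^1$ and $Y^2$ are both minimal, then $Y^1\le Y^2$ and $Y^2\le Y^1$ up to indistinguishability) then yields $Y_t = V_t$ for all $t\in[0,T]$, a.s. Evaluating at any $S\in\stopo$ and using the aggregation identity $V_S=V(S)$ a.s.\ gives the displayed equality \eqref{novel}.

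There is no substantive obstacle here: all the hard work has already been done upstream. The aggregation step (Theorem \ref{Prop_agrege}) required the optional section theorem of Dellacherie--Lenglart applied to the right-uppersemicontinuous $\stopo$-system $V$, while the RBSDE-side identification (Theorem \ref{Prop_characterization_Snell}) rested on the properties of the operator $\Ref^f$ from Proposition \ref{Properties_Ref}, which in turn relied on the comparison theorem for irregular RBSDEs (Theorem \ref{thmcomprbsde}) and on the $\mathcal{E}^f$-Mertens decomposition (Theorem \ref{calmertens}). Given these, the present statement is a one-line synthesis, and the only point to state carefully is that the equality $Y=V$ holds up to indistinguishability (as opposed to merely pointwise a.s.\ at each stopping time), which is automatic since both processes are optional and right-uppersemicontinuous.
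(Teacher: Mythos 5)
Your proposal is correct and follows exactly the paper's route: the paper states that Theorem \ref{caranonlinear} is a direct consequence of Theorem \ref{Prop_agrege} (which identifies $V$ with the $\mathcal{E}^f$-Snell envelope of $\xi$) and Theorem \ref{Prop_characterization_Snell} (which identifies $Y=\Ref^f[\xi]$ with the same Snell envelope), so the two processes coincide by uniqueness of the minimal dominating strong $\mathcal{E}^f$-supermartingale. Your additional remark on indistinguishability is a sound (and welcome) precision.
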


By using this theorem, we derive the following corollary, which generalizes some results of classical optimal stopping theory (more precisely, the assertions (ii) and (iii) from Lemma 
\ref{pro}) to the case of an optimal stopping problem with (non-linear) $f$-expectation.

\begin{Remark} \label{difficulty} 
 Let us summarize our \emph{two-part} approach to the non-linear optimal stopping problem \eqref{vvv} in the case where  $\xi$ is \emph{completely irregular}:  First, we have applied \emph{a direct approach} to the   problem \eqref{vvv},  which consists in  showing that the value family $(V(S))_{S\in\stopo}$ can be aggregated by an optional process $(V_t)_{t\in[0,T]}$  and, then, in characterizing $(V_t)$ as the $\mathcal{E}^f$-Snell envelope of the (completely irregular) pay-off process $(\xi_t)$. On the other hand, we have applied \emph{an RBSDE-approach} which consists in establishing some results on RBSDEs with completely irregular obstacles (in particular, existence, uniqueness, and   a comparison result) and some useful properties of the operator $\Ref^f$, 
\footnote{
We emphasize that the proof of these properties (cf. Proposition \ref{Properties_Ref})  relies heavily on the ${\cal E}^f$-Mertens decomposition for strong ${\cal E}^f$-supermartingales 
(cf. Theorem \ref{calmertens}), which is obtained as a direct consequence of  the preliminary result (Theorem \ref{caracterisation}) established in the r.u.s.c. case.}
and then in using these properties to show that the unique  solution $(Y_t)$ of the RBSDE is equal to  the $\mathcal{E}^f$-Snell envelope of the completely irregular  obstacle. We have then deduced from those two parts (the direct part and the RBSDE-part) that $(Y_t)$ and $(V_t)$ coincide, which gives  an infinitesimal characterization for the value process $(V_t)$. 
%
 
 %

\end{Remark}

Finally, let us put together  some of the results for the non-linear optimal stopping problem \eqref{vvv}:  
\begin{compactenum}[i)]
\item 
$\bullet$ For any  reward process  $\xi\in\mathcal{S}^2$, we have the infinitesimal characterization\\ $V_t=Y_t=\Ref^f_t[\xi]$, for all $t$, a.s. (Theorem \ref{caranonlinear}).\\
 $\bullet$ Also,  $(V_t)_{t\in[0,T]}$ is the  ${\cal E}^f$-{\em Snell envelope} of the pay-off process $\xi$ (Theorem \ref{Prop_agrege}). 

\item If, moreover, $\xi$ is right-u.s.c.\,, then, for any $S\in\stopo$, for any $\varepsilon>0$, there exists an $L\varepsilon$- optimal stopping time for the problem at time $S$.
 (Theorem \ref{caracterisation}). 
\item If, moreover, $\xi$ is also left-u.s.c. along stopping times, then, for any $S\in\stopo$, there exists an  optimal stopping time for the problem at time $S$   
(Theorem \ref{Thm_existence_optimal}).
%
\end{compactenum}  
 

\section{Applications of Theorem \ref{caranonlinear}}\label{sec_app}
\subsection{Application to American options with a completely irregular payoff}\label{amerique}
In the following example, we set $E:= \R$, $\nu(de) := \lambda \delta_1(de)$, where $\lambda$ is a positive constant, and where $\delta_1$ denotes the Dirac measure at $1$. 
The process $N_t := N([0,t] \times \{1\})$ is then a Poisson process with parameter $\lambda$, and we have
$\tN_t := \tN([0,t] \times \{1\})  = N_t - \lambda t.$ 

We  assume that the filtration is the natural filtration associated with $W$ and $N$.

 We consider a financial market which consists of one risk-free asset, whose price process $S^0$ satisfies $dS_t^{0}=S_t^{0} r_tdt$, and two risky assets with price processes $S^{1},S^{2}$ satisfying:
 $$dS_t^{1}=S_{t^-}^{1}[\mu_t^1dt +  \sigma^1_t dW_t+ \beta_t^1d\tilde N_t];\quad dS_t^{2}=S_{t^-}^{2} [\mu^2_tdt+\sigma^2_tdW_t + \beta_t^2d\tilde N_t].$$
We suppose that the processes $\sigma^1,\sigma^2,$ $\beta^1,\beta^2,$ $r, \mu^1,\mu^2$ are 
predictable and bounded, with $\beta_t^i>-1$ for $i=1,2$. 
Let $\mu_t:= (\mu^1,\mu^2)'$ and let $\Sigma_t:= (\sigma_t, \beta_t)$ be the $2\times 2$-matrix with first column 
$\sigma_t:=(\sigma_t^1,\sigma_t^2)'$ and second column  $\beta_t:=(\beta_t^1,\beta_t^2)'$. 
We suppose that $\Sigma_t$ is invertible and that the coefficients of $\Sigma_t^{-1}$ are bounded.

We  consider an agent who can invest his/her initial wealth $x\in\R$ in the three assets. 

For $i=1,2$, we denote by $\varphi_t^i$ the amount invested in the $i^{\textit{th}}$ risky asset. 
A process $\varphi= (\varphi^1, \varphi^2)'$ belonging to ${\mathbb H}^2 \times  {\mathbb H}^2_{\nu}$ will be called a \emph{portfolio strategy}.

The value of the associated portfolio (or {\em wealth}) at time $t$ is denoted  by $X^{x, \varphi}_t$ (or simply by $X_t$). 
In the case of a perfect market, we have
\begin{align*}\label{portfolio}
dX_t & = (r_t X_t+\varphi_t^1 (\mu^1_t - r_t)+\varphi_t^2(\mu^2_t - r_t) ) dt +
(\varphi_t^1 \sigma^1_t + \varphi_t^2 \sigma^2_t) dW_t +
(\varphi_t^1 \beta^1_t + \varphi_t^2 \beta^2_t) d\tilde N_t \\
& = (r_t X_t+\varphi_t' (\mu_t - r_t{\bf 1}) ) dt+ \varphi_t' \sigma_t dW_t + 
\varphi_t' \beta_t  d\tilde N_t,
\end{align*}
where ${\bf 1}=(1,1)'$.
%
%
%
More generally, we will suppose that there may be some imperfections in the market,  taken into account via 
the {\em nonlinearity} of the
dynamics of the wealth and encoded in a Lipschitz driver $f$ satisfying Assumption~\ref{Royer} (cf. \cite{EQ96} or \cite{DQS4} for some examples). More precisely, 
we suppose that  the {\em wealth} process  $X^{x, \varphi}_t$ (also $X_t$)
satisfies  the forward differential equation: 
\begin{equation}\label{riche}
-dX_t= f(t,X_t, {\varphi_t}' \sigma_t, {\varphi_t}' \beta_t) dt - {\varphi_t}' \sigma_t dW_t-{\varphi_t}' \beta_t d\tilde N_t, \;  ; \;  X_0=x,
\end{equation}
 or,
equivalently, setting $Z_t= {\varphi_t}' { \sigma}_t$ and
  $k_t= {\varphi_t}' \beta_t $,
 \begin{equation}\label{wea}
-dX_t= f(t,X_t, Z_t,k_t ) dt -  Z_t dW_t- k_t d\tilde N_t ; \;  X_0=x.
\end{equation}
Note that $(Z_t, k_t)= {\varphi_t}' { \Sigma}_t$, which  is equivalent to ${\varphi_t}' = (Z_t, k_t)\, { \Sigma}_t^{-1}$.\\
This model includes the case of a perfect market, for which $f$ is a linear driver given by
$
f(t,y,z,k)= -r_t y- (z, k)\, { \Sigma}_t^{-1} (\mu_t - r_t{\bf 1}).
$

\begin{Remark}\label{richessemartingale}
Note that the wealth process $X^{x, \varphi}$ is an $\mathcal{E}^f$-martingale, since
$X^{x, \varphi}$ is the solution of the 
BSDE with driver $f$, terminal time $T$ and terminal condition $X_T^{x, \varphi}$. 
\end{Remark}

Let us consider an American option associated with terminal time $T$ 
and  payoff given by a process $(\xi_t)$ $\in {\cal S}^2$.
As is usual in the literature, the option's {\em superhedging price} at time $0$, denoted by $u_0$, is defined as the minimal initial wealth  enabling the seller  to invest in a portfolio whose value is greater than or equal to the payoff of the option at all times. 
More precisely, for each initial wealth $x$, we denote by ${\cal A} (x)$ the set of all portfolio strategies $ \varphi$ $\in$  ${\mathbb H}^2\times {\mathbb H}^2_{\nu}$ such that 
$X^{x, \varphi}_{ t} \geq \xi_t$,  for all $t \in [0,T]$ a.s. 
The {\em superhedging price}
of the American option is thus defined by
\begin{equation}\label{defsup}
u_0:= \inf \{x \in \R,\,\, \exists  \varphi \in {\cal A} (x) \}. \footnote{As shown in  assertion (iii) of Proposition \ref{americano}, the infimum in \eqref{defsup} is always attained. 
}
\end{equation}
Using the infinitesimal characterization of the value function \eqref{vvv} (cf. Theorem \ref{caranonlinear}), we show the following  characterizations of the superhedging price $u_0$, as well as the existence of a superhedging strategy.
\begin{Proposition}\label{americano} Let $(\xi_t)$ be an optional process such that $E[\esssup_{\tau\in\T_0} |\xi_\tau |^2] <  \infty. $  \\
(i) The {\em superhedging price} $u_0$ of the American option with payoff $(\xi_t)$ is equal to the value function $V(0)$ of our optimal stopping problem \eqref{eq_intro} at time $0$, that is
\begin{equation}\label{optimalstopping}
u_0=
\sup_{\tau \in {\stopo}}{\cal E}_{0,\tau}^f ( \xi_{\tau}).
\end{equation}
 (ii) We have $u_0=Y_0$, where $(Y,Z,k,h,A,C)$
 is the solution of the reflected BSDE \eqref{RBSDE} (with $h=0$).\\
 (iii) The  portfolio strategy $\hat{\varphi}$, defined by  $\hat{\varphi}_t{\, '} = (Z_t, k_t)\, { \Sigma}_t^{-1}$, is a superhedging strategy, 
that is, belongs to $ {\cal A} (u_0)$.
\end{Proposition}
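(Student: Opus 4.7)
The plan is to prove the three assertions jointly: (i) by a two-sided inequality, (ii) as an immediate consequence of Theorem \ref{caranonlinear} (which identifies $V(0)$ with $Y_0$), and (iii) by exhibiting $\hat\varphi$ as a concrete superhedging strategy whose existence simultaneously furnishes the ``$\leq$''-direction of (i). Throughout I would use that under the natural Brownian--Poisson filtration one has $\mathcal{M}^{2,\bot}=\{0\}$, so that the orthogonal component $h$ in the RBSDE of Definition \ref{def_solution_RBSDE} vanishes, which is why the statement puts $h=0$.

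The easy direction is $u_0 \geq V(0)$. I would take any $x \in \R$ with $\mathcal{A}(x)\neq\emptyset$ and any $\varphi \in \mathcal{A}(x)$. By Remark \ref{richessemartingale} the wealth $X^{x,\varphi}$ is an $\mathcal{E}^f$-martingale, so $\mathcal{E}^f_{0,\tau}(X^{x,\varphi}_\tau)=x$ for every $\tau \in \stopo$. Since $X^{x,\varphi}_\tau\geq \xi_\tau$ and Assumption \ref{Royer} guarantees monotonicity of the conditional $f$-expectation (cf. Section \ref{sec5}), we have $\mathcal{E}^f_{0,\tau}(\xi_\tau)\leq x$. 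Taking the essential supremum over $\tau$ and then the infimum over admissible $x$ yields $u_0 \geq V(0)$.

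The core step is the construction of the superhedging strategy. Let $(Y,Z,k,A,C)$ be the unique RBSDE solution from Theorem \ref{rexiuni} (with $h=0$), and set $\hat\varphi'_t := (Z_t,k_t)\Sigma_t^{-1}$; boundedness of the entries of $\Sigma^{-1}$ yields $\hat\varphi \in \H^2 \times \H^2_\nu$. Let $X_t := X^{Y_0,\hat\varphi}_t$ be the associated wealth, given by the forward SDE \eqref{wea} with initial value $Y_0$. Writing both $X$ and $Y$ in integral form and setting $D_t := X_t - Y_t$, a standard linearisation of $f$ in its $y$-argument produces a bounded predictable coefficient $\delta$ ($|\delta|\leq K$) such that
\begin{equation*}
D_t = -\int_0^t \delta_s D_s\,ds + A_t + C_{t-},\qquad D_0 = 0.
\end{equation*}
The integrating factor $\Gamma_t := \exp(\int_0^t \delta_s\,ds)$ is continuous and positive, so a pathwise Stieltjes manipulation yields
\begin{equation*}
\Gamma_t D_t = \int_0^t \Gamma_s\,d(A_s+C_{s-}) \geq 0,
\end{equation*}
because $A$ is non-decreasing right-continuous and $(C_{t-})$ is non-decreasing left-continuous. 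Hence $X_t\geq Y_t\geq\xi_t$ for all $t\in[0,T]$ a.s., so $\hat\varphi\in\mathcal{A}(Y_0)$. This proves (iii) and also shows $u_0\leq Y_0$.

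Assembling the two bounds with Theorem \ref{caranonlinear} (which gives $V(0)=Y_0$) produces $u_0=Y_0=V(0)$, simultaneously establishing (i) and (ii) and showing that the infimum in \eqref{defsup} is attained by $\hat\varphi$. The only mildly delicate point I anticipate is the sign analysis of $D_t$: since $A_t+C_{t-}$ is ladlag rather than right-continuous, the integrating-factor step must be read pathwise as a Riemann--Stieltjes integral against a non-decreasing function of bounded variation; the continuity of $\Gamma$ makes this painless, and the sign of $D_t$ then follows at once from the positivity of both the kernel $\Gamma$ and the driving measure $d(A+C_{-})$.
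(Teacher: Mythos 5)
Your proof is correct and follows essentially the same route as the paper: the inequality $u_0\geq V(0)$ via the $\mathcal{E}^f$-martingale property of the wealth and monotonicity of $\mathcal{E}^f$, and the inequality $u_0\leq Y_0$ by showing $\hat\varphi\in\mathcal{A}(Y_0)$ through a comparison of the forward dynamics of $X^{Y_0,\hat\varphi}$ and $Y$, combined with Theorem \ref{caranonlinear} to identify $Y_0$ with $V(0)$. The only difference is that where the paper simply invokes ``the comparison result for forward differential equations'', you spell it out via the linearisation in $y$ and the continuous integrating factor $\Gamma$, which is a valid pathwise Stieltjes argument since the difference $D=X-Y$ has no martingale part.
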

%
In the case of a perfect market (for which $f$ is  linear) and a regular pay-off, the above result  reduces to a well-known result from the literature (cf. \cite{H}). 
Even in the case  of a perfect market, our result for a completely irregular pay-off is new.   

\begin{proof} The proof 
relies on Theorem \ref{caranonlinear} and similar arguments to those in  \cite{DQS4} (in the case of game options with RCLL payoffs and default). 
Note first that, by Theorem \ref{caranonlinear}, we have $\sup_{\tau \in \stopo} \mathcal{E}_{0, \tau }^f (\xi_\tau)= Y_0$. In order to prove the three first assertions of the above  theorem, 
it is thus sufficient to show that $u_0 =Y_0$ and $\hat{\varphi} \in {\cal A} (Y_0)$.

We first show that $\hat{\varphi}$ $\in$ $ {\cal A} 
(Y_0)$. By \eqref{wea}, the value $X^{Y_0, \hat{\varphi}}$ of the portfolio associated with initial wealth ${Y_0}$ and strategy 
$\hat{\varphi}$ satisfies:

$
dX_t^{Y_0, \hat{\varphi}}= - 
f(t,X_t^{Y_0, \hat{\varphi}},Z_t,k_t)dt + dM_t,
$ with initial condition $X_0^{Y_0, \hat{\varphi}}=Y_0$,
 where $ M_t := \int_0^t Z_s dW_s +\int_0^t k_s d\tilde N_s$. 
Moreover, since $Y$ is the solution of the reflected BSDE \eqref{RBSDE} (with $h=0$), we have
$
dY_t=- f(t,Y_t,Z_t,k_t)dt+dM_t -dA_t-dC_{t-}$.
%
Applying the comparison result for forward differential equations, we derive that
$X_t^{Y_0, \hat{\varphi}} \geq Y_t $, for all $t \in [0,T]$ a.s. Since 
$Y_t \geq \xi_t$, we thus get  $X_t^{Y_0, \hat{\varphi}} \geq 
\xi_t $ for all $t \in [0,T]$ a.s.\,It follows that $\hat{\varphi} \in {\cal A} 
(Y_0)$. 

We now show that $Y_0=u_0$. Since $\hat{\varphi} \in {\cal A} 
(Y_0)$, by definition of $u_0$ (cf. \eqref{defsup}), we derive that $Y_0 \geq u_0$. Let us now show that $u_0 \geq Y_0$.
%
%
%
Let $x \in \R$ be such that there exists a strategy $\varphi \in {\cal A} (x)$. We show that $x \geq Y_0$.
 Since $\varphi \in {\cal A} (x)$, we have
$X^{x, \varphi}_{t} \geq \xi_t$, for all $t \in [0,T]$ a.s. 
For each $\tau \in \mathcal{T}$ we thus get the inequality
$X^{x, \varphi}_{\tau } \geq \xi_\tau $ a.s.\,
By the non decreasing property 
of $\mathcal{E}^f$ together with the $\mathcal{E}^f$-martingale property of $X^{x, \varphi}$ (cf. Remark \ref{richessemartingale}),
we thus get
$
x =\mathcal{E}^f _{0,\tau }(X^{x, \varphi}_{\tau})\geq 
\mathcal{E}_{0, \tau }^f (\xi_\tau).
$
By taking the supremum over $\tau \in \stopo$, we derive that
$
x \geq 
\sup_{\tau \in \stopo} \mathcal{E}_{0, \tau }^f (\xi_\tau)=Y_0
$, where the equality holds by Theorem \ref{caranonlinear}. 
By definition of $u_0$ as an infimum (cf \eqref{defsup}), we get $u_0 \geq Y_0$, which, since $Y_0 \geq u_0$, yields that
$u_0 = Y_0$. 
%
\end{proof}
We now give some examples of American options with completely irregular pay-off. 
\begin{Example} 
We consider a pay-off process $(\xi_t)$ of the form $\xi_t:=h(S^1_t)$, for $t\in[0,T]$, where $h:\R\rightarrow \R$ is a (possibly irregular)   Borel function such that the process $(h(S_t))$ is optional and   $(h(S_t^1))\in\mathcal{S}^2$.    
In general, the pay-off $(\xi_t)$ is a completely irregular process. By the first two statements of  Proposition \ref{americano}, the superhedging price of the American option is equal to the value function of the optimal stopping problem \eqref{optimalstopping}, and is also
characterized as the solution of the reflected BSDE 
\eqref{RBSDE} with obstacle $\xi_t=h(S^1_t)$.

If $h$ is an uppersemicontinuous function on $\R$,  then the process  $(h(S_t^1))$ is optional, since an u.s.c. function can be written as the limit of a (non increasing) sequence of continuous functions. Moreover, 
 the process  $(h(S_t^1))$  is right-u.s.c.  and also left-u.s.c. along stopping times. The right-uppersemicontinuity of $(\xi_t)$ follows from the fact that the process $S^1$ is right-continuous; the left-uppersemicontinuity along stopping times of $(\xi_t)$ follows from the  fact that $S^1$  jumps only at totally inaccessible stopping times.           
In virtue of Proposition \ref{americano}, last statement, there exists  in this case  an {\em optimal exercise time} for the American option with payoff $\xi_t=h(S^1_t)$. 
A particular  example is given by the American digital call  option (with strike $K>0$), where $h(x):= {\bf 1}_{[K, + \infty[}(x)$. The function  $h$ is  u.s.c. on $\R$. The corresponding payoff process $\xi_t:={\bf 1}_{ S^1_t \geq K}$ is thus r.u.s.c and left-u.s.c. along stopping times in this case, which implies the existence of an optimal exercise time.


 In the case of the American  digital put option (with strike $K>0$), the corresponding payoff $\xi_t:={\bf 1}_{ S^1_t < K}$ is not r.u.s.c. 
We note that the pay-off of the American digital call and put options is in general neither left-limited nor right-limited. 

\end{Example}

\subsection{An application to RBSDEs}

The 
characterization (Theorem \ref{caranonlinear}) is also useful in the theory of RBSDEs in itself: it
 allows us to obtain  a priori estimates with universal constants for RBSDEs with completely irregular obstacles.
 
\begin{Proposition}[A priori estimates with universal constants] \label{oubli} 
Let $\xi$ and $ \xi'$ be two 
 processes in $ \mathcal{S}^2$. 
Let $f$ and $ f'$ be two Lipschitz drivers satisfying Assumption \ref{Royer} with common Lipschitz constant $K>0$. Let $(Y,Z,k)$ (resp. $(Y',Z',k')$)  be the three first components of the solution of the reflected BSDE associated with driver $f$ (resp. $f'$)  and obstacle $\xi$ (resp. $\xi'$). \\
Let $\overline{Y}:=Y-Y'$, $\overline{\xi}:=\xi-\xi'$, and
$\delta f_s:= f'(s,Y_s', Z_s', k_s')-f(s,Y_s', Z_s', k_s')$. \\
Let $\eta, \beta >0$ with $\beta\geq \dfrac{3}{\eta}+2K$ and $\eta \leq \dfrac{1}{K^2}$. 
For each $S \in  {\cal T}_{0,T}$, we have
\begin{equation}\label{eqA.1}
 \overline{Y_S}^2  \leq e^{\beta (T-S)}E[{\rm ess} \sup_{\tau \in \T_{S,T}} \overline{\xi_\tau}^2| \mathcal{F}_S]+ 
 \eta E[\int_S^Te^{\beta (s-S)}(\delta f_s)^2 ds|\mathcal{F}_S] \quad {\rm a.s.}\,
\end{equation}
\end{Proposition}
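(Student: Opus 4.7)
The plan is to combine the non-linear characterization of the value process (Theorem \ref{caranonlinear}) with the well-known \emph{pointwise} a priori estimate for non-reflected BSDEs. First, by Theorem \ref{caranonlinear}, we rewrite
\[
Y_S = \esssup_{\tau\in \T_{S,T}}\mathcal{E}^f_{S,\tau}(\xi_\tau),\qquad Y'_S = \esssup_{\tau\in \T_{S,T}}\mathcal{E}^{f'}_{S,\tau}(\xi'_\tau) \quad \text{a.s.}
\]
The standard ``Lipschitz'' inequality for essential suprema, $|\esssup_\tau A_\tau - \esssup_\tau B_\tau| \leq \esssup_\tau |A_\tau - B_\tau|$, after squaring gives
\[
\overline{Y_S}^2 \;\leq\; \esssup_{\tau\in\T_{S,T}}\bigl(\mathcal{E}^f_{S,\tau}(\xi_\tau) - \mathcal{E}^{f'}_{S,\tau}(\xi'_\tau)\bigr)^2 \quad \text{a.s.}
\]

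The second step is to apply the classical pointwise a priori estimate for BSDEs with Lipschitz drivers (see, e.g., \cite{QuenSul}, whose proof carries over to our general-filtration setting thanks to the orthogonal decomposition of Lemma \ref{theoreme representation}): for every fixed $\tau\in\T_{S,T}$, under the hypotheses $\beta\geq 3/\eta + 2K$ and $\eta\leq 1/K^2$,
\[
\bigl(\mathcal{E}^f_{S,\tau}(\xi_\tau) - \mathcal{E}^{f'}_{S,\tau}(\xi'_\tau)\bigr)^2
\;\leq\; e^{\beta(\tau-S)}E\bigl[\overline{\xi_\tau}^2\big|\mathcal{F}_S\bigr]
+ \eta \,E\Bigl[\int_S^\tau e^{\beta(s-S)}(\delta f_s)^2\,ds\,\Big|\,\mathcal{F}_S\Bigr] \quad \text{a.s.}
\]

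Third, I will bound the right-hand side by a $\tau$-independent $\mathcal{F}_S$-measurable majorant. Since $\tau\leq T$ a.s.\ and $(\delta f_s)^2 e^{\beta(s-S)}\geq 0$, the second term is dominated by the analogous integral up to $T$. For the first term, the pointwise domination $\overline{\xi_\tau}^2\leq \esssup_{\sigma\in\T_{S,T}}\overline{\xi_\sigma}^2$ a.s., combined with monotonicity of the conditional expectation, gives the $\tau$-free upper bound $e^{\beta(T-S)}\,E\bigl[\esssup_{\sigma\in\T_{S,T}}\overline{\xi_\sigma}^2\,\big|\,\mathcal{F}_S\bigr]$. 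Taking the essential supremum over $\tau\in\T_{S,T}$ on the left-hand side then yields \eqref{eqA.1}.

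The only potentially subtle point is the interchange of $\esssup_\tau$ with $E[\cdot|\mathcal{F}_S]$ in the first term, but it is entirely sidestepped by the crude pointwise domination used above, so no measurable selection argument is required. Integrability of the resulting majorant is ensured by the standing assumption $\xi,\xi'\in\mathcal{S}^2$, which makes $\esssup_{\sigma\in\T_{S,T}}\overline{\xi_\sigma}^2$ a member of $L^1$.
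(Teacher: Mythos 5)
Your overall strategy (reduce to the family of plain BSDEs $X^{\tau}_S=\mathcal{E}^f_{S,\tau}(\xi_\tau)$, apply the pointwise a priori estimate for each $\tau$, then take the essential supremum) is the same as the paper's Step 1, and the $\esssup$-Lipschitz reduction and the $\tau$-free majorization at the end are fine. However, there is a genuine gap in your second step: the classical a priori estimate for the difference of two BSDEs with drivers $f,f'$ and terminal data $\xi_\tau,\xi'_\tau$ produces the driver discrepancy evaluated along the solution of the \emph{auxiliary, $\tau$-dependent} BSDE, i.e.\ the term
\begin{equation*}
\eta\,E\Bigl[\int_S^{T} e^{\beta(s-S)}\bigl[(f-f')(s,X^{'\tau}_s,\pi^{'\tau}_s,l^{'\tau}_s)\bigr]^2\,ds\,\Big|\,\mathcal{F}_S\Bigr],
\end{equation*}
and \emph{not} the term $\eta\,E[\int_S^T e^{\beta(s-S)}(\delta f_s)^2 ds\,|\,\mathcal{F}_S]$ with $\delta f_s=f'(s,Y'_s,Z'_s,k'_s)-f(s,Y'_s,Z'_s,k'_s)$ evaluated at the \emph{reflected} solution. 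Since $(X^{'\tau},\pi^{'\tau},l^{'\tau})$ varies with $\tau$ and in general differs from $(Y',Z',k')$, the inequality you assert for each fixed $\tau$ is not what the cited estimate gives, and the subsequent $\esssup$ over $\tau$ does not produce \eqref{eqA.1}.

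The paper closes this gap in two moves. First, it bounds the $\tau$-dependent discrepancy by the uniform quantity $\overline{f}_s:=\sup_{y,z,k}|f(s,y,z,k)-f'(s,y,z,k)|$, which yields \eqref{eqA.1} with $\delta f_s$ replaced by $\overline{f}_s$ (a weaker statement). Second, it observes that $(Y',Z',k')$ also solves the RBSDE with obstacle $\xi'$ and driver $g(t,y,z,k):=f(t,y,z,k)+\delta f_t$, and applies the first step to the pair $(f,g)$: for this pair the uniform discrepancy $\sup_{y,z,k}|f-g|(s,y,z,k)$ equals $|\delta f_s|$ exactly, which delivers \eqref{eqA.1} as stated. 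Your proof would be complete if you inserted this driver-replacement argument; as written, it conflates the two distinct quantities $\overline{f}_s$ and $\delta f_s$.
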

\begin{proof}
The proof is divided into two steps.\\
 {\bf Step 1}:
For each $\tau \in \stopo$, let $(X^{\tau}$, $\pi^{\tau}, l^{ \tau})$ (resp. $(X^{' \tau}$, $\pi^{' \tau}, l^{' \tau})$)  be the solution of the BSDE associated with driver $f$  (resp. $f'$), terminal time $\tau $  and terminal condition $\xi_\tau$  (resp. $\xi '_\tau$).
Set  $\overline{X}^{\tau}:=X^{\tau}-X^{' \tau}$. By an estimate on BSDEs (cf. Proposition $A.4$ in \cite{QuenSul}), we have 
\begin{equation*}
 (\overline{X}_S^{\tau})^2  \leq e^{\beta (T-S)} E[\overline{\xi}^2 \mid \mathcal{F}_S]+ \eta  E[\int_S^T e^{\beta (s-S)}{[(f-f')(s, X_s^{' \tau},\pi_s^{' \tau}, l_s^{' \tau}) ]^2ds} \mid \mathcal{F}_S]\quad {\rm a.s.}\,
\end{equation*}
from which we derive 
\begin{equation}\label{A.3}
 (\overline{X}_S^{\tau})^2 \leq e^{\beta (T-S)}E[{\rm ess} \sup_{\tau \in \T_{S,T}} \overline{\xi_\tau}^2| \mathcal{F}_S]+ 
 \eta E[\int_S^Te^{\beta (s-S)}(\overline{f}_s)^2 ds|\mathcal{F}_S] \quad {\rm a.s.},
\end{equation}
where $\overline{f}_s:= \sup_{y,z,k}|f(s,y,z,k)-f'(s,y,z,k)|$.
Now, by Theorem \ref{caranonlinear}, we have\\
 $Y_S ={\rm ess} \sup_{\tau \in \T_{S,T}} X_S^{ \tau}$ and $Y'_S ={\rm ess} \sup_{\tau \in \T_{S,T}} X_S^{' \tau}$.
 We thus  get
  $|\overline{Y}_S| \leq{\rm ess} \sup_{\tau \in \T_{S,T}}|\overline{X}_S^{\tau}|$. By \eqref{A.3},
 we  derive the inequality \eqref{eqA.1} with $\delta f_s$ replaced by $\overline{f}_s$.\\
{\bf Step 2}: 
Note that $(Y', Z', k')$ is the solution the RBSDE associated with obstacle $\xi '$ and 
driver $f(t,y,z,k) + \delta f_t$.  By applying the result of Step 1 to the driver $f(t,y,z,k)$ and the driver $f(t,y,z,k) + \delta f_t$ (instead of $f'$), we get the desired result.
\end{proof}
%

\section{Appendix} 
 Let $M,M' \in {\cal M}^2$. Recall  that $MM'- [ M, M'] $ is a martingale, and 
 that $\langle M, M' \rangle$ is defined as the {\em compensator} of the integrable finite variation process  $[ M, M']$. 
Using these properties we derive the following equivalent statements (cf., e.g.,\cite{Protter} IV.3 for details): \\
$\langle M, M' \rangle_t =0$,  $0 \leq t \leq T$ a.s. $\Leftrightarrow$  $[ M, M']_\cdot$ is a martingale $\Leftrightarrow$ $MM'$ is a martingale.
 \footnote{In this case, using he terminology of \cite{Protter} IV.3, the martingales $M$ and $M'$ are said to be {\em strongly orthogonal}.
 
Note also that, if $M,M' \in {\cal M}^2$,
using the terminology of \cite{Protter} IV.3, the martingales $M$ and $M'$ are said to be  {\em weakly orthogonal} if $(\, M\,, M'  \,)_{{\cal M}^2}=0$, that is $E[M_T M'_T]=0$.}

For the convenience of the reader, we state the following equivalences, which, to our knowledge, are not explicitly specified in the literature.
 \begin{Lemma} \label{orthogonality} 
For each $h \in {\cal M}^2$, the following properties are equivalent:
 \begin{description}
 \item[(i)]
For all predictable process $l$ $\in$ $\H_{\nu}^{2}$,  we have $\langle \, h\,, \int_0^\cdot l_s(e) \tilde N(ds de)\, \rangle_t =0$,  $0 \leq t \leq T$ a.s.
 \item[(ii)]
For all predictable process $l$ $\in$ $\H_{\nu}^{2}$, we have $(\, h \,, \int_0^\cdot \int_E l_s(e) \tilde N(ds de)  \,)_{{\cal M}^2}=0$. 
 \item[(iii)]
$M^P_{N}(\Delta h_\cdot\, \vert \tilde {\cal P})=0$, where $M^P_{N}(\, .\, \vert \tilde {\cal P})$ is the conditional expectation  given $\tilde {\cal P}:= \PC \otimes {\mathscr{E}}$ under the Doleans' measure $M^P_{N}$  associated to probability $P$ and random measure $N$.\footnote{For the definitions of $ M^P_{N}$ and  $M^P_{N}(\, .\, \vert \tilde {\cal P})$ see, e.g.\,, 
chapter III.1 (3.10) and (3.25) in \cite{J}.}
\end{description}
\end{Lemma}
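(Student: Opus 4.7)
The plan is to prove the equivalences in two pieces: (i) $\Leftrightarrow$ (ii) via an optional stopping argument, and (i) $\Leftrightarrow$ (iii) through a direct computation of the predictable bracket $\langle h,\cdot\rangle$ by means of the Dol\'eans measure $M^P_N$ of the Poisson random measure $N$.

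For (i) $\Leftrightarrow$ (ii), recall from the preamble to the lemma that for $M,M'\in\mathcal{M}^2$ the conditions $\langle M,M'\rangle\equiv 0$ and ``$MM'$ is a martingale'' are equivalent. Fix $l\in\mathbb{H}^2_\nu$ and set $M'_t:=\int_0^t\int_E l_s(e)\tilde N(ds,de)$. Then (i) forces $hM'$ to be a martingale, hence in particular $E[h_T M'_T]=0$, which is (ii). Conversely, for every stopping time $\tau\in\stopo$ the truncated integrand $l(s,e)\mathbf{1}_{\{s\leq\tau\}}$ is still predictable and in $\mathbb{H}^2_\nu$, so applying (ii) to it yields $E[h_T M'_\tau]=0$, and this equals $E[h_\tau M'_\tau]$ by optional stopping on $h$. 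Since $\tau$ is arbitrary, $hM'$ is a martingale and (i) follows.

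For (i) $\Leftrightarrow$ (iii) the crucial identity is
\begin{equation*}
[h,M']_t \;=\; \int_0^t\int_E \Delta h_s\, l_s(e)\, N(ds,de),
\end{equation*}
obtained by combining $[h,M']_t=\sum_{s\leq t}\Delta h_s\Delta M'_s$ with $\Delta M'_s=\int_E l_s(e) N(\{s\},de)$; here $\Delta h$ is viewed as an $\mathcal{O}\otimes\mathscr{E}$-measurable function of $(\omega,s,e)$ that is constant in $e$. Taking the dual predictable projection relative to $M^P_N$, using that $ds\otimes\nu(de)$ is the compensator of $N$ and that $l$ is already $\tilde{\mathcal{P}}$-measurable, one obtains
\begin{equation*}
\langle h,M'\rangle_t \;=\; \int_0^t\int_E M^P_N(\Delta h\mid\tilde{\mathcal{P}})(s,e)\, l_s(e)\, \nu(de)\, ds.
\end{equation*}
Condition (iii) then trivially implies that this process vanishes for every predictable $l\in\mathbb{H}^2_\nu$, which is (i). Conversely, (i) says the $L^2(M^P_N)$ scalar product of $M^P_N(\Delta h\mid\tilde{\mathcal{P}})$ with every predictable $l\in\mathbb{H}^2_\nu$ is zero; plugging in a suitable truncation of $\mathrm{sgn}(M^P_N(\Delta h\mid\tilde{\mathcal{P}}))$ (which still lies in $\mathbb{H}^2_\nu$ and is predictable as a $\tilde{\mathcal{P}}$-measurable function) forces $M^P_N(\Delta h\mid\tilde{\mathcal{P}})=0$ $M^P_N$-a.e., which is (iii).

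The main obstacle will be to make rigorous the second displayed identity: one has to lift the jumps of the cadlag martingale $h\in\mathcal{M}^2$ to a $(\omega,s,e)$-function with the appropriate $\mathcal{O}\otimes\mathscr{E}$-measurability, justify $M^P_N$-integrability of the integrand $\Delta h\cdot l$ (by Cauchy--Schwarz using $h\in\mathcal{M}^2$ and $l\in\mathbb{H}^2_\nu$), and invoke the defining property of the conditional expectation $M^P_N(\cdot\mid\tilde{\mathcal{P}})$ relative to the compensator $ds\otimes\nu(de)$ in order to pass from the optional integral $[h,M']$ to its predictable compensator $\langle h,M'\rangle$.
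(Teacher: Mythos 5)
Your proof is correct, but both halves take routes that differ from the paper's. For \textbf{(i)} $\Leftrightarrow$ \textbf{(ii)}, the paper does not use stopping times at all: it tests the bracket against arbitrary bounded predictable processes $\varphi$, uses the identity $\varphi \cdot \langle h, M\rangle = \langle h, \varphi\cdot M\rangle$, and invokes the Dellacherie--Meyer uniqueness theorem (a predictable integrable-variation process all of whose integrals against bounded predictable integrands have zero expectation is null). Your alternative --- truncating $l$ by $\mathbf{1}_{[0,\tau]}$, applying optional sampling to $h$, and using the ``constant expectation at all stopping times'' characterization of martingales together with the equivalence $\langle M,M'\rangle\equiv 0 \Leftrightarrow MM'$ martingale stated in the preamble --- is equally valid and arguably more elementary; just note that the uniform integrability of $hM'$ needed for that characterization follows from Cauchy--Schwarz and Doob's inequality since $h,M'\in{\cal M}^2$. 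For the third equivalence, the paper proves \textbf{(ii)} $\Leftrightarrow$ \textbf{(iii)} by a one-line computation at the terminal time, $(h,M')_{{\cal M}^2}=E([h,M']_T)=E\big(\int_{[0,T]\times E}\Delta h_s\, l_s(e)\,N(ds\,de)\big)=M^P_N(\Delta h_\cdot\, l_\cdot)$, and then simply unwinds the definition of $M^P_N(\cdot\mid\tilde{\cal P})$; because only the expectation at $T$ is needed, the Dol\'eans integral appears directly and no compensation is required. You instead prove \textbf{(i)} $\Leftrightarrow$ \textbf{(iii)} by computing the whole process $\langle h,M'\rangle$ as the dual predictable projection of $\Delta h\,l * N$, which buys you the explicit formula $\langle h,M'\rangle_t=\int_0^t\int_E M^P_N(\Delta h\mid\tilde{\cal P})(s,e)\,l_s(e)\,\nu(de)\,ds$ but at the cost of the heavier Jacod-type projection machinery and the measurability/integrability checks you flag at the end --- precisely the technicalities the paper's terminal-time shortcut avoids. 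Your final truncation of $\mathrm{sgn}(M^P_N(\Delta h\mid\tilde{\cal P}))$ is necessary (and suffices) because $\nu$ is only $\sigma$-finite, so the sign function itself need not lie in $\H_{\nu}^{2}$; since you acknowledge this, the argument closes.
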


\begin{proof} 
Let us show that {\bf (i)} $\Leftrightarrow$ {\bf (ii)}.
By definition of the scalar product $( \cdot , \cdot  )_{{\cal M}^2}$, we have 
$(\, h \,, \int_0^\cdot \int_E l_s(e) \tilde N(ds de)  \,)_{{\cal M}^2}$
 $=$ $E[\, \langle \, h\,,  \int_0^\cdot \int_E l_s(e) \tilde N(ds de)\,\rangle_T]. $
Hence, {\bf (i)} $\Rightarrow$ {\bf (ii)}. Let us show that {\bf (ii)} $\Rightarrow$ {\bf (i)}.
If for all  $l$ $\in$ $\H_{\nu}^{2}$, $E[ \,\langle \, h\,,  \int_0^\cdot l_s(e) \tilde N(ds de)\,\rangle_T]=0$, then, 
 for each bounded predictable process $\varphi$ $\in$ $\H^{2}$, we have
 $$E[\int_0^T \varphi_t \, d\langle \, h,  \int_0^\cdot \int_E  l_s(e) \tilde N(ds de)\,\rangle_t]= 
 E[ \, \langle \,h\,,  \int_0^\cdot \int_E \varphi_s l_s(e) \tilde N(ds de)\,\rangle_T]=0.$$
 since,  for each $M \in {\cal M}^2$, $ \varphi \cdot \langle h, M \rangle =  \langle h, \varphi. M \rangle$ (using the notation of \cite{DM2} or \cite{J}).
 By \cite{DM2} (Chap 6 II Th. 64 p141), this implies that the integrable-variation predictable process 
 $A_\cdot:=\langle \,h\,,  \int_0^\cdot l_s(e) \tilde N(ds de)\rangle_\cdot$ is equal to $0$, which gives that 
 {\bf (ii)} $\Rightarrow$ {\bf (i)}. Hence {\bf (i)} $\Leftrightarrow$ {\bf (ii)}.
 

It remains to show that {\bf (ii)} $\Leftrightarrow$ {\bf (iii)}. 
Note first that $(\, h \,, \int_0^\cdot \int_E l_s(e) \tilde N(ds de)  \,)_{{\cal M}^2}$ $=$ $E([ \,h\,,  \int_0^\cdot \int_E l_s(e) \tilde N(ds de)\,]_T)$
$=E(\int_{[0,T] \times E} \Delta h_s l_s(e)N(ds de))$ $=$ $M^P_{N}( \Delta h_\cdot \, l_\cdot )$.
Property {\bf (ii)} can thus be written as $M^P_{N}( \Delta h_\cdot \, l_\cdot )=0$ for all 
$l_\cdot$ $\in$ $\H_{\nu}^{2}$, which means that
$M^P_{N}(\Delta h_\cdot\, \vert \tilde {\cal P})=0$.
Hence, {\bf (ii)} $\Leftrightarrow$ {\bf (iii)}.
\end{proof}
\textbf{Proof of Lemma \ref{Lemma_estimate}:}
Let $\beta>0$ and  $\varepsilon>0$ be such that $\beta\geq\frac 1 {\varepsilon^2}$.
We note that $\tilde Y_T=\xi_T-\xi_T=0;$ moreover, we have
$-d\tilde Y_t=\tilde f(t) dt +d\tilde A_t+d\tilde C_{t-}-\tilde Z_t dW_t -\int_{E} \tilde k_t(e) \tilde N(dt,de) -d\tilde h_t.$

Thus we see that $\tilde Y$ is an {\em optional strong  semimartingale} in the vocabulary of \cite{Galchouk} 
%
 with decomposition 
$\tilde Y= \tilde Y_0+ M+A+B$, where
$M_t:=\int_0^t \tilde Z_s dW_s+\int_0^t \int_{E} \tilde k_s(e) \tilde N(ds,de) {\bf + \tilde h_t}$, $A_t:= -\int_0^t  \tilde f(s) ds- \tilde A_t$ and 
$B_t:=-\tilde C_{t-}$. 
Applying Gal'chouk-Lenglart's formula (more precisely Corollary A.2 in \cite{MG}) to $\e^{\beta t}\tilde Y_t^2$, and using that  $\tilde Y_T=0$, and the property  
$\langle h^c, W \rangle =0$, we get, almost surely, for all $t\in[0,T]$,
%
\begin{equation}\label{eq0_lemma_estimate}
\begin{aligned}
&\e^{\beta t}\tilde Y_t^2+ \int_{]t,T]} \e^{\beta s} \tilde Z_s^2 ds 
 + {\int_{]t,T]} \e^{\beta s} d\langle \tilde h^c \rangle _s }
 =  -\int_{]t,T]}\beta\e^{\beta s} (\tilde Y_{s})^2 ds+
2\int_{]t,T]} \e^{\beta s}\tilde Y_{s}\tilde f(s) ds\\&+2\int_{]t,T]} \e^{\beta s}\tilde Y_{s-}d\tilde A_s
-(\tilde M_T -\tilde M_t)
 -\sum_{t<s\leq T}\e^{\beta s}(\Delta \tilde Y_s)^2 +2\int_{[t,T[} \e^{\beta s}\tilde Y_{s}d\tilde C_{s}-\sum_{t\leq s<T}\e^{\beta s}(\tilde Y_{s+}-\tilde Y_{s})^2.
\end{aligned}
\end{equation}
where 
\begin{equation}\label{defM}
\tilde M_t:= 2\int_{]0,t]} \e^{\beta s}\tilde Y_{s-}\tilde Z_s d W_s+2
\int_{]0,t]} \e^{\beta s}\int_E \tilde Y_{s-}\tilde k_s(e) \tilde N(ds,de)
  +2\int_{]0,t]} \e^{\beta s}\tilde Y_{s-}d\tilde h_s.
 \end{equation}
By the same arguments as in \cite{MG} (cf. the proof of Lemma 3.2 in \cite{MG} for details), since $\beta\geq\frac 1 {\varepsilon^2}$, we obtain the following estimate for the sum of the first and the  second term  on the r.h.s. of equality  \eqref{eq0_lemma_estimate}:
%
$
-\int_{]t,T]}\beta\e^{\beta s} (\tilde Y_{s})^2 ds+2\int_{]t,T]}  \e^{\beta s}\tilde Y_{s}\tilde f(s) ds \leq \varepsilon^2 \int_{]t,T]}  \e^{\beta s}\tilde f^2(s) ds.
$\\
We also have that  $\int_{[t,T[} \e^{\beta s}\tilde Y_{s}d\tilde C_{s}\leq 0$ and $\int_{]t,T]}$ $ \e^{\beta s}\tilde Y_{s-}d\tilde A_s \leq 0.$
We give the detailed arguments for the second inequality (the arguments for the first are similar). We have $\int_{]t,T]}$ $ \e^{\beta s}\tilde Y_{s-}d\tilde A_s=\int_{]t,T]}$ $ \e^{\beta s}\tilde Y_{s-}d A^1_s-\int_{]t,T]}$ $ \e^{\beta s}\tilde Y_{s-}d A^2_s$. For the first term, we write
$\int_{]t,T]}$ $ \e^{\beta s}\tilde Y_{s-}d A^1_s=\int_{]t,T]}$ $ \e^{\beta s} (Y^1_{s-}-Y^2_{s-})d A^1_s=\int_{]t,T]}$ $ \e^{\beta s} (Y^1_{s-}-\overline{\xi}_s)d A^1_s+\int_{]t,T]}$ $ \e^{\beta s}(\overline{\xi}_s- Y^2_{s-})d A^1_s$. The second summand is nonpositive as $Y^2_{s-}\geq \overline{\xi}_s$ (which is due to $Y^2_{s}\geq \xi_s,$ for all $s$). The first summand is equal to $0$ due to the Skorokhod condition for $A^1$.  Hence, $\int_{]t,T]}$ $ \e^{\beta s}\tilde Y_{s-}d A^1_s\leq 0$.  By similar arguments, we see that $-\int_{]t,T]}$ $ \e^{\beta s}\tilde Y_{s-}d A^2_s\leq 0$. Hence, $\int_{]t,T]}$ $ \e^{\beta s}\tilde Y_{s-}d\tilde A_s \leq 0.$  

The above observations, together with equation \eqref{eq0_lemma_estimate}, yield that a.s., for all $t\in[0,T]$,  
\begin{equation}\label{eq7_lemma_estimate}
\begin{aligned}
\e^{\beta t}\tilde Y_t^2+\int_{]t,T]} \e^{\beta s} \tilde Z_s^2 ds  + \int_{]t,T]} \e^{\beta s} d\langle \tilde h^c \rangle _s &\leq \varepsilon^2 \int_{]t,T]}  \e^{\beta s}\tilde f^2(s) ds-
(\tilde M_T -\tilde M_t)-\sum_{t<s\leq T}\e^{\beta s}(\Delta \tilde Y_s)^2,
\end{aligned}
\end{equation}
from which we derive  estimates for $\|\tilde Z\|^2_\beta$,
$\|\tilde k\|^2_{\nu,\beta}$, $\|\tilde h\|^2_{\beta, \mathcal{M}^2}$, and then an estimate for $\vvertiii{\tilde Y}^2_\beta.$
\paragraph*{ Estimate for $\|\tilde Z\|^2_\beta$, 
$\|\tilde k\|^2_{\nu,\beta}$ and $\|\tilde h \|^2_{\beta, {\cal M}^2}$}
Note first that we have: 
\begin{equation*}
\begin{aligned}
  \sum_{t< s\leq T}\e^{\beta s}(\Delta \tilde h_s) ^2&+\int_{]t,T]} \e^{\beta s} ||\tilde k_s||_{\nu}^2 ds-\sum_{t<s\leq T}\e^{\beta s}(\Delta \tilde Y_s)^2
=-\sum_{t<s\leq T}\e^{\beta s}(\Delta \tilde A_s) ^2\\&-\int_{]t,T]} \e^{\beta s}\int_E \tilde k_s^2(e)  \tilde N(ds,de)
  -2 \sum_{t<s\leq T}\e^{\beta s} \Delta \tilde A_s \Delta \tilde h_s -2 \sum_{t<s\leq T}\e^{\beta s}\tilde k_s(p_s) \Delta \tilde h_s ,
\end{aligned}
\end{equation*}
where, we have used the fact that the processes $A_\cdot$ and $N(\cdot,de)$ "do not have jumps in common",  since $A$ (resp. $N(\cdot,de)$) jumps only at predictable (resp. totally inaccessible) stopping times.\\ 
By adding the term $\int_{]t,T]} \e^{\beta s}|| \tilde k_s||_{\nu}^2 ds$ $+   \sum_{t< s\leq T}\e^{\beta s}(\Delta \tilde h_s) ^2$ on both sides of inequality \eqref{eq7_lemma_estimate}, by using the above computation   and the well-known equality $[\tilde h]_t= \langle \tilde h^c \rangle_t+ \sum (\Delta \tilde h)_s^2$, we get 
\begin{equation}\label{eq_last_000}
\begin{aligned}
 \e^{\beta t}\tilde Y_t^2+\int_{]t,T]} \e^{\beta s} \tilde Z_s^2 ds &+\int_{]t,T]} \e^{\beta s}|| \tilde k_s||_{\nu}^2 ds  +  \int_{]t,T]} \e^{\beta s} d[ \tilde h] _s 
\leq \varepsilon^2 \int_{]t,T]}  \e^{\beta s}\tilde f^2(s) ds - (M'_T- M'_t)\\
& -2 \sum_{t<s\leq T}\e^{\beta s} \Delta \tilde A_s \Delta \tilde h_s -2 \int_t^T d[\tilde h\,, \int_0^\cdot \int_E \e^{\beta s}\tilde k_s(e) \tilde N(ds,de)\,]_s,
\end{aligned}
\end{equation}
with $M'_t= \tilde M_t + \int_{]t,T]} \e^{\beta s}\int_E  \tilde k_s^2(e)  \tilde N(ds,de)$ (where $\tilde M$ is given by \eqref{defM}).\\
  By classical arguments, which  use Burkholder-Davis-Gundy inequalities, we can show that the local martingale $M'$ is a martingale.
Moreover, since $\tilde h$  $\in$ 
 ${\cal M}^{2,\bot}$, by Remark \ref{2i}, 
  we derive that  the expectation of the last term of the above inequality \eqref{eq_last_000} is equal to $0$.
Furthermore, since $\tilde h$ is a martingale, for each predictable stopping time $\tau$, we have 
$E[\Delta \tilde h_\tau / {\cal F}_{\tau-}]=0$ (cf., e.g., Chapter I, Lemma (1.21) in \cite{J}). Moreover, 
since $\tilde A$ is predictable, $\Delta \tilde A_\tau$ is ${\cal F}_{\tau-}$-measurable (cf., e.g., Chap I (1.40)-(1.42) in \cite{J}), which implies that
$E[\Delta \tilde A_\tau \Delta \tilde h_\tau / {\cal F}_{\tau-}]=
\Delta \tilde A_\tau E[ \Delta \tilde h_\tau / {\cal F}_{\tau-}]=0$. We thus get 
$E[\sum_{0< s \leq T}\e^{\beta s}\Delta \tilde A_s \Delta \tilde h_s]=0.$

By applying \eqref{eq_last_000} with $t=0$, and by taking expectations on both sides of the resulting inequality, we obtain
$\tilde Y_0^2+ \|\tilde Z\|^2_\beta+\|\tilde k\|^2_{\nu,\beta}+  \|\tilde h \|^2_{\beta, {\cal M}^2} \leq \varepsilon^2\|\tilde f\|^2_\beta.$        
We deduce that $\|\tilde Z\|^2_\beta\leq \varepsilon^2\|\tilde f\|^2_\beta$, $ \|\tilde k\|^2_{\nu,\beta}\leq \varepsilon^2\|\tilde f\|^2_\beta  $ and $\|\tilde h \|^2_{\beta, {\cal M}^2} \leq \varepsilon^2\|\tilde f\|^2_\beta$, which are the desired estimates \eqref{eq_initial_Lemma_estimate}.


\paragraph*{Estimate for $\vvertiii{\tilde Y}^2_\beta$}
%
From inequality \eqref{eq7_lemma_estimate} we derive that,
for all $\tau \in \stopo$,  a.s.,\\
$\e^{\beta \tau}\tilde Y_\tau^2 \leq \varepsilon^2 \int_{]\tau,T]}  \e^{\beta s}\tilde f^2(s) ds-
(\tilde M_T -\tilde M_\tau),$ 
where $\tilde M$ is  given by \eqref{defM}.

%
Using first Chasles' relation for  stochastic integrals, then taking the essential supremum over $\tau\in\stopo$ and  the expectation on both sides of the above inequality, we obtain   
\begin{equation}\label{eq9_lemma_estimate} 
\begin{aligned}
E[\esssup_{\tau \in \stopo}\e^{\beta \tau}\tilde Y_\tau^2] \leq \varepsilon^2 \|\tilde f\|^2_\beta  
&+2E[\esssup_{\tau \in \stopo}|\int_0^\tau \e^{\beta s}\tilde Y_{s-}\tilde Z_s d W_s|] +  2E[\esssup_{\tau \in \stopo}|\int_0^\tau \e^{\beta s}\tilde Y_{s-}d\tilde h_s |]  \\
& + 2E[\esssup_{\tau \in \stopo}|\int_{]0,\tau]} \e^{\beta s}\int_E \tilde Y_{s-}\tilde k_s(e) \tilde N(ds,de)|].
\end{aligned}
\end{equation}
Let us  consider the third term of the r.h.s. of the inequality \eqref{eq9_lemma_estimate}.
By Burkholder-Davis-Gundy inequalities, we have
$
 E[\esssup_{\tau \in \stopo}|\int_0^\tau \e^{\beta s}\tilde Y_{s-}d\tilde h_s |] 
\leq c E[ \sqrt{ \int_0^T \e^{2\beta s} \tilde Y_{s-}^2 d[\tilde h]_s} ]. 
$
 This inequality and the trivial inequality $ab\leq \frac 1 2 a^2+\frac 1 2 b^2$ lead to  
$$
2E[\esssup_{\tau \in \stopo}|\int_0^\tau \e^{\beta s}\tilde Y_{s-}d\tilde h_s |] 
\leq  E\left[ \sqrt{\frac 1 2\esssup_{\tau\in\stopo}\e^{\beta \tau}\tilde Y_{\tau}^2}\sqrt{8c^2
\int_0^T \e^{\beta s}d[\tilde h]_s }\right]
\leq \frac 1 4 \vvertiii{\tilde Y}^2_\beta + 
 4c^2\|\tilde h\|^2_{\beta, \mathcal{M}^2}. 
$$
 By using similar arguments, we get 
$
2E[\esssup_{\tau \in \stopo}\int_0^\tau \e^{\beta s}\tilde Y_{s-}\tilde Z_s d W_s]\leq 
\frac 1 4 \vvertiii{\tilde Y}^2_\beta+ 4c^2\|\tilde Z\|^2_{\beta},
$
and a similar estimate for the last term in \eqref{eq9_lemma_estimate}.
By \eqref{eq9_lemma_estimate}, we thus have
$\frac 1 4 \vvertiii{\tilde Y}^2_\beta \leq \varepsilon^2 \|\tilde f\|^2_\beta+4c^2 ( \|\tilde Z\|^2_\beta+ \|\tilde k\|^2_{\nu,\beta} +  \|\tilde h\|^2_{\beta, {\cal M}^2}).$ 
Using the estimates for $\|\tilde Z\|^2_\beta$, 
$\|\tilde k\|^2_{\nu,\beta}$ and $\|\tilde h \|^2_{\beta, {\cal M}^2}$(cf. \eqref{eq_initial_Lemma_estimate}), we thus get
$ \vvertiii{\tilde Y}^2_\beta \leq 4\varepsilon^2(1+12c^2) \|\tilde f\|^2_\beta,$
which is the desired result.  
{\hspace*{.1pt}\hspace*{\fill}\BOX\vskip\baselineskip} 
\begin{Remark}\label{EDSRclassique}
 We note that this proof shows that  the estimates \eqref{eq_initial_Lemma_estimate} and
\eqref{deuxeq} also hold in the simpler case of a non reflected BSDE. From this result, together with Lemma \ref{theoreme representation}, and using the same arguments as in the proof of Theorem \ref{rexiuni}, we easily derive the existence and the uniqueness of the solution of the non reflected BSDE with general filtration from Definition \ref{BSDE}. Similarly, we can show the comparison result for non reflected BSDEs with general filtration under the Assumption \ref{Royer}. 
\end{Remark}


\begin{Lemma}\label{compref} 
Let $f$ be a  Lipschitz driver satisfying Assumption  \ref{Royer}. Let $A$  be a nondecreasing 
 right-continuous predictable process in ${\cal S}^2$ with $A_0=0$
 and let $C$ be a nondecreasing 
  right-continuous adapted purely discontinuous process in ${\cal S}^2$ with $C_{0-}=0$.\\
   Let $(Y,Z,k,h) \in {\cal S}^2 \times \mathbb{H}^2\times \mathbb{H}^2_\nu\times {\cal M}^{2, \bot}$  satisfy \\
$ -d  Y_t  \displaystyle =  f(t,Y_{t}, Z_{t}, k_t)dt + dA_t + d C_{t-}-  Z_t dW_t-\int_E k_t(e)\tilde{N}(dt,de) - dh_t, \quad 0\leq t\leq T.
$\\
%
Then the process $(Y_t)$ is a strong ${\cal E}^f$-supermartingale. 
\end{Lemma}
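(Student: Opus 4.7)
Fix two stopping times $\sigma \le \tau$ in $\stopo$. The goal is to show $Y_\sigma \ge \mathcal{E}^f_{\sigma,\tau}(Y_\tau)$ a.s., which is the strong $\mathcal{E}^f$-supermartingale property by Definition \ref{defmart}. Let $(\tilde Y,\tilde Z,\tilde k,\tilde h)$ be the unique solution of the BSDE on $[0,\tau]$ with driver $f$ and terminal value $Y_\tau$ (well-posed by Remark \ref{EDSRclassique}); then $\tilde Y_\sigma = \mathcal{E}^f_{\sigma,\tau}(Y_\tau)$. Set $\bar Y := Y-\tilde Y$, $\bar Z:=Z-\tilde Z$, $\bar k:=k-\tilde k$, $\bar h := h-\tilde h$. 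On $[\sigma,\tau]$, $\bar Y_\tau=0$ and
$$-d\bar Y_t = \bigl[f(t,Y_t,Z_t,k_t)-f(t,\tilde Y_t,\tilde Z_t,\tilde k_t)\bigr]dt + dA_t + dC_{t-} - \bar Z_t\,dW_t - \int_E \bar k_t(e)\tilde N(dt,de) - d\bar h_t.$$

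The next step is to linearize the driver difference using exactly the same device as in the proof of Theorem \ref{thmcomprbsde}: there exist bounded predictable processes $\delta_t,\beta_t$, a bounded predictable function $\gamma_t(e)$ with $\gamma_t(e)\ge -1$ (coming from Assumption \ref{Royer} applied to the $\mathpzc{k}$-variable, with the $y$- and $z$-variables handled by the Lipschitz property of $f$), and a process $\varphi_t\ge 0$ such that
$$f(t,Y_t,Z_t,k_t)-f(t,\tilde Y_t,\tilde Z_t,\tilde k_t) = \delta_t\bar Y_t + \beta_t \bar Z_t + \langle \gamma_t,\bar k_t\rangle_\nu + \varphi_t, \quad dP\otimes dt\text{-a.e.}$$
I then introduce the Dol\'eans--Dade exponential $\Gamma$ on $[\sigma,\tau]$ defined by $\Gamma_\sigma=1$ and
$d\Gamma_t = \Gamma_{t-}\bigl[\delta_t dt + \beta_t dW_t + \int_E \gamma_t(e)\tilde N(dt,de)\bigr]$; because $\gamma\ge -1$ and $\delta,\beta,\gamma$ are bounded, $\Gamma$ is a nonnegative square-integrable cadlag process.

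The key step is to apply the Gal'chouk--Lenglart product rule to the strong optional semimartingale $\Gamma_t\bar Y_t$ on $[\sigma,\tau]$ (the process $\bar Y$ is only ladlag, with right jumps $-\Delta C$ coming from $dC_{t-}$, while $\Gamma$ is cadlag). Plugging in the linearization and the SDE for $\Gamma$, the $\delta_t\bar Y_t$, $\beta_t \bar Z_t$ and $\langle\gamma_t,\bar k_t\rangle_\nu$ terms cancel against the corresponding terms produced by $\bar Y_{t-}d\Gamma_t$ and the quadratic covariation $d[\Gamma,\bar Y]_t$; after organizing what remains (exactly as in the computation between \eqref{eq_product} and \eqref{eq_product3} of the comparison proof), one obtains an identity of the form
$$\Gamma_\sigma\bar Y_\sigma = (\mathcal{M}_\tau-\mathcal{M}_\sigma) + \int_\sigma^\tau \Gamma_{t-}\varphi_t\, dt + \int_\sigma^\tau \Gamma_{t-}\,dA_t + \int_\sigma^\tau \Gamma_t\,dC_{t-} + \int_\sigma^\tau (1+\gamma_t(p_t))\,d[\bar h,\cdot]_t,$$
where $\mathcal{M}$ collects genuine martingale increments (driven by $W$, $\tilde N$, and $\bar h$).

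To conclude I take $E[\,\cdot\mid \mathcal{F}_\sigma]$. The martingale term vanishes, the bracket involving $\bar h$ has zero conditional expectation because $\bar h\in\mathcal{M}^{2,\bot}$ (using Remark \ref{2i} and Lemma \ref{orthogonality}, exactly as at the end of the proof of Theorem \ref{thmcomprbsde}), and the three remaining integrals are nonnegative since $\Gamma\ge 0$, $\varphi\ge 0$, and $A$, $C_-$ are nondecreasing. Hence $\bar Y_\sigma\ge 0$ a.s., which yields $Y_\sigma \ge \tilde Y_\sigma = \mathcal{E}^f_{\sigma,\tau}(Y_\tau)$. The main obstacle I expect is the bookkeeping in the product rule computation: one must carefully separate the continuous, predictable-jump, and right-jump (from $C_-$) contributions of $\bar Y$ against the cadlag process $\Gamma$, and verify that the sign of the $dA$-term is correct (here, unlike in Theorem \ref{thmcomprbsde}, no Skorokhod condition is available, which is precisely why we are content with $\bar Y_\sigma\ge 0$ rather than attempting a Tanaka-type argument on $\bar Y^+$).
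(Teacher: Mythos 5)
Your argument is correct and coincides with the proof the paper has in mind: the paper omits the details, referring to Proposition A.5 of \cite{MG} (a linearization of the driver difference between $Y$ and the BSDE solution with terminal value $Y_\tau$, followed by the change of measure/Dol\'eans--Dade exponential and the product rule) together with the orthogonality treatment of $h\in\mathcal{M}^{2,\bot}$ via Remark \ref{2i}, which is exactly what you reconstruct. Your observation that the absence of Skorokhod conditions is harmless here --- because $A$ and $dC_{-}$ enter with a favourable sign, so no Tanaka-type argument on $\bar Y^{+}$ is needed --- is the right way to see why this lemma is easier than Theorem \ref{thmcomprbsde}.
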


The proof is omitted since it relies on the same arguments as those used in the proof of the same result shown in \cite{MG} in the particular case when the filtration is associated with $W$ and $N$ (cf. Proposition A.5 in \cite{MG}), as well as on some specific arguments, due to the general filtration, which are similar to those used in the proof of the previous lemma. 
\section{Complements: The strict value}

In this section we give some complements on a closely related (non-linear) optimal stopping problem.\\ 
Let $S$ be a stopping time in $\stopo$. 
We denote by $\mathcal{T}_{S^+}$ the set of stopping times
$\tau\in \stopo$ with $\tau>S $ a.s. on $\{S<T\}$ and $\tau=T$ a.s. on $\{S=T\}$.
\emph{The strict value} $V^+(S)$ (at time $S$) of the non-linear optimal stopping problem is defined by
\begin{align}\label{nuValeur}
V^{+}(S):= \esssup_{\tau \in \mathcal{T}_{S^+}} {\cal E}^{f}_{S,\tau} (\xi_\tau).
\end{align}
We note that $V^{+}(S)= \xi_T$ a.s. on $\{S=T\}$.\\
Using the same arguments as for the value family $(V(S))_{S\in\stopo}$, we show that
\begin{Proposition}\label{Property_0}
The strict value family  $(V^+(S))_{S\in\stopo}$ is a strong $\mathcal{E}^f$-supermartingale family. There exists a unique right-uppersemicontinuous optional process, denoted by $(V^+_t)_{t\in[0,T]}$, which aggregates the family $(V^+(S))_{S\in\stopo}$. The process $(V^+_t)_{t\in[0,T]}$ is a strong $\mathcal{E}^f$-supermartingale.  
\end{Proposition}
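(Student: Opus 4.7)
The plan is to mimic the three-step strategy carried out for the value family $V$ in Theorems \ref{Prop_sup} and \ref{Prop_agrege}: first establish admissibility and stability under pairwise maximization of the family $(V^+(S))_{S\in\stopo}$, then deduce the strong $\mathcal{E}^f$-supermartingale family property, and finally invoke Lemma \ref{Prop_right} together with the aggregation theorem of Dellacherie-Lenglart \cite{DelLen2} to produce the r.u.s.c. optional aggregator.

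First, admissibility of the family $(V^+(S))_{S\in\stopo}$ is obtained along the lines of Lemma \ref{P1.Adm}, the only modification being the construction used to check the consistency property on $\{S=S'\}$. Given $\tau\in\mathcal{T}_{S^+}$ and $A:=\{S=S'\}\in\mathcal{F}_S$, set $\tau_A:=\tau{\bf 1}_A+T{\bf 1}_{A^c}$; one verifies directly that $\tau_A\in\mathcal{T}_{(S')^+}$, so the same $f$-expectation computation (using the driver $f^{\tau}{\bf 1}_A$) yields ${\bf 1}_A V^+(S)\le{\bf 1}_A V^+(S')$, and the reverse inequality follows by symmetry. Stability under pairwise maximization is proved exactly as in Lemma \ref{P1.2a}: given $\tau,\tau'\in\mathcal{T}_{S^+}$, the stopping time $\nu:=\tau{\bf 1}_A+\tau'{\bf 1}_{A^c}$ associated with the measurable set $A:=\{\mathcal{E}^f_{S,\tau'}(\xi_{\tau'})\le \mathcal{E}^f_{S,\tau}(\xi_\tau)\}\in\mathcal{F}_S$ again belongs to $\mathcal{T}_{S^+}$ (on $\{S<T\}$, either $\tau>S$ or $\tau'>S$, hence $\nu>S$), and this delivers a nondecreasing optimizing sequence.

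Next, we verify the strong $\mathcal{E}^f$-supermartingale family property: for $S\le S'$ a.s. in $\stopo$, we want $\mathcal{E}^f_{S,S'}(V^+(S'))\le V^+(S)$ a.s. The key observation is that $\mathcal{T}_{(S')^+}\subset\mathcal{T}_{S^+}$ whenever $S\le S'$ a.s.: indeed, if $\tau\in\mathcal{T}_{(S')^+}$ then on $\{S<T\}\cap\{S'<T\}$ we have $\tau>S'\ge S$, on $\{S<T\}\cap\{S'=T\}$ we have $\tau=T>S$, and on $\{S=T\}$ necessarily $S'=T$ and $\tau=T=S$. Given the optimizing sequence $(\tau_n)\subset\mathcal{T}_{(S')^+}$ from the previous step, the continuity of BSDEs with respect to terminal condition and the consistency of conditional $f$-expectations yield
\[
\mathcal{E}^f_{S,S'}(V^+(S'))=\lim_{n\to\infty}\mathcal{E}^f_{S,\tau_n}(\xi_{\tau_n})\le\esssup_{\tau\in\mathcal{T}_{S^+}}\mathcal{E}^f_{S,\tau}(\xi_\tau)=V^+(S)\quad\text{a.s.,}
\]
which is the desired strong $\mathcal{E}^f$-supermartingale property of the family.

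Finally, Lemma \ref{Prop_right} applied to the strong $\mathcal{E}^f$-supermartingale family $(V^+(S))_{S\in\stopo}$ shows that it is right-uppersemicontinuous along stopping times; Theorem 4 of \cite{DelLen2} then produces a unique (up to indistinguishability) r.u.s.c. optional process $(V^+_t)_{t\in[0,T]}$ aggregating the family. The strong $\mathcal{E}^f$-supermartingale property of the process $(V^+_t)$ is an immediate consequence of the aggregation identity $V^+_S=V^+(S)$ a.s.\ combined with the supermartingale family property. The main (mild) subtlety of the whole proof is the careful handling of the strict inequality constraint $\tau>S$ on $\{S<T\}$ in the admissibility, stability and family-supermartingale arguments, in particular the inclusion $\mathcal{T}_{(S')^+}\subset\mathcal{T}_{S^+}$ for $S\le S'$.
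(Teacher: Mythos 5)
Your proposal is correct and follows essentially the same route as the paper, which itself only states that the proposition is shown ``using the same arguments as for the value family $(V(S))_{S\in\stopo}$'' (i.e., Lemmas \ref{P1.Adm}, \ref{P1.2a}, \ref{Prop_right} and Theorems \ref{Prop_sup}, \ref{Prop_agrege}). You have merely filled in the details the paper leaves implicit, and your treatment of the strict-inequality constraint — in particular the verifications that $\tau_A,\nu\in\mathcal{T}_{S^+}$ and the inclusion $\mathcal{T}_{(S')^+}\subset\mathcal{T}_{S^+}$ for $S\le S'$ — is exactly the point that needs checking and is handled correctly.
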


The following theorem connects the above  strict value process $(V^+_t)_{t\in[0,T]}$  with the process of right-limits $(V_{t+})_{t\in[0,T]}$, where $(V_t)$ denotes as before the value process of our non-linear problem \eqref{vvv}.

\begin{Theorem}\label{thm_value_nu}
\begin{description}
\item[(i)] The strict value process $(V^{+}_t)$ is right-continuous. 
\item[(ii)] For all $S\in\stopo$, $V^{+}_S=V_{S+}$ a.s.   
\item[(iii)] For all $S\in\stopo$, $V_S=V^{+}_S\vee \xi_S$ a.s. 
\end{description}
\end{Theorem}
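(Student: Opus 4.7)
The plan is to prove (iii) first, (ii) second (using (iii) for one direction), and (i) as an immediate consequence of (ii).

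\textbf{For (iii):} The trivial inclusions $\mathcal{T}_{S^+}\subset\stops$ and $S\in\stops$ give $V_S\geq V^+_S\vee\xi_S$. For the reverse, given $\tau\in\stops$, I would set $A:=\{\tau=S\}\cap\{S<T\}\in\mathcal{F}_S$ and $\tau':=T\mathbf{1}_A+\tau\mathbf{1}_{A^c}\in\mathcal{T}_{S^+}$; the $\mathcal{F}_S$-locality of the conditional $f$-expectation used in the proof of Lemma~\ref{P1.Adm} yields $\mathcal{E}^f_{S,\tau}(\xi_\tau)=\mathbf{1}_A\xi_S+\mathbf{1}_{A^c}\mathcal{E}^f_{S,\tau'}(\xi_{\tau'})\leq\xi_S\vee V^+_S$. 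Taking the essential supremum over $\tau$ yields the reverse inequality.

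\textbf{For (ii), the direction $V^+_S\leq V_{S+}$:} fix $\tau\in\mathcal{T}_{S^+}$ and set $\tau_n:=(S+1/n)\wedge\tau\wedge T\in\mathcal{T}_{S^+}$, so that $\tau_n\downarrow S$ with $\tau_n\leq\tau$. The strong $\mathcal{E}^f$-supermartingale property of $V$ (Theorem~\ref{Prop_agrege}) gives $V_{\tau_n}\geq\mathcal{E}^f_{\tau_n,\tau}(\xi_\tau)$; the ladlag-ness of $V$ (from the $\mathcal{E}^f$-Mertens decomposition, Theorem~\ref{calmertens}) combined with the right-continuity of BSDE solutions in the initial time then yields $V_{S+}\geq\mathcal{E}^f_{S,\tau}(\xi_\tau)$, and $V_{S+}\geq V^+_S$ follows after an essential supremum over $\tau$.

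\textbf{For the direction $V^+_S\geq V_{S+}$:} fix $\tau_n\downarrow S$ strictly in $\mathcal{T}_{S^+}$. The pairwise-maximization argument of Lemma~\ref{P1.2a}, applied to the family defining $V^+(\tau_n)$, yields $(\sigma_n^k)_k\subset\mathcal{T}_{\tau_n^+}$ with $\mathcal{E}^f_{\tau_n,\sigma_n^k}(\xi_{\sigma_n^k})\uparrow V^+_{\tau_n}$ as $k\to\infty$. Letting $B_n:=\{V^+_{\tau_n}\geq\xi_{\tau_n}\}\in\mathcal{F}_{\tau_n}$ and $\tau_n^{(k)}:=\sigma_n^k\mathbf{1}_{B_n}+\tau_n\mathbf{1}_{B_n^c}\in\mathcal{T}_{S^+}$, the $\mathcal{F}_{\tau_n}$-locality of $\mathcal{E}^f$ gives
\[
\mathcal{E}^f_{\tau_n,\tau_n^{(k)}}(\xi_{\tau_n^{(k)}})=\mathbf{1}_{B_n}\mathcal{E}^f_{\tau_n,\sigma_n^k}(\xi_{\sigma_n^k})+\mathbf{1}_{B_n^c}\xi_{\tau_n}\;\uparrow\;V^+_{\tau_n}\vee\xi_{\tau_n}=V_{\tau_n},
\]
where the last equality is (iii) applied at $\tau_n$. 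Composing with $\mathcal{E}^f_{S,\tau_n}$ and invoking consistency together with the continuity of BSDEs in the terminal condition, I would pass to the limit $k\to\infty$ to obtain $V^+_S\geq\mathcal{E}^f_{S,\tau_n}(V_{\tau_n})$ for each $n$; then ladlag-ness of $V$ with dominated convergence gives $V_{\tau_n}\to V_{S+}$ in $L^2$, and BSDE continuity yields $\mathcal{E}^f_{S,\tau_n}(V_{\tau_n})\to V_{S+}$, whence $V^+_S\geq V_{S+}$.

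\textbf{For (i) and the main obstacle:} Having established (ii), $V^+_S=V_{S+}$ for every $S\in\stopo$; since $V$ is ladlag, the process $t\mapsto V_{t+}$ is optional and right-continuous, and the uniqueness of the optional aggregator in Proposition~\ref{Property_0} forces $V^+$ to be indistinguishable from $t\mapsto V_{t+}$, hence right-continuous. The hard step is $V^+_S\geq V_{S+}$ in (ii): a naive passage to the limit in $V^+_S\geq\mathcal{E}^f_{S,\tau_n}(\xi_{\tau_n})$ would demand control of $\limsup_n\xi_{\tau_n}$ for a completely irregular $\xi$, which is unavailable; the device of the mixed stopping times $\tau_n^{(k)}$, combined with (iii) at $\tau_n$, inserts $V_{\tau_n}=V^+_{\tau_n}\vee\xi_{\tau_n}$ as an increasing limit inside the conditional $f$-expectation, after which consistency and BSDE continuity transport the bound cleanly to $V_{S+}$.
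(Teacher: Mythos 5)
Your proof is correct, but it reverses the paper's logical order and swaps out its two key ingredients. The paper proves (i) first, via the dedicated Lemma \ref{Property_2} (right-continuity of $(V^{+}_t)$ along stopping times in $\mathcal{E}^f$-conditional expectation, established by a contradiction argument with optimizing sequences), upgraded to genuine right-continuity by Proposition 2 of Dellacherie--Lenglart; it then deduces (ii), using (i) for the inequality $V_{S+}\geq V^{+}_S$ and an optimizing-sequence/consistency argument for the converse, and finally obtains (iii) as a consequence of (ii) together with the RBSDE characterization of the value process (Theorem \ref{caranonlinear} combined with Remark \ref{Rmk_the_jumps_of_C}, i.e. $V=Y$ and $Y_\tau=Y_{\tau+}\vee\xi_\tau$). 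You instead prove (iii) by a self-contained locality argument on the terminal stopping time (splitting on $\{\tau=S\}\cap\{S<T\}$), which avoids the RBSDE machinery entirely; you prove $V_{S+}\geq V^{+}_S$ directly from the strong $\mathcal{E}^f$-supermartingale property of $V$ and the right-continuity of non-reflected BSDE solutions in the initial time, so no appeal to (i) is needed; you prove $V^{+}_S\geq V_{S+}$ by essentially the paper's optimizing-sequence device (your detour through the sets $B_n$ and statement (iii) is slightly more roundabout than the paper's observation that any $\tau_p\geq S_n>S$ on $\{S<T\}$ already lies in $\mathcal{T}_{S^+}$, but both are valid); and you then get (i) for free from (ii), the right-continuity and optionality of $t\mapsto V_{t+}$ for the ladlag process $V$, and the uniqueness of the optional aggregator in Proposition \ref{Property_0}. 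Your route is shorter and makes the theorem independent of the infinitesimal characterization of $V$; what it gives up is Lemma \ref{Property_2} itself, which the paper records as a statement of independent interest (a strong $\mathcal{E}^f$-supermartingale that is right-continuous along stopping times in $\mathcal{E}^f$-expectation is right-continuous).
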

 
The proof of the theorem uses the following preliminary result which states that the strict value process $(V^{+}_t)$ is right-continuous along stopping times in ${\cal E}^{f}$-conditional expectation. 

\begin{Lemma}[Right-continuity along stopping times in $\mathcal{E}^f$-conditional expectation]\label{Property_2}
The strict value process $(V^{+}_t)$ is right-continuous along stopping times in ${\cal E}^{f}$-expectation, in the sense that for each $\theta \in \stopo$, and for each sequence of stopping times $(\theta_n)_{n\in\mathbb{N}}$  belonging to $\stopo$  such that $\theta_n\downarrow \theta$, we have 
\begin{equation}\label{RCEv+}
\lim_{n\to \infty} \uparrow {\cal E}^f_{\theta, \theta_n} (V^{+}_{\theta_n}) = V^{+}_{\theta}\quad {\rm a.s.}
\end{equation}
\end{Lemma}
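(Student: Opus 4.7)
The plan is to establish the equality by proving the two opposite inequalities, exploiting the strong $\mathcal{E}^f$-supermartingale property of $(V^+_t)$ for one direction and a careful approximation of strict stopping times for the other.

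First I would check that the sequence $(\mathcal{E}^f_{\theta,\theta_n}(V^+_{\theta_n}))_n$ is well-defined, non-decreasing, and bounded above by $V^+_\theta$. Monotonicity follows from the strong $\mathcal{E}^f$-supermartingale property of $(V^+_t)$ (Proposition \ref{Property_0}) applied at $\theta_{n+1}\le \theta_n$, i.e.\ $\mathcal{E}^f_{\theta_{n+1},\theta_n}(V^+_{\theta_n})\le V^+_{\theta_{n+1}}$, together with consistency after applying $\mathcal{E}^f_{\theta,\theta_{n+1}}$ on both sides. The upper bound $\mathcal{E}^f_{\theta,\theta_n}(V^+_{\theta_n})\le V^+_\theta$ is the supermartingale inequality itself. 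Denote the increasing limit by $W:=\lim_n\uparrow \mathcal{E}^f_{\theta,\theta_n}(V^+_{\theta_n})$, so that $W\le V^+_\theta$ a.s.

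For the reverse inequality, I would fix an arbitrary $\tau\in\mathcal{T}_{\theta^+}$ and construct, for every $n$, a stopping time $\tau_n\in\mathcal{T}_{\theta_n^+}$ that approximates $\tau$. Specifically, I set
\[
\tau_n \;:=\; \tau\,\mathbf{1}_{\{\tau>\theta_n\}} + T\,\mathbf{1}_{\{\tau\le\theta_n\}}.
\]
The key checks are: (a) $\tau_n$ is a stopping time (since $\{\tau\le t,\tau\le\theta_n\}=\{\tau\wedge t\le \theta_n\wedge t\}\cap\{\tau\le t\}\in\mathcal{F}_t$); (b) on $\{\theta_n<T\}$ one has $\tau_n>\theta_n$ either by construction or because $\tau_n=T>\theta_n$, and on $\{\theta_n=T\}$ one has $\tau_n=T=\theta_n$, so $\tau_n\in\mathcal{T}_{\theta_n^+}$; (c) on $\{\theta<T\}$, since $\tau>\theta$ a.s.\ and $\theta_n\downarrow\theta$, one has $\tau>\theta_n$ for all $n$ large enough (depending on $\omega$), so $\tau_n=\tau$ eventually; on $\{\theta=T\}$, $\tau=T=\tau_n$ trivially. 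Thus $\tau_n\to\tau$ and $\xi_{\tau_n}\to\xi_\tau$ a.s.\ (in fact eventually stationary).

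From the definition of $V^+_{\theta_n}$ as an essential supremum over $\mathcal{T}_{\theta_n^+}$, we have $\mathcal{E}^f_{\theta_n,\tau_n}(\xi_{\tau_n})\le V^+_{\theta_n}$ a.s. Applying the monotone operator $\mathcal{E}^f_{\theta,\theta_n}(\cdot)$ and using consistency ($\theta\le\theta_n\le\tau_n$) yields
\[
\mathcal{E}^f_{\theta,\tau_n}(\xi_{\tau_n}) \;\le\; \mathcal{E}^f_{\theta,\theta_n}(V^+_{\theta_n}) \quad\text{a.s.}
\]
I would then pass to the limit on both sides. The right-hand side tends to $W$ by construction. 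For the left-hand side, I invoke the continuity of BSDE solutions with respect to terminal time and terminal condition (as in Proposition~A.6 of \cite{QuenSul}, which still holds for a general filtration, as used earlier in the paper): the a.s.\ stationary convergence $\tau_n\to\tau$, together with $\xi\in\mathcal{S}^2$ providing the required uniform $L^2$ domination, gives $\mathcal{E}^f_{\theta,\tau_n}(\xi_{\tau_n})\to\mathcal{E}^f_{\theta,\tau}(\xi_\tau)$ a.s. Hence $\mathcal{E}^f_{\theta,\tau}(\xi_\tau)\le W$ a.s.

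Finally, taking the essential supremum over $\tau\in\mathcal{T}_{\theta^+}$ in this last inequality gives $V^+_\theta\le W$ a.s., and combined with $W\le V^+_\theta$ this yields the desired \eqref{RCEv+}. The most delicate step is the construction of $\tau_n$ (one has to ensure it lies strictly above $\theta_n$ while still converging to $\tau$ without disturbing the essential supremum), and the subsequent justification of the BSDE continuity: beyond that, everything reduces to the supermartingale inequality and the flag/consistency machinery for $\mathcal{E}^f$.
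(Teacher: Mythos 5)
Your proof is correct, and its first half (monotonicity of $({\cal E}^f_{\theta,\theta_n}(V^+_{\theta_n}))_n$ via consistency plus the supermartingale inequality, and the upper bound $W\le V^+_\theta$) is exactly the paper's. For the converse inequality you and the paper use the very same key construction --- the truncated stopping time $\tau\,{\bf 1}_{\{\tau>\theta_n\}}+T\,{\bf 1}_{\{\tau\le\theta_n\}}\in\mathcal{T}_{\theta_n^+}$ (the paper's $\overline\theta_n$) and the continuity of ${\cal E}^f$ with respect to terminal time and terminal condition --- but you organize the logic differently: the paper argues by contradiction, introducing an optimizing sequence $(\tau_p)$ for $V^+_\theta$, extracting a single $\tau_{p_0}$ that is $\alpha/2$-optimal on a set of positive probability (via an elementary a.s.-convergence remark), and deriving a contradiction on a further subset $C$; you instead fix an arbitrary $\tau\in\mathcal{T}_{\theta^+}$, pass to the limit in $\mathcal{E}^f_{\theta,\tau_n}(\xi_{\tau_n})\le\mathcal{E}^f_{\theta,\theta_n}(V^+_{\theta_n})$ to get $\mathcal{E}^f_{\theta,\tau}(\xi_\tau)\le W$, and conclude by the defining property of the essential supremum. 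Your version is shorter and avoids the $\alpha$-and-positive-probability-set bookkeeping entirely, and it also handles the cases $\{\theta<T\}$ and $\{\theta=T\}$ uniformly, whereas the paper treats them separately. One small point of care, which applies equally to the paper's own argument: the continuity property (Prop.\ A.6 of \cite{QuenSul}) is stated as convergence in norm, so to pass to the limit in the pointwise inequality one should either extract an a.s.\ convergent subsequence or note that the left-hand side also converges monotonically; since you only need the inequality in the limit, a subsequence suffices and nothing is lost.
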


For the proof, we recall the following classical statement:
\begin{Remark}\label{classique} Let $(\Omega, {\cal F}, P)$ be a probability space. 
 Let $A \in  {\cal F}$. Let $(X_n)$ be a sequence of real valued random variables. Suppose that $(X_n)$ converges a.s. on $A$ to a random variable $X$. 
 Then, for each $\varepsilon >0$, 
 $\lim_{n \to + \infty}P(  \{ \vert X- X_n\vert < \varepsilon \} \cap A )= P(A).$\\
From this property, it follows that for each $\varepsilon >0$,   there exists $n_0 \in\mathbb{N}$ such that for all $n \geq n_0$, 
$P(  \{ \vert X- X_n\vert < \varepsilon \} \cap A ) \geq \frac{P(A)}{2}.$
\end{Remark}
\textbf{Proof of Lemma \ref{Property_2}:}
 Let $n\in\mathbb{N}$. By the consistency property of ${\cal E}^{f}$, we have 
\begin{equation}\label{cons}
{\cal E}^f_{\theta, \theta_n} (V^+_{\theta_n})= {\cal E}^f_{\theta, \theta_{n+1}}
\left( {\cal E}^f_{\theta_{n+1}, \theta_n}
 (V^+_{\theta_n})  \right)\quad {\rm a.s.}
 \end{equation}
Now, since the process $(V^+_t)$ is a strong ${\cal E}^f$- supermartingale, we have 
$ {\cal E}^f_{\theta_{n+1}, \theta_n} (V^+_{\theta_n}) \leq V^+_{\theta_{n+1}}$ a.s.  Using this inequality, together with  equality \eqref{cons}  and 
 the monotonicity of ${\cal E}^f_{\theta, \theta_{n+1}}$, we obtain
$${\cal E}^f_{\theta, \theta_n} (V^+_{\theta_n})\leq  {\cal E}^f_{\theta, \theta_{n+1}}
(V^+_{\theta_{n+1}}) \quad {\rm a.s.}$$
Since this inequality holds for each $n\in\mathbb{N}$, we derive that the sequence of random variables $\left({\cal E}^f_{\theta, \theta_n} (V^+_{\theta_n})\right)_{n\in\mathbb{N}}$ is nondecreasing.  
Moreover, since the process $(V^+_t)$ is a strong ${\cal E}^f$- supermartingale, we have $ {\cal E}^f_{\theta, \theta_n} (V^+_{\theta_n}) \leq V^+_\theta$ a.s. 
for each $n\in\mathbb{N}$. By taking the limit as $n$ tend to $+ \infty$, we thus get
\begin{equation*}
\lim_{n\to \infty} \uparrow {\cal E}^f_{\theta, \theta_n} (V^+_{\theta_n}) \leq V^+_\theta\quad {\rm a.s.}
\end{equation*}
It remains to show the converse inequality:
\begin{equation}\label{autreinegalite}
\lim_{n\to \infty}\uparrow  {\cal E}^f_{\theta, \theta_n} (V^+_{\theta_n}) \geq V^+_\theta\quad {\rm a.s.}
\end{equation}
Suppose, by way of contradiction, that this inequality does not hold. 
Then, there exists a constant $\alpha>0$ such that the event $A$ defined by 
$$A:= \{ \lim_{n\to \infty}\uparrow  {\cal E}^f_{\theta, \theta_n} (V^+_{\theta_n}) \leq V^+_\theta - \alpha \}$$
satisfies $P(A) >0$. By definition of $A$, we have 
\begin{equation}\label{rose}
\lim_{n\to \infty}\uparrow  {\cal E}^f_{\theta, \theta_n} (V^+_{\theta_n}) +  \alpha \leq V^+_\theta \quad {\rm a.s.}\,\, {\rm on}\,\, A.
\end{equation}
As for the value function, there exists an optimizing sequence $(\tau_p)_{p\in\mathbb{N}}$ 
 for the strict value function $V^+_\theta$, that is, such that, for each $p\in\mathbb{N}$,  $\tau_p \in 
 {\cal T}_{\theta^+}$, and such that 
 $$V^+_\theta= \lim_{p\to \infty}\uparrow {\cal E}^f_{\theta, \tau_p} (\xi_{\tau_p  })
 \quad {\rm a.s.}$$
 By Remark \ref{classique} (applied with $\varepsilon = \frac{\alpha}{2}$), we derive that there exists $p_0 \in\mathbb{N}$ such that the event $B$ defined by 
 $$B:= \{ V^+_\theta \leq  {\cal E}^f_{\theta, \tau_{p_0}} (\xi_{\tau_{p_0}  }) +\frac{\alpha}{2} \} \cap A$$
satisfies $P(B) \geq \frac{P(A)}{2}$. Denoting $\tau_{p_0}$ by $\theta'$, we have 
$$V^+_\theta \leq  {\cal E}^f_{\theta, \theta'} (\xi_{\theta'  }) +\frac{\alpha}{2}
\quad {\rm a.s.}\,\, {\rm on}\,\, B.$$

By the inequality \eqref{rose}, we derive that
\begin{equation}\label{rose2}
\lim_{n\to \infty}\uparrow  {\cal E}^f_{\theta, \theta_n} (V^+_{\theta_n}) +\frac{\alpha}{2} \leq  {\cal E}^f_{\theta, \theta'} (\xi_{\theta'  }) \quad {\rm a.s.}\,\, {\rm on}\,\, B.
\end{equation}

Let us first consider the simpler case where $\theta<T$ a.s.\\  In this case,  since $\theta'\in \mathcal{T}_{\theta^+}$, we have $\theta'>\theta$ a.s.
Hence, we have $\Omega= \displaystyle{\cup_{n\in \mathbb{N}}} \uparrow \{\theta'>\theta_n\}$ a.s.\\
Define the stopping time  $\overline \theta_n := \theta'{\bf 1}_{\{\theta'>\theta_{n}\}}+T {\bf 1}_{\{\theta'\leq \theta_{n}\}}. $
We  note that $\overline \theta_n\in\T_{\theta_{n}^{\;\:+}}$   for each $n$ $\in\mathbb{N}$. Moreover, $\lim_{n\to \infty} \overline \theta_n = \theta'$ a.s. and 
 $\lim_{n\to \infty}\xi_{ \overline \theta_n} = \xi_{\theta'}$ a.s.
 By the continuity property of $ {\cal E}^f$ with respect to terminal condition and 
 terminal time, we get
 $$\lim_{n\to \infty} {\cal E}^f_{\theta, \overline \theta_n}(\xi_{ \overline \theta_n})
 =  {\cal E}^f_{\theta, \theta'} (\xi_{\theta'  }) \quad {\rm a.s.}$$
 By Remark \ref{classique}, we derive that there exists 
  $n_0 \in\mathbb{N}$ such that the event $C$ defined by 
 $$C:= \{ \vert {\cal E}^f_{\theta, \theta'} (\xi_{\theta'  })  -  {\cal E}^f_{\theta,  \overline \theta_{n_0}} (\xi_{\overline \theta_{n_0}  }) \vert \leq \frac{\alpha}{4} \} \cap B$$
satisfies $P(C)>0$. By the inequality \eqref{rose2}, we derive that 
\begin{equation}\label{eqC}
\lim_{n\to \infty}\uparrow  {\cal E}^f_{\theta, \theta_n} (V^+_{\theta_n}) +\frac{\alpha}{4} \leq   {\cal E}^f_{\theta,  \overline \theta_{n_0}} (\xi_{\overline \theta_{n_0}  }) \quad {\rm a.s.}\,\, {\rm on}\,\, C.
\end{equation}
Now, by the consistency of $ {\cal E}^f$, we have
$$ {\cal E}^f_{\theta,  \overline \theta_{n_0}} (\xi_{\overline \theta_{n_0}  }) = 
{\cal E}^f_{\theta,   \theta_{n_0}}\left( {\cal E}^f_{\theta_{n_0},  \overline \theta_{n_0}}
 (\xi_{\overline \theta_{n_0}  })  \right)
 \leq {\cal E}^f_{\theta,   \theta_{n_0}} ( V^+_{\theta_{n_0}} ) \quad {\rm a.s.,}$$
where the last inequality follows from the fact that $\overline \theta_{n_0}\in \T_{\theta_{n_0}^{\;\;\;+}}$  and from the definition of $V^+_{\theta_{n_0}}$.
By \eqref{eqC}, we thus derive that 
$$\lim_{n\to \infty}\uparrow  {\cal E}^f_{\theta, \theta_n} (V^+_{\theta_n}) +\frac{\alpha}{4} \leq   {\cal E}^f_{\theta,   \theta_{n_0}} ( V^+_{\theta_{n_0}} ) \quad {\rm a.s.}\,\, {\rm on}\,\, C,$$
 which gives a contradiction. Hence, the desired inequality \eqref{autreinegalite}  holds. 

Let us now consider a general  $\theta\in \stopo$.\\ 
On the set $\{\theta = T\}$,  we have $\theta_n =\theta$ a.s. for all $n$. Hence,
 on $\{\theta = T\}$, we have 
$\lim_{n\to \infty}{\cal E}^f_{\theta, \theta_n} (V^+_{\theta_n}) = V^+_\theta\; {\rm a.s.}$ 
On the set $\{\theta < T\}$, using  the same arguments as above with 
$\overline \theta_n = \theta'{\bf 1}_{\{\theta'>\theta_{n}\}\cap \{T>\theta\}}+T {\bf 1}_{\{\theta'\leq \theta_{n}\}
 \cup \{T=\theta\}}$, we show the inequality \eqref{autreinegalite}.
 The proof is thus complete.
 \fproof
 
We are now ready to prove the theorem.\\
\textbf{Proof of Theorem \ref{thm_value_nu}:}
The proof of (i) is based on the previous Lemma \ref{Property_2} and on a result from the general theory of processes. Let $S\in\stopo$ and let  $(S_n)$ be a non-increasing sequence of stopping times in $\mathcal{T}_{S +}$ with $\lim \downarrow S_n=S$ a.s. By applying Lemma \ref{Property_2} and the continuity property of $\mathcal{E}^f$-expectations 
with respect to the terminal condition and to the terminal time, 
 we get 
$$V^{+}_S=\lim_{n\to\infty} \mathcal{E}^f_{S,S_n}(V^{+}_{S_n})=\mathcal{E}^f_{S,S}(\lim_{n\to\infty} V^{+}_{S_n})=\lim_{n\to\infty} V^{+}_{S_n},$$
where we have used that $\lim_{n\to\infty} V^{+}_{S_n}$ exists, as $(V^{+}_t)$ is a strong $\mathcal{E}^f$-supermartingale, and hence has right limits.    
The above equality shows that the process $(V^{+}_t)$ is right-continuous along stopping times. By  Proposition 2 in \cite{DelLen2}, we conclude that  $(V^{+}_t)$ is right-continuous.     \\
We now show (ii).   
Let $S\in\stopo$. Let $(S_n)$ be a  non-increasing sequence of stopping times in $\mathcal{T}_{S^+}$ with $\lim \downarrow S_n=S$ a.s. We know that $V_\tau\geq V^{+}_\tau$ a.s., for all $\tau\in\stopo$.
Hence, $V_{S_n}\geq V^{+}_{S_n}$ a.s., for all $n$. We derive that $\lim_{n\to\infty} V_{S_n} \geq \lim_{n\to\infty} V^{+}_{S_n}$ a.s.  Using this and the right-continuity  of $V^{+}$ established in (i), gives
 $V_{S+}\geq V^{+}_S$ a.s. In order to show the converse inequality, we first show  
 \begin{equation}\label{eq_0_thm_strict_nu}
 \mathcal{E}^f_{S,S^n}(V_{S_n})\leq V_S^{+} \text{ a.s. for all } n.  
 \end{equation}  
 We fix $n$ and we take $(\tau^p)\in\mathcal{T}_{S_n}$  an optimizing sequence for the problem with value $V_{S_n}$, i.e.
 $V_{S_n}=\lim_{p\to\infty}  \mathcal{E}^{f}_{S_n,\tau_p}(\xi_{\tau_p}).$   We have 
 \begin{equation}\label{eq1_thm_strict_nu}
 \mathcal{E}^f_{S,S_n}(V_{S_n})=\mathcal{E}^f_{S,S_n}(\lim_{p\to\infty}  \mathcal{E}^{f}_{S_n,\tau_p}(\xi_{\tau_p}))=\lim_{p\to\infty} \mathcal{E}^f_{S,S_n}(  \mathcal{E}^{f}_{S_n,\tau_p}(\xi_{\tau_p})) \text{ a.s.},
 \end{equation} where we have used    the continuity property of $\mathcal{E}^f_{S,S^n}(\cdot)$ with respect to the terminal condition (recall that here $n$ is fixed). 
 Using the consistency property of $\mathcal{E}^f$-expectations, we get $\mathcal{E}^f_{S,S_n}(  \mathcal{E}^{f}_{S_n,\tau_p}(\xi_{\tau_p}))= \mathcal{E}^{f}_{S,\tau_p}(\xi_{\tau_p})\leq V^{+}_S$ a.s. (where for the  inequality we have used that $\tau_p\in\mathcal{T}_{S^+}$). From this, together with equation \eqref{eq1_thm_strict_nu}, we derive the desired inequality \eqref{eq_0_thm_strict_nu}. 
From inequality \eqref{eq_0_thm_strict_nu}, together with the continuity of $\mathcal{E}^f$-expectations with respect to the terminal time and the terminal condition, we derive 
$ V^{+}_S\geq \lim_{n\to\infty}\mathcal{E}^f_{S,S^n}(V_{S_n})=\mathcal{E}^f_{S,S}(V_{S+})=V_{S+}  $ a.s. Hence, $V^{+}_S\geq V_{S+}$
 a.s., which, together with the previously shown converse inequality, proves the equality $V_{S+} = V^{+}_S$ a.s.\\ 
Statement (iii) is a direct consequence of part (ii) (which we have just shown), together with Remark \ref{Rmk_the_jumps_of_C} and Theorem \ref{caranonlinear}.  


\fproof
  
\begin{Remark} By the same arguments as those of the proof of statement (i) in the above Theorem \ref{thm_value_nu}, the following  general statement can be shown:
A strong $\mathcal{E}^f$-supermartingale is right-continuous if it is right-continuous along stopping times in $\mathcal{E}^f$-conditional expectation.
\end{Remark}

\section{Aclnowledgements} The authors are very grateful to Klébert Kentia for his helpful remarks.
The authors are also indebted  to Sigurd Assing for his helpful comments, and to Marek Rutkowski and Tianyang Nie for useful discussions.


\begin{thebibliography}{ab}

\bibitem{Bayraktar} Bayraktar E. and S. Yao (2011): Optimal stopping for Non-linear Expectations, 
 {\em Stochastic Processes and Their Applications} 121 (2), 185-211 and 212-264.


\bibitem{BBP}  Barles G., R. Buckdahn and E. Pardoux (1997): Backward Stochastic Differential Equations and integral-partial differential equations,   
 {\em Stochastics and Stochastics Reports}, 60,  
57-83.

\bibitem{Bayraktar-2}  
Bayraktar E., I. Karatzas,  and S. Yao (2010): Optimal Stopping for Dynamic Convex Risk Measures,  {\em Illinois Journal of Mathematics }, 54 (3), 1025-1067.



 
%
\bibitem{Bouchard} 	   
	Bouchard B., D. Possamaï, and  X. Tan (2016): A general Doob-Meyer-Mertens decomposition for $g$-supermartingale system,  \textit{Electronic Journal of Probability} 
    21, paper no. 36, 21 pages. 

\bibitem{CM} Cr\'epey S. and A. Matoussi (2008): Reflected and doubly reflected BSDEs with jumps: a priori estimates and comparison, {\em Annals of Applied Probability},  18(5), 2041-2069.






\bibitem{DelLen2} Dellacherie C. and E. Lenglart (1981): Sur des problèmes de régularisation, de recollement et d'interpolation en théorie des processus, \textit{Sém. de Proba. XVI}, lect. notes in Mathematics, 920, 298-313, Springer-Verlag.
 
\bibitem{DM1}
Dellacherie C. and  P.-A. Meyer (1975):
 \textit{Probabilit\'e et Potentiel, Chap. I-IV}. Nouvelle \'edition. Hermann. 

\bibitem{DM2}
Dellacherie C. and  P.-A. Meyer (1980):
 \textit{Probabilit\'es et Potentiel, Th\'eorie des Martingales, Chap. V-VIII}. Nouvelle \'edition. Hermann.
 

\bibitem{DQS2}  
%
Dumitrescu R., M.-C. Quenez,  and A. Sulem (2016), Generalized Dynkin Games and Doubly reflected BSDEs with jumps, {\em Electronic Journal of Probability} Vol. 21, paper no. 64, 32 pages.


\bibitem{DQS4} 
Dumitrescu, R., Quenez M.C., and A. Sulem (2016), Game options in an imperfect market with default, {\em SIAM
Journal on Financial Mathematics}, arXiv:1511.09041.

%

\bibitem{DQS5} Dumitrescu, R., Quenez M.C., and A. Sulem (2017),  Mixed generalized Dynkin game and stochastic control in a Markovian framework, \emph{Stochastics},
89(1), 400-429. 



\bibitem{EK}
El Karoui N. (1981):
Les aspects probabilistes du contr\^ole stochastique. \textit{\'Ecole d'\'et\'e de Probabilit\'es de Saint-Flour IX-1979 Lect. Notes in Math.}
\textbf{876}, 73-238.
{\bf MR}{0637469 }

%



\bibitem{ElKaroui97}
El Karoui N., Kapoudjian C.,
 Pardoux E., Peng S. and M.-C. Quenez (1997):
Reflected solutions of Backward SDE's and related obstacle problems 
for PDE's,
{\em The Annals of Probability}, 25(2), 702-737.


 
\bibitem{EQ96}
El Karoui N. and M.-C. Quenez (1997): 
Non-linear Pricing Theory and Backward Stochastic Differential 
Equations,  
Lect. Notes in Mathematics 1656, 
Ed. W. Runggaldier, Springer.

  

\bibitem{Essaky} Essaky H. (2008): Reflected backward stochastic differential equation with jumps and RCLL obstacle. Bulletin des Sciences Math\'ematiques  132, 690-710.



\bibitem{Galchouk} 
Gal'chouk L. I. (1981)~: Optional martingales,
{\em Math. USSR Sbornik} 40(4), 435-468.

\bibitem{MG} Grigorova, M., Imkeller, P., Offen, E., Ouknine, Y., Quenez, M.-C., Reflected BSDEs when the obstacle is not right-continuous and optimal stopping, (2017), \textit{the Annals of Applied Probability}.


\bibitem{Imkeller2} Grigorova M., P. Imkeller, Y. Ouknine, and  M.-C. Quenez (2016): 
Doubly
Reflected BSDEs and $\mathcal{E}^f$-Dynkin games: beyond the right-continuous case, https://arxiv.org/abs/1704.00625. 

\bibitem{MG2} Grigorova M. and Quenez, M.-C. (2017):  Optimal stopping and a non-zero-sum Dynkin game in discrete time with risk measures induced by BSDEs, to appear in {\em Stochastics}, 89(1), Festschrift for Bernt Øksendal,  259-279. 

\bibitem{H}  Hamad\`ene, S., Mixed zero-sum stochastic differential game and American game options, {\em SIAM J. Control Optim.}, 45(2), 
(2006), 496-518.

\bibitem{Ham} Hamadène S. (2002) Reflected BSDE's with discontinuous barrier and application, \textit{Stochastics and Stochastic Reports} 74(3-4), 571-596.  

\bibitem{HO1}  Hamad\`ene S.  and Y. Ouknine (2003): Backward stochastic differential equations with jumps and random obstacle, {\em Electronic Journal of  Probability} 8, 1-20.

\bibitem{HO2}  Hamad\`ene S.  and Y. Ouknine (2015): Reflected backward SDEs with general jumps, {\em Teor. Veroyatnost. i Primenen.} , 60(2),  357-376. 

\bibitem{J}   Jacod J. (1979):
 \textit{Calcul Stochastique et Probl\`emes de martingales}, Springer.
 
\bibitem{JS} Jacod J. and A. N. Shiryaev. {\em Limit theorems for stochastic processes}, volume 288 of Grundlehren der Mathematischen Wissenschaften [Fundamental Principles of Mathematical Sciences]. Springer-Verlag, Berlin, second edition, 2003.


%

\bibitem{KS2} 
Karatzas I. and S. E. Shreve (1998):  Methods of mathematical finance, Applications of Mathematics (New York), 39, Springer, New York.  

\bibitem{nouveau}  Klimsiak T.,  M. Rzymowski, and  L. S\l{}omi\'nski (2016): Reflected BSDEs with regulated trajectories, available at https://arxiv.org/pdf/1608.08926v1.pdf, preprint.





\bibitem{Kob} Kobylanski M. and M.-C. Quenez (2012): 
Optimal stopping time problem in a general framework, 
\textit{Electronic Journal of Probability}  17, 1-28.

\bibitem{ERRATUM} Kobylanski M. and M.-C. Quenez (2016): 
Erratum: Optimal stopping time problem in a general framework, available at https://hal.archives-ouvertes.fr/hal-01328196. 

\bibitem{Marc} Kobylanski M., M.-C. Quenez,  and M. Roger de Campagnolle (2014): 
  



\bibitem{KQR}
Kobylanski M., M.-C. Quenez\ and\ E. Rouy-Mironescu (2011): Optimal multiple stopping time problem, \textit{Ann. Appl. Probab.}  21(4), 1365-1399. 







\bibitem{Lenglart}
Lenglart E. (1980): Tribus de Meyer et théorie des processus, Séminaire de probabilités de Strasbourg XIV 1978/79, 
Lecture Notes in Mathematics Vol. 784, 500-546.  

\bibitem{Maingueneau}  
Maingueneau M. A. (1977): Temps d'arr\^et optimaux et th\'eorie g\'en\'erale, S\'eminaire de Probabilit\'es, XII de Strasbourg, 1976/77, 457--467, Lecture Notes in Math., 649 Springer, 
Berlin. 


\bibitem{Neveu}   Neveu J. (1972):   \textit{Martingales à Temps Discret}, Masson, Paris.


\bibitem{ouk98}
Ouknine Y. (1998)~: Reflected backward stochastic differential 
equation with jumps, {\em Stochastics and Stoch. Reports} 65, 111-125.





\bibitem{Pe04}  Peng S. (2004): 
Nonlinear expectations, nonlinear evaluations and risk measures, 
165-253, {\em Lecture Notes in Math.}, 1856, Springer, Berlin.


 \bibitem{Protter}  Protter P.E. (2005):  Stochastic Integration and Differential Equations (Stochastic Modelling and Applied Probability), 2nd edition, Springer Verlag.  
 
\bibitem{QuenSul} Quenez M-C. and A. Sulem  (2013): BSDEs with jumps, optimization and applications 
to dynamic risk measures. {\em Stochastic Processes and Their Applications} 123, 0-29.

\bibitem{QuenSul2} Quenez M.-C. and A. Sulem (2014): Reflected BSDEs and robust optimal stopping for dynamic risk measures with jumps, \textit{Stochastic Processes and their Applications} 124(9), 3031-3054.

\bibitem{Gianin} Rosazza-Gianin E. (2006): Risk measures via g-expectations, \textit{Insurance: Mathematics and Economics} 39(1),  19-34.
%
%

\bibitem{R} Royer M.  (2006):  Backward stochastic differential equations  with jumps and related non-linear expectations, {\em Stochastic Processes and Their Applications} 116, 1358-1376.







\end{thebibliography}
\end{document}